\DeclareFontFamily{OT1}{pzc}{}
  \DeclareFontShape{OT1}{pzc}{m}{it}{<-> s * [1.200] pzcmi7t}{}
  \DeclareMathAlphabet{\mathpzc}{OT1}{pzc}{m}{it}
\newcommand{\Z}{\mathbb{Z}}
\newcommand{\Q}{\mathbb{Q}}
\newcommand{\R}{\mathbb{R}}
\newcommand{\F}{\mathcal{F}}
\renewcommand{\L}{\mathcal{L}}
\newcommand{\E}{\mathcal{E}}
\newcommand{\M}{\mathcal{M}}
\newcommand{\D}{\mathcal{D}}
\newcommand{\G}{\mathbb{G}}
\newcommand{\X}{\mathcal{X}}
\newcommand{\Y}{\mathcal{Y}}
\newcommand{\K}{\mathcal{K}}
\newcommand{\Gg}{\mathcal{G}}
\renewcommand{\O}{\mathcal{O}}
\renewcommand{\M}{\mathcal{M}}
\newcommand{\rig}{\mathbf{r}}
\newcommand{\Cc}{\mathscr{C}}
\newcommand{\dotr}[1]{#1^{\bullet}}
\DeclareFontFamily{U}{wncy}{}
    \DeclareFontShape{U}{wncy}{m}{n}{<->wncyr10}{}
    \DeclareSymbolFont{mcy}{U}{wncy}{m}{n}
    \DeclareMathSymbol{\Sha}{\mathord}{mcy}{"58} 
\newcommand{\Spec}{\ensuremath{\operatorname{Spec}}}
\newcommand{\Aut}{\ensuremath{\operatorname{Aut}}}
\newcommand{\Coh}{\ensuremath{\operatorname{Coh}}}
\newcommand{\Hom}{\ensuremath{\operatorname{Hom}}}
\newcommand{\Ext}{\ensuremath{\operatorname{Ext}}}
\DeclareMathOperator{\Mod}{Mod}
\DeclareMathOperator{\Br}{Br}
\DeclareMathOperator{\CH}{CH}
\newcommand{\Pic}{\mathbf{Pic}}
\newcommand{\et}{\ensuremath{\operatorname{\acute{e}t}}}
\DeclareMathOperator{\QCoh}{QCoh}
\newcommand{\gm}{\mathbb{G}_m}
\newcommand{\bgm}{B\mathbb{G}_m}
\renewcommand{\epsilon}{\varepsilon}
\newtheorem{thm}{Theorem}[section]
\newtheorem{prop}[thm]{Proposition}
\newtheorem{lem}[thm]{Lemma}
\newtheorem{defn}{Definition}
\newtheorem{cor}[thm]{Corollary}
\newtheorem{rem}{Remark}
  \let\oldrem\rem
  \renewcommand{\rem}{\oldrem\normalfont}
\newtheorem{ex}[thm]{Example}
  \let\oldex\ex
  \renewcommand{\ex}{\oldex\normalfont}
\theoremstyle{proof}
\theoremstyle{definition}
\newtheorem{notation}[thm]{Notation}
\title{Derived equivalences of gerbey curves}
\author{Soumya Sankar and Libby Taylor}
\date{ }
\begin{document}

\maketitle

\begin{abstract}
    We study derived equivalences of certain stacks over genus $1$ curves, which arise as connected components of the Picard stack of a genus $1$ curve.  To this end, we develop a theory of integral transforms for these algebraic stacks.  We use this theory to answer the question of when two gerbey genus $1$ curves are derived equivalent. We use integral transforms and intersection theory on stacks to answer the following questions: if $C'=Pic^d(C)$, is $C=Pic^f(C')$ for some integer $f$?  If $C'=Pic^d(C)$ and $C''=Pic^f(C')$, then is $C''=Pic^g(C)$ for some integer $g$? 
\end{abstract}

\section{Introduction}

The derived category of a variety is a well-studied invariant. Two projective varieties $X$ and $Y$ over a field $k$ are said to be derived equivalent if there is an equivalence $D(X) \cong D(Y)$ of $k$-linear triangulated categories.  Finding non-isomorphic derived equivalent varieties has become an important problem in algebraic geometry.  If $\K \in D(X \times Y)$ is a complex on $X \times Y$, then we say that the \emph{integral transform with kernel $\K$} is the morphism
\[
\Phi^\K:D(X) \to D(Y)
\]
defined by
\[
\E \mapsto R\pi_{Y*}(L\pi_X^*(\E) \otimes \K).
\]
Orlov's representability theorem in \cite{orlov} proves that any fully faithful exact functor $\Psi: D(X) \to D(Y)$ is an integral functor, and that the kernel is uniquely determined as an element of the derived category by $\Psi$.
For an overview, see \cite{greenbook}.

Bondal and Orlov show in \cite{bondalorlov} that for a variety $X$, if $\omega_X$ is ample or anti-ample, then one can recover $X$ from $D(X)$.  This implies that when $X$ is either Fano or of general type, $X$ has no non-isomorphic derived equivalent varieties. Furthermore, any two derived equivalent varieties have the same dimension.  Therefore, in the case of curves, the only nontrivial case to study is that of derived equivalences of genus $1$ curves. Antieau, Krashen and Ward study in \cite{AKW} the question of when two genus $1$ curves are derived equivalent. In particular, they prove the following theorem.

\begin{thm}[\cite{AKW}, Lemma 2.3.]
\label{lem:akw}
Let $C$ and $C'$ be genus $1$ curves over any field $k$ such that $D(C) \cong D(C')$.  Then $C'$ is the moduli space of degree $d$ line bundles on $C$ for some integer $d$, and $C \times C'$ admits a universal sheaf.
\end{thm}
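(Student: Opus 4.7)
The plan is to apply Orlov's representability theorem to realize the equivalence $D(C) \cong D(C')$ as a Fourier--Mukai transform with kernel $\K \in D(C \times C')$, and then to recognize $\K$ itself, on a suitable slice, as a family of stable sheaves on $C$ parameterized by $C'$; such a family is automatically classified by a component of $\Pic(C)$.

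First I would use Orlov's theorem to produce a kernel $\K$ such that the given equivalence is $\Phi^\K \colon D(C) \to D(C')$, unique up to quasi-isomorphism. For each closed point $c' \in C'$ I would then examine the derived restriction $\K_{c'} := Li_{c'}^*\K \in D(C_{\kappa(c')})$. The inverse equivalence is itself a Fourier--Mukai transform, and since $\omega_C \cong \O_C$ on a genus $1$ curve no awkward canonical twists appear in the adjoint kernel; thus $\K_{c'}$ (up to a shift) is precisely the image under $(\Phi^\K)^{-1}$ of the skyscraper sheaf $k(c')$. Because point objects on a smooth projective curve with trivial canonical bundle are shifts of simple coherent sheaves (any object on a curve splits as the sum of its shifted cohomology sheaves, and the condition $\Hom(P,P) = \kappa$ forces concentration in a single degree and simplicity), after a single global homological shift each $\K_{c'}$ is a coherent sheaf, and the pairwise orthogonality of the skyscrapers on $C'$ transports to the vanishing
\[
\Ext^i(\K_{c'_1}, \K_{c'_2}) = 0 \text{ for } c'_1 \neq c'_2, \qquad \Hom(\K_{c'}, \K_{c'}) = \kappa(c').
\]

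Next I would apply the classification of simple coherent sheaves on a smooth projective genus $1$ curve: any such sheaf is either a skyscraper or a stable vector bundle of rank $r$ and degree $d$ with $\gcd(r,d) = 1$. By connectedness of $C'$ the numerical type $(r,d)$ is constant across the family $\{\K_{c'}\}$. The theorem of Atiyah, extended by Tu and others to arbitrary base fields, identifies the moduli space $M_C(r,d)$ with some component $\Pic^{d_0}(C)$ of the Picard scheme, for an integer $d_0$ determined by $(r,d)$ together with the torsor structure of $\Pic(C)$ over $\Jac(C)$. The family $\K$ therefore induces a classifying morphism $\phi \colon C' \to \Pic^{d_0}(C)$.

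The hard part will be to show that $\phi$ is an isomorphism rather than merely a morphism between smooth proper curves of genus $1$. I would combine the two orthogonality relations above: $\Hom(\K_{c'_1}, \K_{c'_2}) = 0$ for $c'_1 \neq c'_2$ forces $\phi$ to be injective on closed points, and the point-object property $\Ext^1(\K_{c'}, \K_{c'}) \cong \kappa(c')$ identifies the Kodaira--Spencer map with an isomorphism on tangent spaces, so $\phi$ is unramified. Combined with properness of $C'$, this gives a closed immersion, and between smooth connected curves of the same dimension a non-constant closed immersion is an isomorphism, yielding $C' \cong \Pic^d(C)$ with $d = d_0$. The kernel $\K$ itself, transported to $C \times \Pic^d(C)$, then serves as the universal sheaf, proving the second claim. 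The main technical obstacle is executing this over a general field $k$: Orlov's theorem, the point-object analysis, and the Atiyah--Tu identification $M_C(r,d) \cong \Pic^{d_0}(C)$ all need care when $k$ is not algebraically closed and when $C$ may have no rational point, so residue fields at closed points and Brauer-class obstructions must be tracked throughout.
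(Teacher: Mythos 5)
The paper offers no proof of this statement --- it is quoted directly from Antieau--Krashen--Ward --- so the comparison is against the expected argument rather than an in-paper one. Your sketch follows the standard route (Orlov representability, point objects are shifted simple sheaves on a curve, classification of simple sheaves on a genus $1$ curve, Atiyah/Tu to identify the moduli space with a component of $\Pic$, then orthogonality plus a tangent-space computation to upgrade the classifying map to an isomorphism), and this is structurally the right proof. Three spots need repair. First, the inverse of $\Phi^{\K}$ has kernel $\K^\vee\otimes\pi_C^*\omega_C[1]=\K^\vee[1]$, not $\K$ up to shift; triviality of $\omega_C$ removes the twist but not the dual, so $(\Phi^{\K})^{-1}(k(c'))\cong (\K_{c'})^\vee[1]$ rather than $\K_{c'}$. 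The argument survives because the derived dual of a shifted simple sheaf on a smooth curve is again a shifted simple sheaf and orthogonality is preserved under dualizing, but as written the identification is incorrect and would misnormalize which $\Pic^d$ you land in (harmless here since $d$ is unspecified). Second, before speaking of ``the family $\{\K_{c'}\}$'' and its classifying morphism you must know that $\K$, after a single global shift, is an honest coherent sheaf on $C\times C'$ flat over $C'$; this follows from the fiberwise statement by Bridgeland's flatness lemma together with local constancy of the shift, but it is a necessary step, not a formality. Third, $\dim\Ext^1(\K_{c'},\K_{c'})=1$ only says the moduli space has one-dimensional tangent space at $[\K_{c'}]$; unramifiedness of $\phi$ additionally requires the compatibility of the Kodaira--Spencer map of the family with the isomorphism $\Ext^1_{C'}(k(c'),k(c'))\to\Ext^1_{C}(\K_{c'},\K_{c'})$ induced by the equivalence, which you assert rather than prove. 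The non-closed-field issues you defer at the end are, in fact, the real content of the cited lemma (it is why AKW prove it instead of quoting Bridgeland), though your proposed strategy --- base change to $\overline{k}$, Galois descent of the determinant isomorphism, careful tracking of residue fields --- is the correct one.
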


Thus a necessary condition for the derived equivalence of two curves is the existence of a universal sheaf. It turns out that the existence of such a sheaf is also sufficient, and can be phrased via a cohomological criterion.

\begin{thm}[\cite{AKW}, Theorem 2.5]
\label{thm:akw}
Let $E$ be an elliptic curve, and let $C$ and $C'$ be principal homogeneous spaces for it. Then $D(C) \cong D(C')$ if and only if there exists an automorphism $\phi\in \Aut_k(E)$ and an integer $a$ coprime to the order of $[C']\in H^1_{\et}(\Spec k,E)$ such that $[C]=\phi_*(a[C'])$.
\end{thm}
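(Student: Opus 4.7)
The plan is to invoke Theorem~\ref{lem:akw}, which reduces the derived equivalence $D(C) \cong D(C')$ to the existence of an integer $d$ with $C' \cong \Pic^d(C)$ together with a universal sheaf on $C \times C'$, and then to translate both conditions into the cohomological language of $H^1_{\et}(\Spec k, E)$.

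For the forward direction, I would apply Theorem~\ref{lem:akw} to extract $d$ with $C' \cong \Pic^d(C)$ together with a universal sheaf on $C \times C'$. Under the standard bijection between isomorphism classes of $E$-torsors and $H^1_{\et}(\Spec k, E)$, the class $[\Pic^d(C)]$ equals $d[C]$, so $[C'] = d[C]$. The key step is then to argue that the existence of a universal sheaf on $C \times \Pic^d(C)$ forces $\gcd(d, n) = 1$, where $n := \ord([C])$: the obstruction to fineness of the moduli space $\Pic^d(C)$ is a Brauer class whose order is controlled by $\gcd(d, n)$. With this coprimality, multiplication by $d$ preserves the order of $[C]$ in $H^1_{\et}(\Spec k, E)$, so $\ord([C']) = n$. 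Choosing $a$ with $ad \equiv 1 \pmod n$ and $\phi = \mathrm{id}$ then gives $\phi_*(a[C']) = ad[C] = [C]$.

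For the reverse direction, suppose $[C] = \phi_*(a[C'])$ with $\gcd(a, \ord([C'])) = 1$. Since $\phi$ is a $k$-automorphism of $E$, it induces a $k$-isomorphism between the torsor $\phi_*(a[C'])$ and $a[C']$; in particular $D(\phi_*(a[C'])) \cong D(a[C'])$, so I may reduce to $\phi = \mathrm{id}$. Then $[C] = a[C']$, i.e., $C \cong \Pic^a(C')$ as $E$-torsors. Since $\gcd(a, \ord([C'])) = 1$, the Brauer obstruction to the existence of a universal sheaf on $C' \times \Pic^a(C')$ vanishes, producing such a sheaf; feeding it back into Theorem~\ref{lem:akw} (or equivalently, forming the Fourier--Mukai transform with this sheaf as kernel) yields $D(C') \cong D(C)$.

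The main obstacle is the period-index step in both directions: showing that the obstruction to the existence of a universal sheaf on $C \times \Pic^d(C)$ is a Brauer class whose triviality is equivalent to $\gcd(d, \ord([C])) = 1$. This requires a careful description of the gerbe structure on $\Pic^d(C)$ and its interaction with the arithmetic of $H^1_{\et}(\Spec k, E)$. The remaining steps---the identification $[\Pic^d(C)] = d[C]$, the reduction via $\phi$, and the group-theoretic manipulations with $a$, $d$, and $n$---are essentially bookkeeping with the torsor structure.
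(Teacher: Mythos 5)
The paper does not prove this statement: it is quoted verbatim from [AKW, Theorem 2.5] as background, so the only proof to compare against is the one in that reference. Your outline follows the same broad strategy as [AKW] --- reduce via Theorem~\ref{lem:akw} to the assertion that $C'$ is some $\mathrm{Pic}^d(C)$ carrying a universal sheaf, then convert that to arithmetic in $H^1_{\et}(\Spec k, E)$ --- but it leaves the actual content of the theorem unproved. The claim that the Brauer obstruction to a universal sheaf on $C \times \mathrm{Pic}^d(C)$ is ``controlled by $\gcd(d,n)$'' and vanishes precisely when $\gcd(d,\ord([C]))=1$ is the heart of the matter: in [AKW] this is the bulk of Section 2, carried out by analyzing the $\gm$-gerbe $\Pic^d(C)\to \mathrm{Pic}^d(C)$ and the resulting class in $\Br(k)$, and you flag it yourself as ``the main obstacle'' without supplying an argument. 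As written, the proposal reduces the theorem to an unproved lemma that is essentially equivalent to the theorem, so there is a genuine gap rather than a complete alternative proof.

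Two further points. First, Theorem~\ref{lem:akw} produces an isomorphism $C'\cong \mathrm{Pic}^d(C)$ of \emph{varieties}, not of $E$-torsors; a variety isomorphism between torsors only identifies their classes in $H^1_{\et}(\Spec k,E)$ up to the action of $\Aut_k(E)$. This is exactly why $\phi$ appears in the statement, and your forward direction, which writes $[C']=d[C]$ on the nose and then takes $\phi=\mathrm{id}$, silently asserts a stronger statement than the theorem; the correct conclusion is $[C']=\psi_*(d[C])$ for some $\psi\in\Aut_k(E)$, and $\phi$ must absorb $\psi^{-1}$. Second, in the converse direction you pass from ``a universal sheaf exists on $C'\times \mathrm{Pic}^a(C')$'' to ``$D(C)\cong D(C')$'' by ``feeding it back into Theorem~\ref{lem:akw},'' but that theorem only runs in the opposite direction. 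One must verify that the universal sheaf is strongly simple (or otherwise check that the Fourier--Mukai functor it defines is fully faithful and then an equivalence via triviality of the canonical bundles); this is the scheme-level analogue of what the present paper does for gerbes in Theorem~\ref{thm:gerbeygenus1}, and it needs to be said rather than implied.
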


The cohomological criterion gives the existence of a universal sheaf on $C \times C'$, which in turn can be used to construct the derived equivalence. In the event that $C \times C'$ does not admit a universal sheaf (see Example \ref{example:nouniversalsheaf}), one has to look for derived equivalences at the level of stacks instead. We extend Theorems \ref{lem:akw} and \ref{thm:akw} to this case.

\begin{thm}\label{thm:explicitdim1}
Let $\Cc$ be a $\gm$-gerbe over a genus $1$ curve defined over a field $k$ of characteristic 0. Let $\Cc'=\Pic^d_1(\Cc)$ be the weight 1 substack of  $\Pic^d(\Cc)$ (see Subsection \ref{subsec:2.3} and Definition \ref{defn:weight1pic}) for some $d>0$ such that $\Cc$ and $\Cc'$ are derived equivalent. Then $\Cc$ is the moduli space of rank $d$ degree $1-d$ vector bundles on $\Cc'$, and the universal sheaf for this moduli problem is the dual of the universal degree $d$ line bundle $\L$.%, whose associated integral functor inverts the derived equivalence $\Phi^\L:D(\Cc) \to D(\Cc')$. 
\end{thm}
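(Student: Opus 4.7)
My plan is to adapt the strategy of Theorem \ref{lem:akw} to the gerbey setting using the theory of integral transforms developed earlier in the paper. The universal degree $d$, weight $1$ line bundle $\L$ on $\Cc \times \Cc'$ is tautologically present from the definition $\Cc' = \Pic^d_1(\Cc)$; the claim to verify is then that $\L^{\vee}$ furnishes a universal sheaf on $\Cc \times \Cc'$ for a specific moduli problem on $\Cc'$.

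First, I would verify that $\L^{\vee}$ is a family of rank $d$, degree $1-d$ vector bundles on $\Cc'$ parameterized by $\Cc$. Concretely, for any morphism $T \to \Cc$, the pullback of $\L^{\vee}$ to $T \times \Cc'$ must be $T$-flat and fiberwise a rank $d$, degree $1-d$ vector bundle on $\Cc'$. The apparent mismatch (a line bundle cannot have rank $d$) is resolved by the gerbe structure on $\Cc'$: since $\Cc' = \Pic^d_1(\Cc)$ is a $\mu_d$-gerbe over its rigidification, a weight $1$ line bundle on $\Cc'$ corresponds via the banding to a rank $d$ bundle on the rigidification. The degree $1-d$ is then computed using intersection theory on the stack $\Cc'$, exploiting that $\L$ is degree $d$ in the $\Cc$-direction and weight $(1,1)$ on $\Cc \times \Cc'$.

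For universality, I would exploit the hypothesized derived equivalence $D(\Cc) \cong D(\Cc')$ together with the integral transform formalism of the paper. Given any family $\F$ of rank $d$, degree $1-d$ vector bundles on $\Cc'$ parameterized by a test scheme $T$, I would apply the integral transform with kernel (related to) $\L$ to transfer $\F$ to a complex on $T \times \Cc$, and show that the rank-and-degree hypothesis on $\F$ forces this complex to be, up to shift, a family of skyscraper-like objects on $\Cc$ parameterized by $T$. This yields the classifying morphism $T \to \Cc$ realizing $\F$ as the pullback of $\L^{\vee}$, and the uniqueness of the Fourier--Mukai kernel furnishes uniqueness of the map.

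The main obstacle, as I see it, is the stack-theoretic bookkeeping around ranks and degrees: carefully computing the fiberwise invariants of $\L^{\vee}$ via intersection theory on gerbes, and verifying that the integral transform applied to a rank $d$, degree $1-d$ family on $\Cc'$ outputs an honest sheaf concentrated in a single cohomological degree of the expected skyscraper type. Both of these steps lean on the stacky Orlov-type representability and the integral transform machinery for gerbey curves that the paper develops in its earlier sections, and on a careful accounting of weights when passing between $\Cc'$ and its rigidification.
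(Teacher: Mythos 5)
Your overall strategy --- take $\L^{\vee}$ as the candidate universal object, compute its rank and degree, and use the derived equivalence for universality --- matches the paper's, but the central computation goes wrong at the point where the rank $d$ is supposed to appear. You propose to get rank $d$ out of the fibers $\L^{\vee}|_{\{p\}\times \Cc'}$ directly, by asserting that $\Cc'$ is a $\mu_d$-gerbe over its rigidification and that a weight $1$ line bundle on $\Cc'$ corresponds via the banding to a rank $d$ bundle downstairs. This is not correct in the paper's framework: $\Cc'=\Pic^d_1(\Cc)$ is a $\gm$-gerbe, and the index of its Brauer class need not be $d$ --- indeed the gerbe can be trivial (e.g.\ when the underlying curve has a rational point), in which case a weight $1$ line bundle on $\Cc'$ is just a line bundle on $C'$ and has rank $1$, while the theorem still asserts rank $d$. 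So your mechanism does not even reproduce the classical (untwisted) elliptic-curve computation. The actual source of the rank $d$ is the pushforward along the projection $\pi:\Cc\times\Cc'\to\Cc'$: by Riemann--Roch for gerbey curves (Corollary \ref{cor:gerbeyRRforcurves}), $h^0(\Cc,\L|_{\Cc\times\{x\}})=d$ for every $x\in\Cc'$ since $d>0$ kills the $h^1$, so $\pi_*\L$ is a rank $d$ vector bundle on $\Cc'$. Your proposal never pushes $\L$ forward to $\Cc'$, which is the step that converts ``degree $d$ in the $\Cc$-direction'' into ``rank $d$ over $\Cc'$.''

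The degree computation is likewise asserted rather than performed. The paper's argument identifies $c_1(\L)$ explicitly as $\Delta+(d-1)\Cc$, where $\Delta$ is the graph of the morphism $\Cc\to\Cc'$, $p\mapsto d\cdot p$, and $\Cc$ denotes the fiber class; it then computes $c_1(\L)^2=2d-2$ and applies the gerbey Grothendieck--Riemann--Roch of Theorem \ref{thm:GRR} (with all Todd classes trivial by Lemma \ref{lem:toddtrivial}, and $\pi_!=\pi_*$ by Lemma \ref{lem:simplifiedgrrforcurves}) to conclude $\deg\pi_*\L=d-1$, whence the dual has rank $d$ and degree $1-d$. Saying that the degree ``is then computed using intersection theory on the stack $\Cc'$'' leaves out exactly this content, and combined with the incorrect rank mechanism it would not land on $1-d$. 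Your universality step via classifying morphisms is reasonable in outline (and more ambitious than what the paper actually does, which is to combine the equivalence of Theorem \ref{thm:gerbeygenus1} with the rank/degree computation and Atiyah's determinant isomorphism, Theorem \ref{thm:atiyah}), but note that it leans on a stacky Orlov representability statement that the paper explicitly does not prove.
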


%(\textcolor{red}{At some point, we should show that if $D(\Cc) \cong D(\Cc')$, then one is $\Pic^d$ of the other - if that is even true.})
This theorem is a direct generalization of Theorem \ref{lem:akw}. As a converse to this theorem, given two $\G_m$-gerbes, one of which is $\Pic^d$ of the coarse space of the other, we use the universal line bundle at the level of gerbes to construct a derived equivalence.

%In order to have a candidate for the kernel of a derived equivalence, their results require that $C \times C'$ admit a universal sheaf.  In the case that there is no universal sheaf, however, there is no obvious candidate for the kernel of a derived equivalence.  Our contribution is to extend the results in \cite{AKW} to the case that the coarse space does not admit a universal sheaf. (In Section~\ref{sec:lowdim}, we provide examples of curves $C$ and coarse spaces $C'$ satisfying this phenomenon.)  In this case, we must instead use the fine moduli stack of degree $d$ line bundles on a genus $1$ curve.  We prove the following theorem.

\begin{thm}\label{thm:gerbeygenus1}
Let $\Cc$ be a gerbey genus $1$ curve over a field $k$ of characteristic 0, and let $\Cc':=\Pic^d_1(\Cc)$ be the fine moduli space of weight $1$ degree $d$ line bundles on $\Cc$ for $d\neq 0$.  Let $\L$ be the universal weight $1$ degree $d$ line bundle on $\Cc \times \Cc'$.  The integral functor
\[
\Phi^\L:D(\Cc) \to D(\Cc')
\]
is an equivalence. 
\end{thm}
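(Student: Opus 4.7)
The strategy is to mimic the classical Fourier--Mukai argument, in the form of Bridgeland's criterion for an integral transform to be an equivalence, adapted to the setting of $\G_m$-gerbes using the theory of integral transforms for stacks developed earlier in the paper.

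First I would verify that $\Phi^\L$ admits both a left and a right adjoint, realised as integral transforms with kernel $\L^\vee$ twisted by the appropriate relative dualizing complexes of the two projections from $\Cc \times \Cc'$. On a gerbey genus $1$ curve the canonical sheaf is trivial up to a weight shift on the gerbe, which lines up the cohomological shifts needed below and identifies the left and right adjoints up to a shift.

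The core step is a pointwise computation. For a closed point $x$ of $\Cc$ with residue skyscraper $k(x)$, base change for stacks gives
\[
\Phi^\L(k(x)) \;\cong\; \L|_{\{x\}\times \Cc'} \;=:\; \L_x,
\]
which by construction is the weight $1$, degree $d$ line bundle on $\Cc'$ representing $x$ under the universal property of the fine moduli space $\Cc' = \Pic^d_1(\Cc)$. Fineness forces $\L_x \not\cong \L_y$ whenever $x \neq y$. Then
\[
R\Hom_{\Cc'}(\L_x,\L_y) \;\cong\; R\Gamma(\Cc',\, \L_x^\vee \otimes \L_y),
\]
and since $\L_x^\vee \otimes \L_y$ is a weight $0$, degree $0$ line bundle on a genus $1$ curve which is trivial exactly when $x=y$, Serre duality produces the point-like orthogonality required by Bridgeland's criterion: total vanishing for $x \neq y$, and Yoneda algebra $k \oplus k[-1]$ when $x=y$.

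Combined with the (weight-shifted) triviality of $\omega_{\Cc'}$, which handles the Serre-functor hypothesis, Bridgeland's criterion then yields full faithfulness; essential surjectivity follows from the fact that the family $\{\L_x\}_{x\in\Cc}$ generates the weight $1$ component of $D(\Cc')$ and that $D(\Cc')$ splits cleanly into weight components through which $\Phi^\L$ (whose kernel has bi-weight $(1,1)$) is graded. The main obstacle is precisely this gerbe bookkeeping: Bridgeland's criterion must be verified on $\G_m$-gerbes over genus $1$ curves, the "closed points" replaced by residue gerbes with nontrivial automorphism groups, and the base-change, projection, and duality formulas underlying the pointwise computation must be run on stacks rather than schemes. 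This is exactly where the integral transform theory for stacks developed earlier in the paper does the essential work.
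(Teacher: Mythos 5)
Your proposal follows essentially the same route as the paper: both reduce to the pointwise orthogonality (strong simplicity) conditions on the restrictions of $\L$, verified by pushing the weight-zero line bundles $\L_x^\vee\otimes\L_y$ down to the coarse genus $1$ curve where nontriviality of a degree-$0$ line bundle kills all cohomology, and then upgrade full faithfulness to an equivalence via the adjoints coming from Verdier duality together with triviality of the canonical bundle (Bridgeland's criterion). The only quibble is that ``fineness'' of $\Cc'$ controls the restrictions $\L|_{\Cc\times\{y\}}$ over points of $\Cc'$ rather than the restrictions $\L_x=\L|_{\{x\}\times\Cc'}$ over points of $\Cc$, so the nonisomorphy $\L_x\not\cong\L_y$ for $x\neq y$ requires a seesaw-type argument rather than the universal property directly --- a standard fact that does not affect the validity of the argument.
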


%\begin{thm}\label{thm:picdderivedequiv}
%Let $C$ be a genus 1 curve over $k$, and $\Cc':=\Pic^d(C)$ for $d\neq 0$ be the fine moduli space of degree $d$ line bundles on $C$.  Let $\L$ be the pullback of the universal degree $d$ line bundle on $C \times \Cc'$ to $C \times \bgm \times \Cc'$; then the integral functor $\Phi^\L:D(C\times B\G_m) \to D(\Cc')$ is an equivalence when $d>0$, and $\Phi^{\L^\vee}:D(C \times \bgm) \to D(\Cc')$ is an equivalence when $d<0$.
%\end{thm}

%Next, we use the techniques of this theorem to prove the analogue of Theorem \ref{lem:akw}.

The techniques of Theorems \ref{thm:explicitdim1} and \ref{thm:gerbeygenus1} allow us to go further and prove the following moduli-theoretic result.

\begin{thm}\label{thm:compositiondim1}
If $\Cc$ is a $\gm$-gerbe over a genus $1$ curve, $\Cc':=\Pic^d_1(\Cc)$ and $\Cc'':=\Pic^f_1(\Cc')$, then $\Cc''=\Pic^{df-1}_2(\Cc)$.
\end{thm}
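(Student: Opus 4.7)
\emph{Plan.} The strategy is to interpret the composition $\Phi^{\L'}\circ \Phi^\L$ of integral transforms as a single integral transform, identify its kernel as the universal family for $\Pic^{df-1}_2(\Cc)$, and conclude via the universal property.

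By Theorem~\ref{thm:gerbeygenus1}, both $\Phi^\L : D(\Cc)\to D(\Cc')$ and $\Phi^{\L'} : D(\Cc')\to D(\Cc'')$ are equivalences. The composition of integral transforms is again an integral transform, whose kernel is given by convolution; carrying the standard calculation over to our stacky setting yields
\[
\M \;:=\; Rp_{\Cc\Cc''\,*}\bigl(p_{\Cc\Cc'}^{*}\L \;\otimes\; p_{\Cc'\Cc''}^{*}\L'\bigr)\;\in\; D(\Cc\times\Cc''),
\]
with projections from the triple product $\Cc\times\Cc'\times\Cc''$, so that $\Phi^\M = \Phi^{\L'}\circ\Phi^\L$ is an equivalence. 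The core task is then to show $\M$ is (a shift of) a weight-$2$ degree-$(df-1)$ line bundle on $\Cc\times\Cc''$: once this is established, the universal property of $\Pic^{df-1}_2(\Cc)$ induces a morphism $\Cc''\to\Pic^{df-1}_2(\Cc)$, which can be promoted to an isomorphism by a standard argument paralleling the converse direction of Theorem~\ref{thm:explicitdim1} (both sides are $\gm$-gerbes over genus-$1$ curves with matching numerical data, and $\Phi^\M$ is an equivalence).

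To identify $\M$, I would compute it fibrewise over $\Cc''$. A test point $c''\in\Cc''$ corresponds to a weight-$1$ degree-$f$ line bundle $\L'_{c''}$ on $\Cc'$, and cohomology and base change give $\M|_{\Cc\times\{c''\}} \cong Rp_{\Cc\,*}(\L\otimes p_{\Cc'}^{*}\L'_{c''})$, which is exactly the image of $\L'_{c''}$ under an integral transform $D(\Cc')\to D(\Cc)$ analogous to $(\Phi^\L)^{-1}$. Using the intersection theory and stacky Riemann--Roch for gerbey curves developed in the earlier sections, I would verify that this complex is concentrated in a single cohomological degree and that the resulting line bundle has weight $2$ on $\Cc$ (the two weight-$1$ contributions from $\L$ on $\Cc\times\Cc'$ and from the $\Cc$-inertia acting on $\L'_{c''}$ through the FM equivalence compound additively) and degree $df-1$ (the $df$ term arises as the bilinear pairing of the two polarizations along $\Cc'$, while the $-1$ is a Riemann--Roch contribution forced by $\chi(\O_{\Cc'})=0$ on a genus-$1$ curve). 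This numerical shift is the stacky analogue of the classical Mukai identity $(\Phi^\P)^2 \cong [-1]^{*}[-1]$ for elliptic curves, in which composing two Fourier--Mukai kernels moves numerical invariants in exactly this way.

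The main obstacle is the middle step: verifying that $\M$ is concentrated in a single cohomological degree, pinning down its weight on $\Cc$ as exactly $2$, and most delicately obtaining the degree as $df-1$ rather than $df$. The $-1$ correction is the subtlest part, reflecting the interplay between $\chi(\O_{\Cc'})=0$ on a genus-$1$ curve and the self-intersection of Poincar\'e-type classes in the convolution defining $\M$; a careful stacky intersection calculation on $\Cc\times\Cc'\times\Cc''$, keeping track of the $\gm$-weights on all three factors, is what makes the argument go through.
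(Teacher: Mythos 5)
Your setup---realizing $\Phi^{\L'}\circ\Phi^{\L}$ as a single integral transform whose kernel is the convolution $\M = R\pi_{02*}(\pi_{01}^{*}\L\otimes\pi_{12}^{*}\L')$ on $\Cc\times\Cc'\times\Cc''$, computed by stacky intersection theory with trivial Todd classes---is exactly the paper's approach, and your observation that the $\gm$-weights add to give weight $2$ is correct. The gap is in your central claim that $\M$ is (a shift of) a \emph{line bundle} of degree $df-1$. It is not: the paper's computation of $c_1^3/6$ on the threefold shows that $\M$ is a vector bundle of \emph{rank $df$} and degree $df-1$. Your own proposed fibrewise check already contradicts the line-bundle claim: $\M|_{\Cc\times\{c''\}} \cong R\pi_{\Cc*}(\L\otimes\pi_{\Cc'}^{*}\L'_{c''})$ is the Fourier--Mukai image of the positive-degree line bundle $\L'_{c''}$ on $\Cc'$, and its fiber at a point $c$ is $H^{0}(\Cc',\,\L|_{\{c\}\times\Cc'}\otimes\L'_{c''})$, which by Riemann--Roch for gerbey curves has dimension equal to a positive degree, generically greater than $1$. (This is the same phenomenon as in Theorem~\ref{thm:explicitdim1}, where the transform of skyscrapers/line bundles produces rank-$d$ bundles, and classically for the Poincar\'e bundle on an elliptic curve.) Consequently the ``universal property of $\Pic^{df-1}_{2}(\Cc)$'' cannot be applied to $\M$, and that step of your argument fails.

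The missing ingredient is the determinant map together with Atiyah's theorem (Theorem~\ref{thm:atiyah}), as used in the discussion preceding the theorem: since $\gcd(df,\,df-1)=1$, the moduli space of rank $df$, degree $df-1$ bundles is identified with $\Pic^{df-1}$ via $\det$, and it is \emph{this} identification, applied to the moduli interpretation of $\Cc''$ furnished by the rank-$df$ kernel $\M$, that yields $\Cc''=\Pic^{df-1}_{2}(\Cc)$. Separately, your justification of the number $df-1$ (a ``bilinear pairing'' term $df$ corrected by $-1$ from $\chi(\O_{\Cc'})=0$) is a heuristic, not a computation; the paper obtains both the rank $df$ and the degree $df-1$ from the explicit cube of the divisor class $\pi_{01}^{*}c_1(\L)+\pi_{12}^{*}c_1(\L')$, written in terms of graph and fiber classes, and that intersection-theoretic calculation is the substantive content you would still need to carry out.
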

%\textcolor{red}{I will fix the weights here in a bit.}{\color{violet} Done!  The 1 is changed to a 2 in the last subscript.}

The stacks that appear in these theorems are all $\gm$-gerbes over genus $1$ curves.  To prove these theorems, we show that much of the machinery of derived categories and functors between them can be transferred to the level of stacks which are $\gm$ gerbes over proper schemes. We produce a theory of integral transforms and prove a strong simplicity criterion and a version of Verdier duality for these stacks.  The theory of integral transforms of stacks is closely related to that of twisted derived equivalences of varieties, which is studied in detail in ~\cite{andrei}.  This relationship is given by the following theorem.  %We will prove that many derived equivalences of stacks arise from certain moduli problems.  This will generalize some known results on both twisted and untwisted derived equivalences of schemes.  In particular, we prove the following theorem:

\begin{thm}
\label{theorem:introduction:maintheorem1}
    Suppose $X$ and $Y$ are two projective varieties with $\alpha \in H^2_{\et}(X,\gm)$ and $\beta \in H^2_{\et}(Y,\gm)$ corresponding to $\gm$-gerbes $\X$ and $\Y$, respectively, and suppose that $\Phi^\K:D(\X) \cong D(\Y)$ is an equivalence, with $\K$ of weight $1$.  Then there is an equivalence of twisted derived categories $D(X,\alpha) \cong D(Y,\beta)$.  Furthermore, %if the equivalence $D(\X) \cong D(\Y)$ is realized by an integral functor $\Phi^\K:D(\X) \to D(\Y)$, then 
    the equivalence $D(X,\alpha) \cong D(Y,\beta)$ is realized by $\Phi^{\rig(\K)}$ where $\rig:\X \times \Y \to X \times Y$ is the rigidification map.
\end{thm}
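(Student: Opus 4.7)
The plan is to leverage the orthogonal weight decomposition
\begin{equation*}
D(\X) \;=\; \bigoplus_{w \in \Z} D(X, \alpha^w), \qquad D(\Y) \;=\; \bigoplus_{w \in \Z} D(Y, \beta^w),
\end{equation*}
coming from the $\gm$-gerbe structures on $\X$ and $\Y$ that is established earlier in the paper. The first step is to unpack what it means for the kernel $\K$ on $\X \times \Y$ to have weight $1$. A complex on the $\gm \times \gm$-gerbe $\X \times \Y$ over $X \times Y$ carries a bi-weight $(a,b)$ under the two $\gm$-stabilizers, and I would show that a weight $1$ kernel intertwines the weight decompositions in the following sense: for $\E$ of weight $w$ on $\X$, the pullback $Lp_1^*\E$ has bi-weight $(w,0)$; tensoring with $\K$ produces components of prescribed bi-weight; and $Rp_{2*}$ annihilates anything of nonzero $\X$-weight, since $p_2$ factors through rigidification in the $\X$-direction. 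The upshot is that $\Phi^{\K}$ sends $D(X,\alpha^w)$ into $D(Y,\beta^w)$ for every $w$.

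The second step is then a formal observation. A triangulated equivalence between two categories equipped with matching orthogonal decompositions which preserves that decomposition index-by-index must restrict to an equivalence between corresponding summands. Specializing to $w=1$ immediately yields the desired equivalence of twisted derived categories $D(X,\alpha) \cong D(Y,\beta)$.

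The third step is to identify this restricted equivalence explicitly with $\Phi^{\rig(\K)}$. Here the compatibility of the rigidification map $\rig:\X\times\Y\to X\times Y$ with pullback, tensor product, and pushforward is essential. Using the projection formula and flat base change for $\rig$ in the twisted setting, one should be able to rewrite $Rp_{2*}(Lp_1^*\E \otimes \K)$, on the weight $1$ sector, as
\begin{equation*}
R\pi_{Y*}\bigl(L\pi_X^* \E_{\rig} \otimes \rig(\K)\bigr),
\end{equation*}
where $\E_{\rig} \in D(X,\alpha)$ is the $\alpha$-twisted sheaf corresponding to the weight $1$ object $\E$ under the weight decomposition.

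I expect the main obstacle to lie in this last step: verifying that rigidification commutes with the Fourier--Mukai operations on the weight $1$ sector. Rigidification only partially collapses the $\gm\times\gm$-gerbe structure on $\X\times\Y$, so carefully tracking the interplay between the $\X$- and $\Y$-weights through the pullback--tensor--pushforward pipeline, and matching twists on both sides, is delicate. This should ultimately fall out of the integral transform formalism for $\gm$-gerbes developed earlier in the paper, but the bookkeeping of weights and the comparison of $\rig_*$ with the Fourier--Mukai machinery is where the technical content of the argument lies.
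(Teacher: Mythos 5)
Your proposal is correct in outline but takes a genuinely different route from the paper's. The paper's proof pulls $\K$ back along a local section to get a twisted sheaf $s^*(\K)$ on $X \times Y$, observes that strong simplicity of $\K$ over $\X$ (equivalent to full faithfulness of $\Phi^\K$ by Proposition~\ref{prop:stackystronglysimple}) descends to strong simplicity of $s^*(\K)$ over $X$, and then upgrades the resulting fully faithful functor $\Phi^{s^*(\K)}:D(X,\alpha)\to D(Y,\beta)$ to an equivalence by invoking Proposition~\ref{prop:calabiyauequiv} --- note that this last step quietly assumes $X$ and $Y$ are Calabi--Yau, a hypothesis absent from the statement of the theorem. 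You instead restrict the given equivalence directly: you use the orthogonal weight decomposition $D(\X)\cong\bigoplus_w D(X,w\cdot\alpha)$, show that $\Phi^\K$ respects it, and conclude by the formal fact that an equivalence preserving matching orthogonal decompositions index-by-index restricts to an equivalence on each summand. This is precisely the alternative sketched in the remark following the paper's proof, and it buys you something real: essential surjectivity is inherited from the given equivalence rather than re-proved, so neither strong simplicity nor any Calabi--Yau hypothesis is needed. The one step requiring care is your first: a kernel of a single pure bi-weight $(a,b)$ on $\X\times\Y$ yields a functor that annihilates every weight sector of $D(\X)$ except $w=-a$ (precisely because, as you note, $R\pi_{\Y*}$ kills nonzero $\X$-weight), so the assertion that $\Phi^\K$ carries $D(X,w\cdot\alpha)$ to $D(Y,w\cdot\beta)$ \emph{for every} $w$ requires either that $\K$ carry components in each relevant bi-weight or that you claim the statement only for the $w=1$ sector, which is all the theorem needs; the commutative diagram in the paper's proof of Proposition~\ref{prop:stackystronglysimple} glosses over the same point, so this is a shared ambiguity rather than an error particular to your argument.
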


This theorem relates both the existence of these derived equivalences and the functors used to produce them.The converse of this theorem is not necessarily true. A twisted derived equivalence $D(X, \alpha) \cong D(Y,\beta)$ does not imply $D(\X) \cong D(\Y)$ on the corresponding gerbes. It does however, imply an equivalence of a special subcategory, which we prove in Section 4, Theorem \ref{thm:twistedversusnormalderivedequivalences}.

The paper is organized as follows.  In Section~\ref{sec:gerbeybasics}, we discuss background on twisted sheaves and sheaves on gerbes, and describe the relationship between the two.  In Section~\ref{sec:gerbeyintegraltransforms}, we prove that integral transforms exist for $\gm$-gerbes over proper schemes and that they satisfy many of the same properties as integral transforms of schemes.  In particular, Verdier duality holds; there is a strong simplicity criterion for a kernel of an integral transform that guarantees that the integral transform is fully faithful; and when the canonical bundles are trivial, any fully faithful integral transform is an equivalence.  In Section~\ref{sec:twisted}, we prove Theorem~\ref{theorem:introduction:maintheorem1} and recover some results of C\u ald\u araru on twisted derived categories of schemes.  In Section~\ref{sec:GRR}, we prove Grothendieck-Riemann-Roch for gerbes over proper schemes.  Grothendieck-Riemann-Roch will be used for intersection theory computations in Section~\ref{sec:lowdim}, where we apply the setup of Section~\ref{sec:gerbeyintegraltransforms} to gerbey curves and prove Theorems~\ref{thm:explicitdim1}, ~\ref{thm:gerbeygenus1}, and \ref{thm:compositiondim1}.

\subsection{Notation}

Throughout this paper, $k$ will denote a field of characteristic $0$, and $B\gm$ will denote the stack ${B\gm}$ over $\Spec k$. The Picard stack of a scheme $X$ will be denoted $\Pic(X)$, while the Picard scheme will be denoted $\text{Pic}(X)$. Unless indicated otherwise, roman letters (e.g. $C$) will be used for schemes, while calligraphic ones (e.g. $\Cc$) will be used for stacks. A \emph{gerbey} curve will refer to a $\G_m$ gerbe over a smooth projective curve. Note here that a $\G_m$ gerbe over an $n$-dimensional scheme is an $n-1$ dimensional stack.

\subsection{Acknowledgements}

The authors would like to thank Katrina Honigs, Sarah Frei, and Danny Krashen for very helpful feedback on an earlier draft of this paper.  They would also like to thank Eric Larson and Isabel Vogt for asking questions during the second author's area exam that led to the results in Section~\ref{sec:lowdim}, and Ben Lim for help with several technical points about stacks.  We would also like to thank  Ravi Vakil, Benjamin Antieau, Jordan Ellenberg and Andrei C\u ald\u araru for helpful insights and comments. 

This material is based upon work supported by the National Science Foundation under Grant No. DMS-1928930 while the first author participated in a program hosted by the Mathematical Sciences Research Institute in Berkeley, California, during the Fall 2020 semester.

\section{Sheaves on gerbes and twisted sheaves on schemes}\label{sec:gerbeybasics}
Our work in this paper is closely related to that of C\u ald\u araru and Lieblich on twisted sheaves and twisted derived categories.  In this section, we discuss the relationship between twisted sheaves on a scheme and sheaves on an appropriate stack. Sections~\ref{subsec:twistedsheaves} and ~\ref{subsec:gerbes} deal with standard background on twisted sheaves and gerbes, respectively.  Sections~\ref{subsec:2.3} and~\ref{subsec:2.4} are less standard, and make explicit an identification between sheaves on a gerbe and twisted sheaves on the coarse space.

For more details on twisted sheaves and for proofs of the statements below, see ~\cite{andrei}.  %this?
Throughout this section $X$ will denote a smooth, projective variety, $\Br(X)=H^{2}_{\et}(X,\gm)$ its \'{e}tale cohomological Brauer group and $D(X)$ its bounded derived category of coherent sheaves. 

\subsection{Brauer classes as twisting data for sheaves}\label{subsec:twistedsheaves}

A class $\alpha \in \Br(X)$ can be interpreted as twisting data for sheaves on $X$, where the twisting data is described using \v{C}ech cocyles. Using the cocycle interpretation of \'{e}tale cohomology, $\alpha$ is given by data of a cover $\{U_i \}$ of $X$, along with sections $\{\alpha_{ijk} \in \O_X^\times(U_{ijk})\}$ satisfying the 2-cocyle condition and defined up to a 1-coboundary. 

\begin{defn}
Let $\alpha \in \Br(X)$, and let $\{\alpha_{ijk}\}$ be a \v{C}ech 2-cocycle representing it. An $\alpha$-twisted sheaf $\F$ on $X$ is the data of a sheaf $\F_i$ on each $U_i$ in the cover, together with maps $\tau_{ij}:U_i \times_X U_j \rightarrow U_i \times_X U_j$ satisfying $\tau_{ij} \circ \tau_{jk} \circ \tau_{ik}^{-1}=\alpha_{ijk}$.  
\end{defn}

Conversely, given a collection of sheaves $\{\F_i \}$, where each $\F_i$ is a sheaf on an open set $U_i$, one can construct a Brauer class by setting $\alpha_{ijk} = \tau_{ij} \circ \tau_{jk}\circ \tau_{ik}^{-1}$. Thus, Brauer classes represent the failure of local sheaves to glue together to give a global sheaf.

Given a variety $X$ and an $\alpha \in \check{C}^2(X, \O_X^{*})$, one can form the abelian category $\Mod(X, \alpha)$: the category of $\alpha$-twisted sheaves on $X$. The objects in this category are $\alpha$-twisted sheaves and the morphisms are morphisms of sheaves $\{\F_i \rightarrow \Gg_i \}$ that preserve the classes $\alpha_{ijk}$. 
Further, if two \v{C}ech cocycles $\{\alpha_{ijk}\}$ and $\{\alpha'_{ijk}\}$ define the same Brauer class%(or if one is defined on a refinement of the cover of the other)
, then $\Mod(X, \alpha) \cong {\Mod}(X, \alpha')$. Thus one can indeed make sense of the category $\Mod(X, \alpha)$ for $\alpha \in \Br(X)$. An object $\F$ in this category is called \emph{coherent} if each $\F_i$ is a coherent sheaf. We will let $\Coh(X,\alpha)$ denote the subcategory of $\Mod(X, \alpha)$ consisting of the coherent objects.  The $\alpha$-twisted derived category of $X$ is defined as
\[
D(X, \alpha) := D^b(\Coh(X, \alpha)).
\]

\subsection{Brauer classes and gerbes}\label{subsec:gerbes}

The second interpretation of Brauer classes that we will use is that of gerbes. For a detailed exposition about gerbes, see~\cite[Chapter 12]{olsson}). We only recall some basic notions and properties associated with them. 
For each $\alpha \in H^2_{\et}(X,\gm)$ there exists an algebraic stack $f: \X \rightarrow X$ such that
\begin{enumerate}
    \item each point $x \rightarrow \X$ has automorphism group $\G_m$, and
    \item the rigidification of $\X$ is $X$.
\end{enumerate}

\'Etale locally on $X$, $\X$ is of the form $X \times \bgm$. One may think of the class $\alpha$ as determining the transition functions for the $\bgm$-bundle structure.  Indeed, by~\cite[06QB]{stacks}, there exists an algebraic space $U$, a group algebraic space $G$ which is flat and locally of finite presentation over $U$, and a surjective, flat, locally finitely presented morphism $U \to X$ such that $\X \times_X U \cong [U/G]$ over $U$.

The derived category of an algebraic stack is defined as follows.  Let $\X$ be an algebraic stack.  Let $\Mod(\X)$  denote the category of sheaves of $\O_\X$-modules on the lisse-\'etale site of $\X$. 
Since $\X$ is assumed to be algebraic, it admits a smooth cover $U \to \X$ by a scheme $U$.  A sheaf on $\X$ is said to be coherent if its pullback to $U$ is coherent. Let $\Coh(\X)$ denote the subcategory of $\Mod(\X)$ of coherent sheaves. For general algebraic stacks, the inclusion of $\Coh(\X)$ into $\Mod(\X)$ need not be exact (see e.g.~\cite[06WU]{stacks}). On the lisse-\'etale site, however, the inclusion $\Coh(\X) \hookrightarrow \Mod(\X)$ is exact (see~\cite[07B8]{stacks}). This means that the derived category $D(\X)$ has a description analogous to that for schemes: it is the bounded derived category of complexes of $\O_\X$-modules with coherent cohomology sheaves. This means that, for any Brauer class $\alpha \in \Br(X)$, we can consider the derived category $D(\X)$ for $\X$ a stack over $X$ associated to $\alpha$.

\begin{rem}
When working on the \'etale site of a gerbe, the derived category of $\X$ is easy to define.  When working on other sites (the fppf site, in particular), or on the \'etale site of a stack which is not smooth over its coarse space, defining the derived category of $\X$ may not be so straightforward, since the inclusion of $\Coh(\X) \hookrightarrow{}\Mod(\X)$ is not necessarily exact.  
Even with this complication, integral transforms can still be defined for any stack which is cohomologically proper over its good moduli space. 
\end{rem}

Thus from $\alpha \in \Br(X)$ one can produce two $k$-linear triangulated categories: $D(X,\alpha)$ and $D(\X)$.  There is a close relation between these two interpretations of Brauer classes: $\alpha$-twisted sheaves on $X$ are ``almost'' the same thing as sheaves on $\X$.  More precisely, Lieblich has proved in~\cite[Prop. 2.2.3.3]{max} that there is an equivalence of fibered categories between twisted sheaves on $X$ and a certain subcategory of sheaves on $\X$.  In what follows, we will reprove some of his results; the ideas are mostly the same as those in ~\cite{max}, although the language used here is somewhat different.

\subsection{Weight of a sheaf on a $\gm$-gerbe}\label{subsec:2.3}

A sheaf on the stack $\bgm$ is the data of a $\gm$-representation. This means that for a coherent sheaf on $\X$, each fiber over a closed point of $X$  comes equipped with a $\gm$ action. Let $V$ be a $\G_m$-representation over $k$ and let $t \in \Z$. For $v \in V$, we say $\gm$ acts on $v$ by weight $t$ if for any $\lambda \in \G_m$, $\lambda \cdot v = \lambda^tv$. If $\G_m$ acts by weight $t$ on every $v \in V$, then we say that $V$ is of pure weight $t$ (as opposed to mixed weight). Any mixed-weight representation can be decomposed into the direct sum of its weight spaces. 
Let $\Coh^{t}(\X)$ denote the subcategory of $\Coh(\X)$ consisting of those sheaves on $\X$ whose stalks are weight $t$ $\gm$-representations. We would like to understand the relationship between $\Coh^t(\X)$ and $\Coh(X,\alpha)$ for each $t$, and understand how the $\Coh^t(\X)$ relate to each other as $t$ varies. As a first step, we discuss the relationship between $\Coh(\bgm)$ and $\Coh(\Spec k)$.  

Recall that $\bgm$ is a moduli space for line bundles, so that for a scheme $T$, $\Hom(T,\bgm)\cong \Pic(T)$. We will think of the $T$ points of $\bgm$ as pairs $(T,L)$ with $L \in \Pic(T)$. % That is, each morphism from $T$ to $\bgm$ is determined by a line bundle on $T$.  
A coherent sheaf $F$ on $\bgm$ is determined by the following data: for every pair $(T,L)$ with $L\in \Pic(T)$, a coherent sheaf $F_L$ on $T$, satisfying the descent data for sheaves on a stack. In this case, $F_L$ is the pullback of $F$ along the morphism $L:T \to \bgm$. On the other hand, a coherent sheaf $V$ on $\Spec k$ can be thought of as the collection of pullbacks $V_T$, for each scheme $T \rightarrow \Spec k$.

Let $f:\bgm \to \Spec k$ be the structure map and let $s:\Spec k \to \bgm$ be a section.  
There is a morphism $s^*:\Coh(\bgm) \to \Coh(\Spec k)$ induced by the pullback along $s$.  The pullback $s^*$ has the effect of sending the collection $F_L$ of sheaves to $F_{\O_T}$, the sheaf associated to the trivial line bundle on $T$. One may profitably think of $s^*$ as a ``forgetful map,'' since it forgets the data of $\Pic(T)$ for each test scheme $T$. Further, since $f \circ s$ is the identity on $\Spec k$, $f^*$ is a right inverse to $s^*$ on $\Coh(\Spec k)$. In general, $s^*$ need not admit a left inverse. But we claim that for any $t \in \Z$, $s^*$ restricted to $\Coh^t(\bgm)$ does have a left inverse. 
\begin{lem}
\label{lemma:weighttsection}
For a fixed $t \in \Z$, define the weight $t$ section map,
\begin{align*}
    f_t^*:\Coh(\Spec k) \rightarrow \Coh(B\G_m)\\
    F/T \mapsto ((T,L) \mapsto F \otimes L^{\otimes t}),
\end{align*}
with a weight 0 action on $F$ and weight $1$ action on $L$. Then $f_t^*$ is the inverse of $s^*$ restricted to $\Coh^t(\bgm)$.  In particular for each $t$, $s^*$ induces an equivalence:
$$
\Coh^t(\bgm) \cong \Coh(\Spec k).
$$
\end{lem}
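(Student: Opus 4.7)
The plan is to exploit the description of $B\gm$ as the classifying stack for line bundles and, in particular, the existence of the tautological weight $1$ line bundle $\L$ on $B\gm$ whose pullback under a $T$-point $L : T \to B\gm$ is exactly $L$. With this object in hand, any weight $t$ sheaf on $B\gm$ should decompose as a weight $0$ piece (i.e., a pullback from $\Spec k$) tensored with $\L^{\otimes t}$, and this is what the pair $(s^*, f_t^*)$ will encode.

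First, I would check that $f_t^*$ actually lands in $\Coh^t(B\gm)$: the assignment $(T,L) \mapsto F \otimes L^{\otimes t}$ satisfies the descent condition because $L^{\otimes t}$ does, and along a morphism $\lambda \in \gm(T) = \Aut(L)$ the induced action on $F \otimes L^{\otimes t}$ is scaling by $\lambda^t$ (since $F$ carries weight $0$ and $L$ weight $1$), so the stalk is a pure weight $t$ representation. Next, the composition $s^* \circ f_t^*$ is computed by plugging in the trivial line bundle $\O_{\Spec k}$: we get $F \otimes \O_{\Spec k}^{\otimes t} = F$, functorially in $F$, giving a natural isomorphism $s^* \circ f_t^* \cong \mathrm{id}_{\Coh(\Spec k)}$.

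The nontrivial direction is $f_t^* \circ s^* \cong \mathrm{id}_{\Coh^t(B\gm)}$. For $G \in \Coh^t(B\gm)$, I need a natural isomorphism
\[
G_{(T,L)} \;\cong\; (s^*G)_T \otimes L^{\otimes t}.
\]
The strategy is to construct a comparison morphism using the universal line bundle $\L$ on $B\gm$: the identification $L^* \L \cong L$ combined with the weight $0$ pullback $f^* s^* G$ gives a natural map
\[
f^* s^* G \otimes \L^{\otimes t} \longrightarrow G,
\]
defined on $(T,L)$-points by using the weight $t$ $\gm$-action to compare the fiber $G_{\O_T}$ (twisted by $L^{\otimes t}$) with $G_L$. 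To check this is an isomorphism I can pass to a smooth cover where $B\gm$ is trivialized, reducing to the affine case $\bgm = [\Spec k /\gm]$. There, a sheaf on $B\gm$ is a $\gm$-representation; since $\gm$ is linearly reductive, every representation decomposes canonically into weight spaces, and a pure weight $t$ representation $V$ is canonically isomorphic to $V \otimes k_{(1)}^{\otimes t}$ where $k_{(1)}$ is the standard weight $1$ character and the first $V$ is regarded as a weight $0$ space. Translating back, this is exactly the required isomorphism.

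The main obstacle, and the step that needs the most care, is this last identification: one must verify that the comparison morphism built from the universal line bundle is indeed an isomorphism of sheaves on the lisse-\'etale site of $B\gm$, and functorial in $G$. This is essentially the content of the weight decomposition of $\gm$-representations, but it needs to be expressed in the descent language of the definition, checking compatibility with the gluing data $\tau_{ij}$ on a cover. Once this is established, taking $G = f_t^*(F)$ recovers the other unit/counit as identity, and $s^*$ and $f_t^*$ are mutually inverse equivalences between $\Coh^t(B\gm)$ and $\Coh(\Spec k)$.
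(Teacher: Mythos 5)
Your argument is correct and is essentially the paper's own: both hinge on checking that $f_t^*$ lands in weight $t$ (additivity of weight under tensor product), that $s^*\circ f_t^*=\mathrm{id}$ by evaluating at the trivial bundle, and that every weight $t$ sheaf $G$ untwists as $(G\otimes \L^{\otimes -t})\otimes \L^{\otimes t}$ with the first factor of weight $0$, hence pulled back from $\Spec k$. The paper phrases this last step as essential surjectivity of $f_t^*$ rather than constructing the counit explicitly, but the content — the weight decomposition of $\gm$-representations applied on the presentation $\Spec k\to B\gm$ — is identical.
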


\begin{proof}
Note first that the image of $f_t^*$ is indeed inside $\Coh^t(\bgm)$. Since the weight of a representation is additive on tensor products, $\gm$ acts on $L^{\otimes t}$ with weight $t$. The fact that $f_t^*$ is a right inverse for $s^*$ follows from the interpretation of $s^*$ as a forgetful map. In order to show that it is also a left inverse, it is enough to show that $f_t^*\Coh(\Spec k) \to \Coh^t(\bgm)$ is surjective. To see this, note that for some sheaf $F$ on $\bgm$ on which $\gm$ acts by weight $t$, $F$ may be written as $(F \otimes L^{\otimes -t})\otimes L^{\otimes t}$, so that $F=f_t^*(F \otimes L^{\otimes -t})$. 
 
\end{proof}

\begin{rem}
The weight $1$ section map $f_1^*: \Coh(X,\alpha) \rightarrow \Coh(\X)$ induces a map on the derived categories $Lf_1^*: D(X,\alpha) \rightarrow D(\X)$ via pullback, and by construction of $f_1^*$ the image sits inside $D^1(\X)$.
Let $F$ be a sheaf on $X$, and identify $F$ with an object in $D(X)$ by placing it in degree $k$ for some integer $k$.  Then $Lf_1^*F$ as an element of $D(\X)$ is the sheaf $f_1^*(F)$ in degree $k$.  This can be computed directly via resolutions. Note that it is important here that $F$ is an honest sheaf rather than any complex of sheaves.
\end{rem}

%\textcolor{red}{I just realized that we don't really use $L$ and $R$ everywhere - only sometimes. Should we switch to a more consistent form?} {\color{violet} yes, we really should.  I've endeavored to get all of them switched to L and R, but it would be great if you can make another pass through to make sure that all the $L$s and $R$s are there.}

\subsection{Preliminary results}\label{subsec:2.4}

By Lemma \ref{lemma:weighttsection} and a base change argument, if $\X$ is the trivial gerbe over a scheme $X$, then we have $\Coh^1(\X) \cong \Coh(X)$, where $\Coh^1(\X)$ denotes the subcategory of $\Coh(\X)$ consisting of sheaves of pure weight $1$.  We now show that this equivalence holds for non-trivial gerbes as well.

\begin{prop}\label{prop:equivofcats}
There is a map of abelian categories $s^*:\Coh^1(\X) \to \Coh(X,\alpha)$ which is defined \'etale locally by pulling back along a section $U \to U \times \bgm$, that is an equivalence.
\end{prop}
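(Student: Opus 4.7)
The plan is to define $s^*$ by applying Lemma~\ref{lemma:weighttsection} \'etale-locally and to verify that the resulting gluing data on triple overlaps are precisely the cocycle data for an $\alpha$-twisted sheaf. First I would fix an \'etale cover $\{U_i \to X\}$ trivializing the gerbe, together with trivializations $\varphi_i \colon \X \times_X U_i \xrightarrow{\sim} U_i \times \bgm$. A representative 2-cocycle $\{\alpha_{ijk}\} \in \O_X^\times(U_{ijk})$ for $\alpha$ is obtained from the 2-automorphism of the trivial gerbe measured by $\varphi_i \circ \varphi_j^{-1} \circ \varphi_j \circ \varphi_k^{-1} \circ \varphi_k \circ \varphi_i^{-1}$, i.e.\ multiplication by $\alpha_{ijk}$.

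Given $F \in \Coh^1(\X)$, I would set $F_i := s_i^*(\varphi_{i,*} F|_{\X \times_X U_i})$, where $s_i \colon U_i \to U_i \times \bgm$ is the tautological section from the trivial line bundle. By Lemma~\ref{lemma:weighttsection} this is an equivalence of abelian categories on each $U_i$, and the descent data defining $F$ on the cover $\{\X \times_X U_i\}$ pulls back along the $s_i$ to isomorphisms $\tau_{ij} \colon F_i|_{U_{ij}} \xrightarrow{\sim} F_j|_{U_{ij}}$. The key computation is to check on $U_{ijk}$ that
\[
\tau_{ij}\circ\tau_{jk}\circ \tau_{ik}^{-1} = \alpha_{ijk}\cdot \mathrm{id}_{F_i|_{U_{ijk}}},
\]
so that $(F_i,\tau_{ij})$ is an $\alpha$-twisted sheaf. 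This is the point where the weight~$1$ hypothesis enters: the 2-automorphism $\alpha_{ijk} \in \gm$ acts on a pure weight $t$ sheaf by $\alpha_{ijk}^{t}$, so weight $1$ is exactly what is needed to recover the defining cocycle condition of $\Coh(X,\alpha)$. (For general $t$ the same recipe would produce an $\alpha^{t}$-twisted sheaf.) Independence of the choice of cover and cocycle representative follows because a change of trivializations $\varphi_i$ produces a 1-coboundary modification of $\{\alpha_{ijk}\}$, and the corresponding categories $\Mod(X,\alpha)$ are canonically equivalent under such changes, as recalled in Subsection~\ref{subsec:twistedsheaves}.

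Next I would construct a quasi-inverse $t^* \colon \Coh(X,\alpha) \to \Coh^1(\X)$ by reversing the recipe: given an $\alpha$-twisted sheaf $(F_i,\tau_{ij})$, apply the weight~$1$ section map $f_{1}^{*}$ of Lemma~\ref{lemma:weighttsection} on each $U_i$ to obtain weight~$1$ sheaves $\widetilde{F}_i$ on $\X \times_X U_i$, and glue via $\tau_{ij}$. The discrepancy $\alpha_{ijk}$ in the cocycle for $\tau_{ij}$ is exactly cancelled by the 2-automorphism $\alpha_{ijk}$ of the gerbe that obstructs strict descent on $\X$, so the $\widetilde{F}_i$ do descend to a weight~$1$ sheaf on $\X$. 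Because the inverse constructions of $s^*$ and $t^*$ are \'etale-local and agree with the mutually inverse equivalences in Lemma~\ref{lemma:weighttsection} on each $U_i$, the natural isomorphisms $s^* t^* \cong \mathrm{id}$ and $t^* s^* \cong \mathrm{id}$ glue from the local ones.

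The main obstacle will be the bookkeeping surrounding the 2-morphisms: verifying carefully that the 2-automorphism $\alpha_{ijk}$ of the local trivializations of $\X$ is exactly the scalar appearing when comparing $\tau_{ij}\circ\tau_{jk}$ with $\tau_{ik}$ after pulling back along the sections $s_i$, and that this scalar is unaffected by the choice of how one lifts the descent data on $\X$ through the $\varphi_i$. Once this identification of 2-cocycles is pinned down, the rest of the proof is a formal consequence of Lemma~\ref{lemma:weighttsection} and \'etale descent for coherent sheaves.
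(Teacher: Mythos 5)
Your proposal is correct and follows essentially the same route as the paper's proof: trivialize the gerbe étale-locally, apply the weight-one case of Lemma~\ref{lemma:weighttsection} on each chart, and observe that the descent data for a weight~$1$ sheaf on $\X$ pulls back along the local sections to exactly the $\alpha$-twisted cocycle condition, with $f_1^*$ furnishing the quasi-inverse. You spell out the cocycle bookkeeping (and the observation that weight $t$ would instead produce a $t\cdot\alpha$-twisted sheaf) more explicitly than the paper does, but the underlying argument is the same.
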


\begin{proof}
We first show that there is an equivalence of fibered categories over $X$ between $\Coh^1(\X)$ and $\Coh(X,\alpha)$. Since $X$ is locally trivializable, there exists an \'etale cover $\{U_i\}$ of $X$ such that the structure morphism $\X\to X$ is locally of the form $U_i \times \bgm \to U_i$.  Each of the morphisms $U_i \times \bgm \to U_i$ admits a section $s_i: U_i \rightarrow U_i \times \bgm$.  By the discussion above, $\Coh^1(U_i \times \bgm) \cong \Coh(U_i)$.  Therefore the desired equivalence of fibered categories holds over each \'etale open in a cover of $X$.

It remains to show that the local equivalences of fibered categories glue. We claim that the gluing data is provided by $\alpha$. If $F \in \Coh^1(\X)$, then one can define $s^*(F)$ as the collection of sheaves $\{s_i^*(F|_{U_i \times \bgm}) \}_{i}$. Since $F$ is a sheaf on $\X$, by pulling back transition functions, one can see that the collection $\{s_i^*(F|_{U_i \times \bgm}) \}_{i}$ is an $\alpha$-twisted sheaf on $X$. A similar argument shows that $f_1^*$, which is a priori defined locally on $\Coh(X,\alpha)$, does indeed glue to give a global inverse to $s^*$. This gives the desired equivalence of fibered categories on $X$.  To get the equivalence of abelian categories, one can take global sections of each of the fibered categories.

\end{proof}

\begin{rem}
One may define $\tilde{s}^*$, a ``full" version of $s^*$, as a map
$$
\tilde{s}^*:\Coh(\X) \to \underset{\beta \in \Br(X)}{\bigoplus} \Coh(X,\beta).$$ 
This map respects tensor products and $\Hom$s: if $F \in \Coh(\X)$ has weight $t$ and $H$ has weight $t'$, then $F \otimes H$ has weight $t+t'$ and $\tilde{s}^*(F \otimes H) \in \Coh(X,(t-t')\alpha)$. Similarly, $\Hom(F,H)$ has weight $t'-t$, and $\tilde{s}^*(\Hom(F,H))\in \Coh(X,(t'-t)\alpha)$. 
\end{rem}

For later use, we prove that the map $s^*:\Coh^1(\X) \to \Coh(X,\alpha)$ commutes with flat base change.  

\begin{prop}\label{prop:sisflat}
The map of coherent sheaves $s^*:\Coh^1(\X) \to \Coh(X,\alpha)$ is pullback along a flat map of stacks.  In particular, $s^*$ commutes with $\Hom$ and with base change.
\end{prop}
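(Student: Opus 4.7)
The plan is to reduce to an étale-local check. By the construction in Proposition~\ref{prop:equivofcats}, on a trivializing étale cover $\{U_i \to X\}$ the functor $s^*$ is literally pullback along the canonical section $s_i: U_i \to \X|_{U_i} \cong U_i \times B\gm$, with the Brauer class $\alpha$ absorbed into the gluing data on the target side. Since exactness, commutativity with internal $\Hom$, and commutativity with base change are all properties of morphisms of stacks that may be checked étale-locally on the base, it suffices to show that each $s_i$ is flat.

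To see this, recall that $B\gm = [\Spec k / \gm]$, so the canonical atlas $\Spec k \to B\gm$ (classifying the trivial $\gm$-torsor) is itself a $\gm$-torsor, hence smooth and in particular flat of relative dimension $1$. The local section $s_i : U_i \to U_i \times B\gm$ picking out the trivial line bundle on $U_i$ is, by construction, the base change of $\Spec k \to B\gm$ along the structure morphism $U_i \to \Spec k$. Since flatness is preserved under base change, each $s_i$ is flat, and in fact smooth. This is the precise sense in which $s^*$ is pullback along a flat map of stacks.

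Having established this, the stated formal properties follow by standard arguments: flat pullback of coherent sheaves commutes with internal $\Hom$ and with arbitrary base change, and these compatibilities glue along the étale cover since the overlap data is controlled by the $\alpha$-cocycle and the pullback--base-change natural transformation is, well, natural. The main technical point I expect to verify is that the local base-change isomorphisms intertwine correctly with the $\alpha$-twisted gluing on both sides, but this should be essentially automatic from the naturality of pullback together with the definition of $\Coh(X,\alpha)$ via $\alpha$-cocycles. With this in hand the proposition follows.
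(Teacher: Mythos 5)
Your proof is correct and follows essentially the same route as the paper: reduce \'etale-locally to the trivial gerbe via descent and base change, and then verify that the section $\Spec k \to \bgm$ is flat. Your justification of that final step---that this section is the canonical atlas, i.e.\ a $\gm$-torsor, hence smooth and in particular flat---is actually cleaner than the paper's, which deduces flatness of $s$ from the flatness of $f$ and of $f\circ s = \mathrm{id}$, a cancellation pattern that is not valid in general.
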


\begin{proof}
Flatness is preserved by descent (see ~\cite[2.2.11(iv))]{ega4}).  Therefore it suffices to show that $s$ is locally a flat morphism. Therefore by base change, it suffices to prove that a section map $s:\Spec k \to \bgm$ is flat. Composing $\Spec k \overset{s}{\to} \bgm \overset{f}{\to} \Spec k$, where $f$ is the structure map, gives the identity map on $\Spec k$. Now, $f$ is flat, since $\bgm$ is a smooth stack over its base $\Spec k$, and the identity map is flat. %Since flatness satisfies the ``$2$ out of $3$ rule,'' this means that 
Therefore $s$ is flat as well. The fact that a flat morphism of stacks commutes with base change and with $\Hom$ is proved in~\cite{hall}.
\end{proof}

\begin{rem}
Lieblich uses the language of characters in proving these equivalences, rather than weights of representations.  His use of characters allows for $G$-gerbes for groups $G\neq \gm$. In this paper, we only need the results for $\gm$-gerbes.
\end{rem}

\begin{rem}
For general $t$, Lieblich shows that the rigidification map induces an equivalence of categories $\Coh^t(\X) \cong \Coh(X,t\cdot \alpha)$. See for instance,~\cite[Section 2.1.3.]{max}. Suppose $\alpha \in H^2(X,\gm)$ is torsion of order $n$.  Then $\Coh(X,t\cdot \alpha)\cong \Coh(X,(t+n)\cdot \alpha)$ for all $t$.  This means that $\Coh(\X)$ has infinitely many graded components which are, in a sense, cyclic of order $n$.
\end{rem}

\begin{lem}
The derived category of $\Coh^t(\X)$, which we call $D^t(\X)$, exists as a faithful triangulated subcategory of $D(\X)$.  Furthermore, all cohomology sheaves in $D^t(\X)$ have weight $t$.%We need to check how this sits inside $D(\X).$
\end{lem}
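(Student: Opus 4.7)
The plan is to exhibit $\Coh^t(\X)$ as a Serre subcategory of $\Coh(\X)$ and then invoke the standard embedding theorem for bounded derived categories of Serre subcategories.

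First I would establish that $\Coh(\X) = \bigoplus_{t \in \Z} \Coh^t(\X)$ as an abelian category. Because $\gm$ is a torus, every finite-dimensional $\gm$-representation splits canonically into a direct sum of its weight components, and $\Ext^1$ between representations of distinct weights vanishes. Working smooth-locally on the lisse-\'etale site of $\X$, where the gerbe trivializes as $U \times \bgm$, a coherent sheaf is the same data as a coherent sheaf on $U$ equipped with a $\gm$-action, and therefore decomposes canonically and finitely as a direct sum of pure-weight coherent sheaves. The weight grading is preserved by pullback along trivializations of $\X$, so these local decompositions glue to a global one on $\X$. In particular $\Coh^t(\X)$ is closed under kernels, cokernels, and extensions inside $\Coh(\X)$, so it is a Serre subcategory and the inclusion is exact.

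Given this, I would set $D^t(\X) := D^b(\Coh^t(\X))$, which is triangulated because $\Coh^t(\X)$ is abelian. The exact inclusion $\Coh^t(\X) \hookrightarrow \Coh(\X)$ induces a triangulated functor $D^t(\X) \to D(\X)$, and the standard fact that, for a Serre subcategory $\mathcal{A}$ of an abelian category $\mathcal{B}$, the natural functor $D^b(\mathcal{A}) \to D^b_{\mathcal{A}}(\mathcal{B})$ is an equivalence onto the full subcategory of complexes with cohomology in $\mathcal{A}$ implies that this functor is fully faithful, hence in particular faithful. The second assertion is then immediate from the description of the essential image: every cohomology sheaf of an object of $D^t(\X)$ is by construction an object of $\Coh^t(\X)$, and hence has weight $t$.

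The main obstacle is the extension-closure property — that an extension of two weight $t$ sheaves on $\X$ is again of pure weight $t$ — which is where linear reductivity of the torus $\gm$ does the real work; the only mildly nontrivial step is checking that the local weight decomposition of a coherent sheaf on $U \times \bgm$ respects descent along the transition data defining $\X$, which it does because the $\gm$-action on $\X$ is intrinsic. Once the direct-sum decomposition $\Coh(\X) = \bigoplus_t \Coh^t(\X)$ is in hand, the assertions about $D^t(\X)$ are formal consequences of the standard theory.
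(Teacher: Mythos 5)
Your argument is correct, but it takes a genuinely different route from the paper. The paper gets abelianness of $\Coh^t(\X)$ from the equivalence $\Coh^t(\X)\cong \Coh(X,t\cdot\alpha)$ and the fact (from C\u ald\u araru) that twisted coherent sheaves form an abelian category; it then asserts the embedding $D^t(\X)\subset D(\X)$ by appealing to functoriality of forming derived categories, and checks faithfulness directly via the injection on $\Hom$'s. You instead work intrinsically on the gerbe: you establish the weight decomposition $\Coh(\X)=\bigoplus_t\Coh^t(\X)$ (using linear reductivity of $\gm$ and the canonicity of the weight splitting to glue the local decompositions), realize $\Coh^t(\X)$ as a Serre subcategory, and deduce the embedding. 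Your route buys more — it gives \emph{full} faithfulness rather than just faithfulness, avoids routing through twisted sheaves entirely, and makes the second assertion (cohomology sheaves have weight $t$) an immediate consequence of exactness of the inclusion rather than a separate argument about quotient representations. The paper's route is shorter and reuses the equivalence it needs anyway in the sequel. Your decomposition is also consistent with the paper's remark that every mixed-weight representation splits into weight spaces.

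One caution: the ``standard fact'' that $D^b(\mathcal{A})\to D^b_{\mathcal{A}}(\mathcal{B})$ is an equivalence for an arbitrary Serre subcategory $\mathcal{A}\subset\mathcal{B}$ is not true in that generality; it requires additional hypotheses (this is why, e.g., $D^b(\Coh)\simeq D^b_{\Coh}(\QCoh)$ is a theorem and not a formality). In your situation this is harmless, because the direct-sum decomposition gives an \emph{exact retraction} $p_t:\Coh(\X)\to\Coh^t(\X)$ of the inclusion, and the vanishing of all $\Ext$'s between distinct weights lets you split any bounded complex, morphism, homotopy, and quasi-isomorphism into its weight components; full faithfulness of $D^b(\Coh^t(\X))\to D(\X)$ then follows directly without invoking the general embedding theorem. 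I would rephrase that step to lean on the retraction rather than on the general statement.
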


\begin{proof}
First, note that since $\Coh^t(\X)\cong \Coh(X,t\cdot \alpha)$, the fact that $\Coh(X,t\cdot \alpha)$ is abelian (see ~\cite{andrei}) implies that $\Coh^t(\X)$ is abelian.  Therefore its derived category exists.  Since taking the derived category of an abelian category is covariant, $\Coh^t(\X) \subset \Coh(\X)$ implies that $D^t(\X)$ is a subcategory of $D(\X)$.  To see that all cohomology sheaves in $D^t(\X)$ have weight $t$, recall that the $\G_m$-action on a quotient representation $A/B$ is inherited from the $\G_m$-action on $A$.  Therefore, the cohomology sheaves of a complex of weight $t$ sheaves themselves have weight $t$.

In order to see that the embedding $\iota: D^t(\X) \hookrightarrow D(\X)$ is faithful, we note that for $\E$ and $\F$ two weight $t$ complexes, $\Hom_{D^t(\X)}(\E,\F) \hookrightarrow \Hom_{D(\X)}(\iota(\E),\iota(\F))$.  
\end{proof}

Since $D(\X)$ may be defined analogously to schemes on the lisse-\'etale site (by taking the homotopy category and inverting quasi-isomorphisms), the equivalence $\Coh^t(\X)\cong \Coh(X,t\cdot \alpha)$ of abelian categories induces an equivalence $D^t(\X) \cong D(X,t\cdot \alpha)$ of triangulated categories. The advantage of working with the larger category $D(\X)$ rather than with $D(X,\alpha)$, is that the former is monoidal while the latter is not. In particular, tensor products in the former are well-defined.

\section{Integral transforms for $\gm$-gerbes}\label{sec:gerbeyintegraltransforms}

\subsection{Existence of integral transforms for gerbes}

To study equivalences of derived categories of $\G_m$-gerbes, we first recall that functors between the derived categories of schemes are produced via integral transforms:

\begin{defn}
\label{defn:integraltransformsforschemes}
Let $X$ and $Y$ be schemes over a field, and let $\K \in D(X \times Y)$.  The integral transform with kernel $\K$ is the morphism of categories
\[
\Phi^\K:D(X) \to \D(Y)
\]
\[
\F \mapsto R\pi_{Y*}(\K \otimes L\pi_X^*(\E)).
\]
\end{defn}

We recall Orlov's representability theorem from \cite{orlov}:

\begin{thm}\label{thm:orlovrepresentability}
For $X$ and $Y$ projective varieties over a field, any fully faithful exact functor $\Psi: D(X) \to D(Y)$ is an integral transform with some kernel.  Moreover, the kernel is determined uniquely by the functor $\Psi$.  
\end{thm}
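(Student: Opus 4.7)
The plan is to produce the kernel $\K$ by extracting ``fibers'' of $\Psi$ over closed points of $X$, globalizing these to an object of $D(X \times Y)$, and then matching $\Phi^\K$ with $\Psi$ on a spanning class. First I would observe that since $X$ and $Y$ are smooth projective, $D(X)$ and $D(Y)$ carry Serre functors $S_X = - \otimes \omega_X[\dim X]$ and $S_Y$, so $D(X)$ is saturated in the sense of Bondal--Kapranov. It follows formally that any fully faithful exact functor $\Psi$ between such categories admits both a left adjoint $\Psi^L$ and a right adjoint $\Psi^R$, and that $\Psi$ has bounded cohomological amplitude. These adjoints provide the comparison morphisms used throughout.

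For each closed point $x \in X$, set $P_x := \Psi(\O_x) \in D(Y)$. A candidate kernel should satisfy $Li_x^*\K \cong P_x$, where $i_x : \{x\} \times Y \hookrightarrow X \times Y$. I would construct $\K$ following Orlov's original approach: resolve structure sheaves by bounded complexes of locally free sheaves, apply $\Psi$, and glue the results over an affine cover of $X$, using the ampleness hypothesis to control amplitude. An alternative conceptual route is the Bondal--Van den Bergh representability theorem, which identifies $D(X \times Y)$ with the category of cohomological functors $D(X)^{op} \to D(Y)$ of finite type; the assignment $F \mapsto \Psi(F)$ is manifestly of this form, and the representing object is $\K$.

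To conclude $\Phi^\K \cong \Psi$ as functors, I would exploit the fact that $\{\O_x : x \in X\}$ is a spanning class for $D(X)$: an object vanishes precisely when its $\Ext$-groups with all shifted structure sheaves of points vanish. By construction, the two functors agree on this class, together with compatible comparison morphisms supplied by the adjunctions. A convolution argument, in which any bounded complex is built iteratively via Postnikov towers from its cohomology sheaves (and these in turn are resolved by sheaves supported at points), extends the isomorphism from the spanning class to all of $D(X)$. Uniqueness of $\K$ follows from essentially the same machinery: two kernels inducing the same functor have canonically isomorphic fibers, and the isomorphisms glue.

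The main obstacle is the construction of $\K$ in the second step: assembling a single coherent object on $X \times Y$ out of the family $\{P_x\}_{x \in X}$, whose members a priori live in unrelated categories, is where all the genuine work lies. Orlov's original gluing requires delicate truncation and amplitude estimates to rule out obstructions to coherence between neighboring fibers, while the Bondal--Van den Bergh route pushes the difficulty into establishing the representability theorem. Once $\K$ is in hand, verifying $\Phi^\K \cong \Psi$ and proving uniqueness are essentially formal consequences of the spanning-class property.
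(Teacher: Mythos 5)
The paper does not prove this statement: it is quoted verbatim from Orlov's 1997 paper and used as a black box, so there is no in-paper argument to compare against. Judged on its own terms, your write-up is a roadmap of the known proof rather than a proof. You correctly identify the two standard routes (Orlov's explicit construction, or Bondal--Van den Bergh representability of cohomological functors of finite type), and your observation that saturation of $D(X)$ supplies the left and right adjoints is genuinely the right way to reduce the paper's hypotheses (``fully faithful exact'') to Orlov's original ones (which assume adjoints exist). But the step you yourself flag as containing ``all the genuine work'' --- assembling the family $\{P_x\}$ into a single object $\K \in D(X\times Y)$ --- is not carried out, and the description you give of it (``glue the results over an affine cover of $X$'') is not what Orlov does. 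His construction resolves $\O_\Delta$ on $X\times X$ by a complex with terms of the form $\O(-i)\boxtimes A_i$ using ampleness, applies $\Psi$ in the second factor, and takes the convolution of the resulting Postnikov system in $D(X\times Y)$; there is no affine-local gluing, and the delicate point is the existence and uniqueness of that convolution, which requires vanishing of certain negative $\Ext$'s that full faithfulness provides.

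A second gap: agreeing with $\Psi$ objectwise on the spanning class $\{\O_x\}$ does not determine a functor, so ``the two functors agree on this class'' is not by itself a usable statement. One must first construct a natural transformation $\Phi^\K \to \Psi$ (this is where the adjoints and the convolution structure are really used) and only then check it is an isomorphism on skyscrapers and shifts thereof; the spanning-class property upgrades an isomorphism \emph{of a given natural transformation} on the class to an isomorphism everywhere. Your phrase ``compatible comparison morphisms supplied by the adjunctions'' points at this but does not supply the transformation. Similarly, uniqueness of the kernel is not a matter of fibrewise isomorphisms gluing --- that is exactly the kind of statement that fails without more care (objectwise isomorphic kernels need not be isomorphic); the actual argument again goes through the convolution construction, and is known to be delicate (it fails for non-fully-faithful functors). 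So the skeleton is right, but the load-bearing steps are all deferred.
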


We will set up a theory of integral transforms for derived categories of $\gm$-gerbes over proper schemes and prove that many facts that hold for integral transforms of schemes also hold for integral transforms of such stacks.\footnote{We make no attempt here to prove a version of Orlov's representability theorem for integral transforms of gerbes, so a priori it is possible that there exist fully faithful exact functors between derived categories of gerbes which are not integral transforms.}  

First, we prove that these integral transforms exist; this is not completely trivial!  Let $\X$ and $\Y$ be $\G_m$-gerbes over proper schemes $X$ and $Y$, respectively.  We would like to consider integral functors $D(\X) \to D(\Y)$, analogously to Definition \ref{defn:integraltransformsforschemes}.  Since $\gm$-gerbes are not proper, it is not immediate that the pushforward along the projection map $\X \times \Y \to \Y$ sends coherent sheaves to coherent sheaves.  We fix this problem using Alper's theory of good moduli spaces (\cite{goodmodulispace}).

\begin{defn}
A morphism of stacks $f: \Y_1 \rightarrow \Y_2$ is called \emph{cohomologically proper} if for any coherent sheaf $\F$ on $\Y_1$ and for all $i\ge 0$, $R^if_{*}\F$ is a coherent sheaf on $\Y_2$.
\end{defn}

\begin{prop}\label{prop:cohomologicallyproper}
The projection map $\X \times \Y \to \Y$ is cohomologically proper.  
\end{prop}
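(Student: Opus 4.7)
The plan is to factor the projection $\pi: \X \times \Y \to \Y$ as
$$
\X \times \Y \xrightarrow{\,g\,} X \times \Y \xrightarrow{\,h\,} \Y,
$$
where $g = \rig_\X \times \mathrm{id}_\Y$ (with $\rig_\X: \X \to X$ the rigidification) and $h$ is the second projection, and then to verify that each factor is cohomologically proper and that cohomological properness is closed under composition.

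For $g$, the key input is Alper's theory of good moduli spaces. Since $\char(k)=0$, the group $\gm$ is linearly reductive, so the rigidification $\rig_\X:\X\to X$ is a good moduli space morphism in the sense of \cite{goodmodulispace}; in particular it is cohomologically affine, and because $\X$ is finitely presented over $X$, the pushforward $(\rig_\X)_*$ sends coherent sheaves to coherent sheaves while $R^i(\rig_\X)_* = 0$ for $i>0$. Concretely, \'etale-locally on $X$ the map $\rig_\X$ has the form $V \times \bgm \to V$, and a coherent sheaf on $V \times \bgm$ decomposes into its weight summands, whose weight-$0$ part is the pushforward to $V$ (cf.~Lemma~\ref{lemma:weighttsection}). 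Since cohomological affineness is preserved under arbitrary base change, the base change $g:\X\times \Y \to X\times \Y$ is cohomologically affine, hence cohomologically proper.

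For $h$, since $X$ is a proper scheme over $k$, $h$ is the base change of the proper morphism $X\to\Spec k$ along $\Y\to\Spec k$. Base changes of proper morphisms are proper, and a representable proper morphism to a Noetherian algebraic stack preserves coherence of pushforwards (the pushforward theorem for proper morphisms of schemes applied smooth-locally on $\Y$). Thus $h$ is cohomologically proper.

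Finally, cohomological properness is closed under composition: the Leray spectral sequence
$$
R^p h_* R^q g_* \F \Rightarrow R^{p+q}\pi_* \F
$$
has coherent $E_2$-terms by the two previous steps, and since coherent sheaves on a Noetherian algebraic stack are closed under extensions, $R^n\pi_* \F$ is coherent for every $n$. The substantive step is the first one: the cohomological affineness of the rigidification is precisely where the char-$0$ hypothesis enters essentially, through the linear reductivity of $\gm$. The rest is formal.
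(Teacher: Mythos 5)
Your proof is correct and follows essentially the same route as the paper: both factor the projection through $X \times \Y$, handle the gerbe direction via Alper's good moduli space theory (reducing to $\bgm \to \Spec k$, where linear reductivity of $\gm$ gives exactness of pushforward) and the scheme direction via properness of $X$, and conclude by composing cohomologically proper morphisms. The only nitpick is that cohomological affineness is \emph{not} preserved under arbitrary base change in general (one needs, e.g., quasi-affine diagonal of the target, cf.\ Alper's Remark 3.11), but this is harmless here since you base change $\rig_\X$ along a map to the scheme $X$; the paper instead dodges this point by passing to a smooth cover $U \to \Y$ so as to base change along an algebraic space.
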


\begin{proof}
We recall the definition of a good moduli space, due to Alper (\cite{goodmodulispace}). Let $\M$ be any algebraic stack and $M$ an algebraic space. A morphism $f:\M \to M$ is a good moduli space map if 
\begin{itemize}
    \item $f_*:\QCoh(\M) \to \QCoh(M)$ is exact, and 
    \item$f_*\O_\M \cong \O_M$.
\end{itemize}
Further, a good moduli space morphism respects coherence when source and target are both locally noetherian, i.e. a good moduli space map is cohomologically proper (~\cite[Thm. 4.17(x)]{goodmodulispace}).

We first claim that if $f:\Y \rightarrow Y$ is a $\G_m$-gerbe, then $f$ is a good moduli space map.  Coherence is a local property, and the property of being a good moduli space is preserved by fpqc base change along an algebraic space (see \cite[Prop. 4.7]{goodmodulispace}).  Therefore we may reduce to showing that the structure map $f:\bgm \to \Spec k$ is a good moduli space map.  

The pushforward $f_*:\Coh(\bgm) \to \Coh(\Spec k)$ is exact because $\gm$ has no group cohomology in dimension larger than $0$. In order to prove that $f_*\O_{\bgm}\cong \O_{\Spec k}$, note first that the structure sheaf of $\bgm$ corresponds to the trivial $\gm$-representation. The pushforward corresponds to taking invariants of the trivial representation, which are $1$-dimensional.  The only $1$-dimensional vector bundle on $\Spec k$ is the structure sheaf, which proves that $f:\bgm \to \Spec k$ is a good moduli space morphism.

Consider the fiber diagram

\medskip
\begin{center}
\begin{tikzcd}
\X \times \Y \arrow[r, "f"] \arrow[d] & X \times \Y \arrow[r, "proper"] \arrow[d, "fpqc"] & \Y \arrow[d, "fpqc"] \\
\X \arrow[r, "gms"]                          & X \arrow[r, "proper"]                                   & \Spec k.                  
\end{tikzcd}
\end{center}
\medskip

The composition of two cohomologically proper morphisms is again cohomologically proper.  Therefore, for $\X \times \Y \rightarrow \Y$ to be cohomologically proper, it is enough to show that the map $f$ in the above diagram is.  We wish to again use the fact that the base change of a good moduli space map along an fpqc morphism from an algebraic space is again a good moduli space map. However, we cannot apply this directly to the diagram above, since the vertical map $X \times \Y \to X$ is not a base change along an algebraic space; instead, it's a base change along an algebraic stack. To solve this issue, choose $U \to \Y$ to be a smooth cover of $\Y$ by a scheme $U$. 
%A good moduli space morphism respects coherence when source and target are both locally noetherian, i.e. coherent sheaves push forward to coherent sheaves (~\cite[Thm. 4.17(x)]{goodmodulispace}), so $X \times B\G_m \to X$ is cohomologically proper.  
We can then extend the commutative diagram to the following:

\medskip
\begin{center}
\begin{tikzcd}
\X \times U \arrow[d] \arrow[r]             & X \times U \arrow[d] \arrow[dd, bend right]      &                            \\
\X \times \Y \arrow[r, "f"] \arrow[d] & X \times \Y \arrow[r, "proper"] \arrow[d, "fpqc"] & \Y \arrow[d, "fpqc"] \\
\X \arrow[r, "gms"]                          & X \arrow[r, "proper"]                                   & \Spec k                  
\end{tikzcd}
\end{center}
\medskip

The composition $X \times U \rightarrow X \times \Y \rightarrow X$ is an fpqc morphism from an algebraic space. Thus by \cite[Prop. 4.7]{goodmodulispace}, $\X \times U \to X \times U$ is a good moduli space morphism. Since all objects involved are locally noetherian, $\X \times U \to X \times U$ is cohomologically proper. Coherence can be checked on a cover, in this case the cover $U \to \Y$, so $f:\X \times \Y \to \Y$ is also cohomologically proper.
\end{proof}

This now allows us to define integral functors on $\G_m$-gerbes over proper schemes.

\begin{cor}
Let $\X$ and $\Y$ be $\G_m$-gerbes over proper schemes and let $D(\X)$ and $D(\Y)$ the bounded derived categories of coherent sheaves on them.  For any $\F \in D(\X \times \Y)$, there is a well-defined functor $\Phi^\F:D(\X) \to D(\Y)$ defined by $\E \mapsto R\pi_{\Y*}(L\pi_\X^*(\E) \otimes \F)$.  That is, integral functors exist for these algebraic stacks.
\end{cor}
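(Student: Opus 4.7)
The corollary is essentially a formal consequence of Proposition~\ref{prop:cohomologicallyproper}. The plan is to observe that the integral functor $\Phi^\F$ decomposes as the three derived operations $L\pi_\X^*$, $-\otimes^L \F$, and $R\pi_{\Y*}$, and to check that each preserves $D^b(\Coh)$ in our setting.

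First I would address the pullback $L\pi_\X^*: D(\X) \to D(\X \times \Y)$. The projection $\pi_\X: \X \times \Y \to \X$ is the base change of the structure morphism $\Y \to \Spec k$. Since $\Y$ is a $\gm$-gerbe over a proper scheme, locally in the smooth topology on $Y$ it has the form $U \times B\gm$, and $B\gm$ is smooth of finite type over $\Spec k$. Consequently $\pi_\X$ has finite Tor-dimension, so $L\pi_\X^*$ sends $D^b(\Coh(\X))$ into $D^b(\Coh(\X\times\Y))$.

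Next I would handle the tensor product $-\otimes^L \F$. Since $\F \in D^b(\Coh(\X \times \Y))$, tensoring a bounded complex of coherent sheaves against $\F$ yields a complex with coherent cohomology, and boundedness is preserved after again appealing to the local smoothness of $B\gm$ over a point (so that finite Tor-dimension is not an issue locally, and coherence and boundedness can be checked on a smooth cover).

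Finally, for the pushforward $R\pi_{\Y*}$, the main input is Proposition~\ref{prop:cohomologicallyproper}, which guarantees that $\pi_\Y$ is cohomologically proper, i.e. sends coherent sheaves to coherent sheaves with coherent higher direct images. Boundedness then follows because $\pi_\Y$ factors as $\X \times \Y \to X \times \Y \to \Y$, where the second map is proper (hence of finite cohomological dimension) and the first is a good moduli space map (as in the proof of Proposition~\ref{prop:cohomologicallyproper}) whose higher direct images vanish outside a finite range because $\gm$ has no higher group cohomology. Composing these three preservation statements shows $\Phi^\F(\E) = R\pi_{\Y*}(L\pi_\X^*(\E) \otimes^L \F)$ lies in $D^b(\Coh(\Y))$, giving the well-defined functor claimed. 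The only step that requires real input is the cohomological properness of $\pi_\Y$, which has already been done; the rest is bookkeeping about Tor-dimension and cohomological amplitude on the smooth stack $B\gm$.
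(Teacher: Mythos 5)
Your proposal is correct and follows essentially the same route as the paper: decompose $\Phi^\F$ into pullback, derived tensor, and pushforward, use smoothness for boundedness of the pullback and tensor, and invoke Proposition~\ref{prop:cohomologicallyproper} for coherence of the pushforward. You supply slightly more detail than the paper on the boundedness of $R\pi_{\Y*}$ (via the factorization through $X\times\Y$), but the key input and the overall structure are identical.
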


\begin{proof}
The usual derived tensor of objects in the derived categories of $\X$ and $\Y$ are well-defined. Since both $\X$ and $\Y$ are smooth, the derived pullback of a bounded complex is bounded, so the ordinary pullback gives a functor on bounded derived categories.  By Proposition \ref{prop:cohomologicallyproper}, the projection map $\pi_\Y$ sends coherent sheaves on $\X \times \Y$ to coherent sheaves on $\Y$, which proves the statement.
\end{proof}

\begin{comment}
We use the projection formula to prove that the composition of integral functors has kernel equal to the convolution of the component kernels, as in the case of coherent sheaves on proper schemes:

\begin{prop}\label{prop:convolution}
Let $\K$, $\L$ be coherent sheaves on $X \times Y$ and $Y \times Z$, respectively.  The $\Phi^\L \circ \Phi^\K=\Phi^{\K *\L}$.
\end{prop}

\begin{proof}
Given $\E\in D(X \times Y)$, one has:
\[
\Phi^\L(\Phi^\K(\E))=\pi_{Z*}(\pi_Y^*[\pi_{Y*}(\pi_X^*\E \otimes \K)] \otimes \L)
\]
\[
\cong \pi_{Z*}(\pi_{Y,Z*}(\pi_{XY}^*(\pi_X^*\E \otimes \K)) \otimes \L)
\]
\[
\cong \pi_{Z*}(\pi_{Y,Z*}(\pi_{XY}^*(\pi_X^*\E \otimes \K) \otimes \pi_{YZ}^*\L))
\]
\[
\cong \pi_{Z*}\pi_{XZ,*}(\pi_{XZ}^*\pi_X^*\E \otimes \pi_{XY}^*\K \otimes \pi_{YZ}^*\L
\]
\[
\cong \pi_{Z,*}(\pi_X^*\E \otimes (\L \otimes \K))
\]
\[
= \Phi^{\K * \L}(\E).
\]
The isomorphism in the second line is cohomology and flat base change, which holds for these algebraic stacks by work of Hall in ~\cite{hall}.  The isomorphisms in the 3rd and 5th lines are the projection formula for stacks, proved in ~\cite{hallrydh}.  The 4th line is an isomorphism by the identity $\pi_X \circ \pi_{XY}=\pi_X \circ \pi_{XZ}$ and $\pi_Z \circ \pi_{XZ}= \pi_Z \circ \pi_{YZ}$.
\end{proof}

\end{comment}

\subsection{Adjoints, equivalences and duality}

We would like to apply the following theorem of Bridgeland in ~\cite{bridgeland}:

\begin{thm}\label{thm:bridgeland}
Let $\mathcal{A}$ and $\mathcal{B}$ be triangulated categories and $F:\mathcal{A} \to \mathcal{B}$ a fully faithful exact functor.  Suppose $\mathcal{B}$ is indecomposable, and not every object of $\mathcal{A}$ is isomorphic to 0.  Then $F$ is an equivalence of categories if and only if $F$ has a left adjoint $G$ and a right adjoint $H$ such that for any object $b$ of $\mathcal{B}$,
\[
Hb \cong 0 \implies Gb \cong 0.
\]
\end{thm}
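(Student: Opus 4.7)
The plan is to prove the nontrivial ``if'' direction by exhibiting $\mathcal{B}$ as a genuine direct-sum decomposition of two orthogonal thick triangulated subcategories, and then invoking indecomposability. The ``only if'' direction is routine: when $F$ is an equivalence both adjoints agree with $F^{-1}$, so $Hb \cong 0$, $b \cong 0$, and $Gb \cong 0$ are mutually equivalent.

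For the substantive direction, recall that $F$ being fully faithful is equivalent to the unit $\eta: \mathrm{id}_{\mathcal{A}} \to HF$ and the counit $GF \to \mathrm{id}_{\mathcal{A}}$ both being isomorphisms. For each $b \in \mathcal{B}$, complete the counit $\epsilon_b: FHb \to b$ of the $F \dashv H$ adjunction to a distinguished triangle
$$FHb \xrightarrow{\epsilon_b} b \to C_b \to FHb[1].$$
Applying $H$ and using the triangle identity $H\epsilon \circ \eta H = \mathrm{id}_H$, one sees that $H\epsilon_b$ is an isomorphism (because $\eta$ is), so $HC_b \cong 0$. The hypothesis of the theorem then forces $GC_b \cong 0$.

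Now define $\mathcal{B}_1 \subset \mathcal{B}$ to be the essential image of $F$ (equivalently, the full subcategory of $b$ with $\epsilon_b$ an isomorphism) and $\mathcal{B}_2 = \{b \in \mathcal{B} : Hb \cong 0\}$. The triangle above writes every object of $\mathcal{B}$ as an extension with $FHb \in \mathcal{B}_1$ and $C_b \in \mathcal{B}_2$. Adjunction gives $\Hom(Fa, c[n]) = \Hom(a, Hc[n]) = 0$ for $a \in \mathcal{A}$, $c \in \mathcal{B}_2$, $n \in \Z$; and — this is where the hypothesis of the theorem enters — $\Hom(c, Fa[n]) = \Hom(Gc, a[n]) = 0$ because $Gc \cong 0$. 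Thus $\mathcal{B}_1$ and $\mathcal{B}_2$ are mutually orthogonal in both directions, so the triangle splits and $b \cong FHb \oplus C_b$ canonically. This realizes $\mathcal{B}$ as an internal direct sum of the two triangulated subcategories.

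Finally, indecomposability of $\mathcal{B}$ forces one summand to vanish. Since $\mathcal{A}$ contains a nonzero object and $F$ is fully faithful, $\mathcal{B}_1 \neq 0$, so $\mathcal{B}_2 = 0$. Hence $C_b \cong 0$ for every $b$, i.e.\ $FHb \cong b$, and $F$ is essentially surjective. The step I expect to be the main obstacle — and the one that justifies the precise form of the hypothesis — is promoting the \emph{semiorthogonal} decomposition one always gets from the counit triangle to a genuine two-sided orthogonal direct sum. Without the implication $Hb \cong 0 \Rightarrow Gb \cong 0$, one only has vanishing of $\Hom(\mathcal{B}_1, \mathcal{B}_2)$ and not of $\Hom(\mathcal{B}_2, \mathcal{B}_1)$, which is compatible with $\mathcal{B}$ being indecomposable and so gives no contradiction.
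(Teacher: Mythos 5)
Your argument is correct and is essentially Bridgeland's original proof of this result: the paper itself cites \cite{bridgeland} without reproving the theorem, and your counit triangle $FHb \to b \to C_b$, the deduction $HC_b \cong 0$ (hence $GC_b \cong 0$), the two-sided orthogonality of the essential image of $F$ against $\{b : Hb \cong 0\}$ via the two adjunctions, and the appeal to indecomposability are exactly the steps in that source. Your closing remark correctly identifies the role of the hypothesis $Hb \cong 0 \Rightarrow Gb \cong 0$ in upgrading the semiorthogonal decomposition to an orthogonal one.
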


Bridgeland uses this theorem to prove that any fully faithful integral functor $D(X) \to D(Y)$ is an equivalence when $X$ and $Y$ are Calabi-Yau varieties. The result above is purely in the language of triangulated categories, and we may therefore apply it to prove that any fully faithful integral functor between the derived categories of two gerbes over Calabi-Yau varieties is an equivalence.  To apply  
 Theorem~\ref{thm:bridgeland}, we must show that any integral functor admits left and right adjoints and that those adjoints satisfy the hypotheses of the theorem.  Producing these adjoints will require a gerbey version of Verdier duality to get a left adjoint to $\pi^*$ and a right adjoint to $\pi_*$.

Recall the classical statement of local Verdier duality:

\begin{prop}\label{prop:verdierdualityforschemes}
Let $f:X \to Y$ be a proper map of schemes.  For any sheaves $F$ and $G$ on $X$, there is an isomorphism 
\[
R\Hom(Rf_*F,G) \cong Rf_*R\Hom(F,f^*G\otimes \omega_Y[\dim Y])
\]
of objects in the derived category of $Y$.
\end{prop}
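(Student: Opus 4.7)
Since this is the classical Grothendieck--Verdier duality for a proper morphism, the plan is to reduce the statement to the existence and explicit description of a twisted inverse image functor $f^!$, rather than to construct everything from scratch.

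First, I would invoke Grothendieck duality: for any proper morphism of noetherian schemes, the derived pushforward $Rf_*\colon D(X)\to D(Y)$ admits a right adjoint $f^!\colon D(Y)\to D(X)$ on bounded derived categories of coherent sheaves. This is the main content of Hartshorne's \emph{Residues and Duality}, and can also be obtained via Nagata compactification together with Brown representability, as in Neeman. Sheafifying the $(Rf_*, f^!)$-adjunction and using that $Rf_*$ commutes appropriately with $R\Hom$ for a proper map gives
\[
R\Hom_Y(Rf_* F, G) \;\cong\; Rf_* R\Hom_X(F, f^! G).
\]

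Second, I would specialize to the setting relevant here (smooth schemes, or more generally morphisms admitting a relative dualizing complex), where $f^!$ has the explicit form
\[
f^! G \;\simeq\; f^* G \otimes \omega_{X/Y}[\dim X - \dim Y].
\]
Substituting this into the previous display and rewriting the relative dualizing sheaf in terms of $\omega_X$, $\omega_Y$, and the shift conventions used in the statement produces the formula claimed. The comparison with the written form $f^*G \otimes \omega_Y[\dim Y]$ then amounts to unwinding the paper's normalization of the dualizing complex.

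The main obstacle is the construction of $f^!$ itself, which is the substantive content of Grothendieck--Verdier duality and is typically cited rather than reproduced; the remaining steps are formal manipulations of the $(Rf_*, f^!)$-adjunction and the projection formula. For the gerbey extension appearing later in this section, the real work will be to establish the stacky analogues of these ingredients---cohomological properness of $\pi_{\Y}\colon \X \times \Y \to \Y$ (already handled in Proposition~\ref{prop:cohomologicallyproper}), existence of $f^!$ in the stacky setting, and an explicit description of the relative dualizing complex for a $\gm$-gerbe---rather than in this classical recollection.
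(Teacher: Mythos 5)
Your proposal is correct and is consistent with the paper, which states this proposition purely as a recollection of classical Grothendieck--Verdier duality and offers no proof of its own; reducing to the $(Rf_*,f^!)$ adjunction together with the explicit description $f^!G\simeq f^*G\otimes\omega_{X/Y}[\dim X-\dim Y]$ for morphisms admitting a relative dualizing complex is exactly the standard route one would cite. You are also right to flag the normalization issue: as printed the statement needs $G$ to be a sheaf on $Y$ (so that $R\Hom(Rf_*F,G)$ lives on $Y$) and the twist should be by the relative dualizing object of $f$ (e.g.\ $\omega_X[\dim X]$ when $Y=\Spec k$), not $\omega_Y[\dim Y]$ --- a discrepancy the paper silently absorbs when it later defines $\dotr{\omega_\Y}$ for the projection $\X\times\Y\to\X$, where the relative dualizing object is indeed pulled back from the fiber factor.
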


The object $\omega_Y[\dim Y]$ in $D(Y)$ is called the dualizing object on $Y$.  It satisfies the property that $f^*(-)\otimes \omega_Y[\dim Y]$ is a left adjoint to $f^*$.  To state and prove Verdier duality for gerbes, we must first produce a candidate for the dualizing object.

\begin{defn}
Let $\Y \to Y$ a $\gm$-gerbe with $Y$ a proper scheme.  Then $\dotr{\omega_\Y}:=f_0^*(\omega_Y[\dim Y])$ will be called the dualizing object of $\Y$.  That is, $\dotr{\omega_\Y}$ is the pullback of the dualizing object on $Y$ under the weight $0$ section map.  $\dotr{\omega_\Y}$ is an object in $D(\Y)$ equipped with the trivial (i.e., weight zero) $\gm$ action.
\end{defn}

There is an equivalence between $D^0(\Y)$ and $D(Y)$ by Proposition~\ref{prop:equivofcats}.  One may think of $\dotr{\omega_\Y}$ as being the image of $\omega_Y[\dim Y]$ under this equivalence.  By a previous remark, $\dotr{\omega_\Y}$ is concentrated in a single degree, since it is the image under $f_0^*$ of a sheaf concentrated in a single degree in $D(Y)$.  

\begin{prop}\label{lem:verdierduality}
Let $\X$ and $\Y$ be $\G_m$-gerbes over proper schemes $X$ and $Y$ and let $\pi_\X:\X \times \Y \to \X$ the projection map.  Let $\F$ be a coherent sheaf on $\X\times \Y$ and $\mathcal{G}$ a coherent sheaf on $\X$.  Then there is an isomorphism 
\[
R\pi_{\X*}(R\Hom_{\X\times \Y}(\F,\pi_\X^*\mathcal{G}\otimes \dotr{\omega_\Y})) \cong R\Hom_\X(\pi_{\X*}\F,\mathcal{G})
\]
as elements of the derived category of $\X$.
\end{prop}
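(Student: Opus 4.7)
The plan is to reduce the statement to classical Verdier duality by factoring $\pi_\X: \X \times \Y \to \X$ through the rigidification in the $\Y$-factor and exploiting the weight decomposition of coherent sheaves on a $\G_m$-gerbe.

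First, I would decompose $\F = \bigoplus_t \F^t$ into its weight components with respect to the $\G_m$-inertia in the $\Y$-factor; since $\F$ is coherent (hence locally a finite direct sum of weight representations), only finitely many $\F^t$ are nonzero, and every functor involved is additive, so each weight component can be handled separately. Factoring $\pi_\X$ as
\[
\X \times \Y \xrightarrow{r} \X \times Y \xrightarrow{p} \X,
\]
the map $r$ is the $\Y$-rigidification, whose pushforward takes $\G_m$-invariants on the $\Y$-inertia and hence annihilates weight-$t$ sheaves for $t \neq 0$ (in characteristic $0$ there are no higher derived functors to worry about). Therefore $R\pi_{\X*}\F = R\pi_{\X*}\F^0$. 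On the left-hand side, $\pi_\X^*\G$ and the pullback of $\dotr{\omega_\Y}$ both have $\Y$-weight zero, so $R\Hom(\F^t, \pi_\X^*\G \otimes \pi_\Y^*\dotr{\omega_\Y})$ has $\Y$-weight $-t$ and is killed by $R\pi_{\X*}$ for $t \neq 0$. Thus both sides depend only on $\F^0$.

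Next, by Proposition~\ref{prop:equivofcats} applied to $r$, the weight-zero sheaf $\F^0$ descends to a coherent sheaf $\widetilde{\F}$ on $\X \times Y$ with $\F^0 \cong r^*\widetilde{\F}$. Since $r$ is a $B\G_m$-gerbe, $Rr_*\O_{\X\times\Y} \cong \O_{\X\times Y}$, and the projection formula gives $Rr_* r^*\H \cong \H$ for $\H$ on $\X \times Y$; in particular $R\pi_{\X*}\F = Rp_*\widetilde{\F}$. Similarly, $\pi_\Y^*\dotr{\omega_\Y} \cong r^*(\pi_Y^*\omega_Y[\dim Y])$, where $\pi_Y^*\omega_Y[\dim Y]$ is the relative dualizing complex $\omega_p$ of the proper representable morphism $p$. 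By Proposition~\ref{prop:sisflat}, $r$ is flat, so pullback along $r$ commutes with $R\Hom$. Combining these observations rewrites the left-hand side as $Rp_* R\Hom_{\X\times Y}(\widetilde{\F}, p^*\G \otimes \omega_p)$.

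Finally, I would apply Verdier duality to the representable proper morphism $p: \X \times Y \to \X$, which is obtained as the base change of the proper scheme morphism $Y \to \Spec k$ along $\X \to \Spec k$. Because $p$ is representable by a proper scheme morphism, Verdier duality follows by étale-local descent from the classical statement: pick a smooth cover $U \to \X$, apply scheme-theoretic Verdier duality to $U \times Y \to U$, and descend using flat base change (which is available for these algebraic stacks). This yields
\[
Rp_* R\Hom_{\X\times Y}(\widetilde{\F}, p^*\G \otimes \omega_p) \cong R\Hom_\X(Rp_*\widetilde{\F}, \G),
\]
and combining with $Rp_*\widetilde{\F} = R\pi_{\X*}\F$ gives the desired isomorphism. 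The main obstacle is isolating and invoking Verdier duality for a proper representable morphism of algebraic stacks with target a $\G_m$-gerbe; the remainder is careful bookkeeping with weight decompositions, flat pullback, and the projection formula.
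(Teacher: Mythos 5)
Your proof is correct in outline, but it takes a genuinely different route from the paper's. The paper rigidifies \emph{both} gerbe factors at once: it reduces to $\F$ and $\mathcal{G}$ of pure weights $t$ and $t'$, checks that both sides of the asserted isomorphism carry $\gm$-weight $t'-t$, pushes everything down to the coarse space $X\times Y\to X$ via the local-section functor of Propositions~\ref{prop:equivofcats} and~\ref{prop:sisflat}, and then invokes C\u ald\u araru's Verdier duality for \emph{twisted} sheaves on schemes, transporting the isomorphism back through the equivalence $D^{t'-t}(\X)\cong D(X,(t'-t)\cdot\alpha)$. You instead rigidify only the $\Y$-factor, the one along which the pushforward happens: you observe that $R\pi_{\X*}$ kills every nonzero $\Y$-weight piece of $\F$ and of the $\Hom$-complex, so both sides depend only on $\F^0$, which descends to $\X\times Y$; this reduces the statement to Verdier duality for the representable proper morphism $p:\X\times Y\to\X$, handled by smooth-local descent from the scheme case. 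Your route avoids twisted sheaves and the appeal to C\u ald\u araru entirely, and never needs to track the $\X$-weight. Its price is concentrated in the final step: ``descend using flat base change'' hides the one real piece of work, since local duality isomorphisms do not automatically glue. You should construct the trace map $Rp_*\omega_p\to\O_\X$ globally (it descends because the local trace maps are canonical and compatible with smooth base change) and then verify that the induced duality morphism is an isomorphism smooth-locally. Two smaller bookkeeping points: the weight-$0$ descent you cite is the $t=0$ instance of Lieblich's equivalence $\Coh^t(\X)\cong\Coh(X,t\cdot\alpha)$ rather than Proposition~\ref{prop:equivofcats} verbatim, and the identity $Rr_*r^*\cong\mathrm{id}$ rests on the good moduli space property of the rigidification established in Proposition~\ref{prop:cohomologicallyproper} together with the projection formula of Hall--Rydh. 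With those points made explicit, the argument is complete.
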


\begin{proof}

The idea is to reduce to the case of local Verdier duality on $X$ using Proposition \ref{prop:equivofcats}. 

First we must show that the left hand side and right hand side have the same $\gm$-action, since to reduce to the case of local Verdier duality on $X$, we need that the images of both sides under $s^*$ are twisted by the same Brauer class.  

We first reduce to the case that the sheaves $\F$ and $\mathcal{G}$ are of pure weight.  If they are not, then as $\gm$-representations (or equivalently, locally as sheaves) they may be decomposed into the direct sum of their weight spaces.  The statement we are proving is local, and the tensor product and $\Hom$ respect the $\gm$ action.  If each of the direct summands on the left hand and right hand sides are isomorphic, then the whole sheaves are isomorphic.  Therefore we may consider each weight space separately in proving the statement.

Weight of representations is additive on tensor products, and $\dotr{\omega}_\Y$ is defined to be of weight zero.  Therefore the weight of the $\gm$ action on $\pi_\X^*\mathcal{G} \otimes \dotr{\omega}_\Y$ is the same as the weight of the $\gm$ action on $\pi_\X^*\mathcal{G}$.  

Consider the weight of the $\gm$ action on $R\Hom_{\X\times \Y}(\F,\pi_\X^*\mathcal{G}\otimes \dotr{\omega_\Y})\mid_{\{x\}\times \Y}$, since this will give the weight of the action on $R\pi_{\X*}(R\Hom_{\X\times \Y}(\F,\pi_\X^*\mathcal{G}\otimes \dotr{\omega_\Y}))$ restricted to a point $x\in \X$.  Now if $\mathcal{G}$ has weight $t'$ on $\X$, then $\pi_\X^*\mathcal{G}|_{\{x\}\times \Y}$ has weight $t'$ for any point $x\in \X$.  Therefore if $\F\mid_{\{x\}\times \Y}$ has weight $t$, then $R\pi_{\X*}(R\Hom_{\X\times \Y}(\F,\pi_\X^*\mathcal{G}\otimes \dotr{\omega_\Y})$ has weight $t'-t$.  

Similarly, we can consider the weight of the right hand side $R\Hom_\X(\pi_{\X*}\F,\mathcal{G})$.  After restricting to any point $x\in \X$, we see that the weight of $\pi_{\X*}\F|_{\{x\}}$ is the same as the weight of $\F|_{\{x\}}\times \Y$.  If this weight is $t$ and that of $\mathcal{G}$ is $t'$, as above, then the weight of $R\Hom_\X(\pi_{\X*}\F,\mathcal{G})$ is $t'-t$, as before.

Since the weights of the $\gm$ actions on the left and right hand sides are both equal to $t'-t$, the pullback $s^*$ gives sheaves on $X$ twisted by the same Brauer class, $(t'-t)\cdot \alpha$.  That is, there is a map $s^*:D^{t'-t}(\X) \to D(X,(t'-t)\cdot \alpha)$ that sends the left and right hand sides of the equation to elements of $D(X,(t'-t)\cdot \alpha)$.

Let $F:=s^*(\F)$ and $G:=s^*(\mathcal{G})$ be the images of $\F$ and $\mathcal{G}$ on $X$.  The map $s^*$ is a flat base change by~\ref{prop:sisflat}, and $\Hom$ commutes with flat base change on algebraic stacks by~\cite{hall}.  Therefore 
\[
R\Hom_\X(s^*(\pi_{\X*}\F),s^*(\mathcal{G}))\cong s^*(R\Hom_\X(\pi_{\X*}\F,\mathcal{G}))
\]
and
\[
s^*(R\pi_{\X*}(R\Hom_{\X\times \Y}(\F,\pi_\X^*\mathcal{G}\otimes \dotr{\omega_\Y})) \cong R\pi_{X*}(R\Hom_{X\times \Y}(s^*(\F),s^*(\pi_\X^*\mathcal{G}\otimes \dotr{\omega_\Y}))).
\]
By local Verdier duality for twisted sheaves on $X$ (for a proof of this duality, see ~\cite{andrei}), there is an isomorphism of $(t'-t)\cdot \alpha$-twisted sheaves 
\[
R\pi_{X*}(R\Hom_{X \times Y}(F,\pi_X^*G \otimes \omega_Y[\dim Y]) \cong R\Hom_X(\pi_X^*F,G).
\]
An equivalence of categories sends isomorphisms to isomorphisms, so by Proposition~\ref{prop:equivofcats}, the desired isomorphism of elements of the derived category of $\X$ holds.

\end{proof}

\begin{cor}
There is an isomorphism of abelian groups
\[
R\Hom_{\X\times \Y}(\F,\pi_\X^*\mathcal{G}\otimes \dotr{\omega_\Y}) \cong R\Hom_\X(\pi_{\X*}\F,\mathcal{G}).
\]
\end{cor}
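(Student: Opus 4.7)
The corollary is essentially the ``global'' version of the ``local'' duality just proven in Proposition~\ref{lem:verdierduality}: the proposition is a statement about isomorphism of objects in $D(\X)$, and the corollary is obtained by taking derived global sections of both sides. My plan is therefore to apply $R\Gamma(\X,-)$ to the isomorphism
\[
R\pi_{\X*}\bigl(R\Hom_{\X\times \Y}(\F,\pi_\X^*\mathcal{G}\otimes \dotr{\omega_\Y})\bigr) \cong R\Hom_\X(\pi_{\X*}\F,\mathcal{G})
\]
of Proposition~\ref{lem:verdierduality} and identify each side with the desired expression.

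For the right hand side, taking derived global sections of the sheaf $R\Hom_\X(\pi_{\X*}\F,\mathcal{G})$ yields $R\Hom_\X(\pi_{\X*}\F,\mathcal{G})$ as an abelian group; this is simply the fact that the derived functor $R\Gamma \circ R\mathcal{H}om$ computes $R\Hom$, which is valid on the lisse-\'etale site of $\X$ since the categories involved have enough injectives (and $\X$ is an algebraic stack where this formalism has been set up, e.g.\ as in the references to Hall cited above). For the left hand side I will use the Grothendieck spectral sequence / Leray composition of derived functors: since $\Gamma_\X \circ \pi_{\X*} = \Gamma_{\X\times \Y}$ as functors on sheaves, and since $\pi_{\X*}$ sends injectives to $\Gamma_\X$-acyclics (and we have established cohomological properness of $\pi_\X$ in Proposition~\ref{prop:cohomologicallyproper} so that the pushforward makes sense on coherent sheaves), one gets
\[
R\Gamma\bigl(\X, R\pi_{\X*}(-)\bigr) \cong R\Gamma(\X\times \Y, -),
\]
and applying this to $R\Hom_{\X\times\Y}(\F,\pi_\X^*\mathcal{G}\otimes \dotr{\omega_\Y})$ gives $R\Hom_{\X\times \Y}(\F,\pi_\X^*\mathcal{G}\otimes \dotr{\omega_\Y})$, again using that $R\Gamma\circ R\mathcal{H}om=R\Hom$.

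There is no real obstacle here beyond checking that the standard sheaf-theoretic machinery (composition of derived functors, identification of $R\Gamma$ of $R\mathcal{H}om$ with $R\Hom$) is available in the setting of $\gm$-gerbes over proper schemes on the lisse-\'etale site. This is ensured by the fact that our stacks are algebraic and by the cohomological properness established in Proposition~\ref{prop:cohomologicallyproper}, which guarantees the derived functors in question are well behaved on the bounded derived category of coherent sheaves. The writeup will therefore be quite short: invoke Proposition~\ref{lem:verdierduality}, apply $R\Gamma(\X,-)$, and identify the two sides via the Leray spectral sequence and the global-sections-of-$R\mathcal{H}om$ identification.
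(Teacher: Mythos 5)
Your proposal is correct and matches the paper's argument, which simply says to take global sections of both sides of the isomorphism in Proposition~\ref{lem:verdierduality}; you have spelled out the standard identifications ($R\Gamma\circ R\pi_{\X*}\cong R\Gamma_{\X\times\Y}$ and $R\Gamma\circ R\mathcal{H}om\cong R\Hom$) that the paper leaves implicit. No further changes are needed.
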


\begin{proof}
Take global sections on each side of the isomorphism in Proposition~\ref{lem:verdierduality}.
\end{proof}

Proposition~\ref{lem:verdierduality} gives a right adjoint to the pushforward functor $R\pi_*$, namely $\pi^*(-)\otimes \dotr{\omega}_\Y$. Essentially the same argument will produce a left adjoint to the pullback functor $\pi^*$ which is given by $R\pi_*(-)\otimes \dotr{\omega_\X}$.

We now recall the structure of the argument from ~\cite{bridgeland}.  We want to apply Theorem ~\ref{thm:bridgeland} to conclude that any fully faithful integral functor between $\G_m$-gerbes is an equivalence.  To apply this theorem, we must produce left and right adjoints for $\Phi^\K$ for any kernel $\K$ and prove that they satisfy the hypotheses of Theorem ~\ref{thm:bridgeland}.  The previous lemma produces such adjoints:

\begin{lem}\label{lem:adjoints}
Let $F:D(\X) \to D(\Y)$ be an integral functor with kernel $\K$.  Then $F$ admits a left adjoint 
\[
H=\Phi^{\K^\vee \otimes \pi_\Y^*\omega_\Y[\dim Y]}:D(\Y) \to D(\X)
\]
and a right adjoint 
\[
G=\Phi^{\K^\vee \otimes \pi_\X^*\omega_\X[\dim X]}:D(\Y) \to D(\X).
\]
\end{lem}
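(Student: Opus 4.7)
The plan is to obtain both adjunctions through a standard chain of canonical isomorphisms, with the Verdier duality of Proposition~\ref{lem:verdierduality} as the key input. The required ingredients at the stack level are: (i) Verdier duality for the projections $\pi_\X$ and $\pi_\Y$, (ii) the tensor--Hom adjunction on $\X \times \Y$, (iii) the ordinary pullback--pushforward adjunctions $\pi_\X^* \dashv R\pi_{\X*}$ and $\pi_\Y^* \dashv R\pi_{\Y*}$, and (iv) the projection formula for algebraic stacks, which in this setting is provided by Hall--Rydh and was already used in the proof of Proposition~\ref{lem:verdierduality}.

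For the right adjoint $G$, I would fix $\E \in D(\X)$ and $\F \in D(\Y)$ and chase a sequence of isomorphisms starting from $\Hom_\Y(\Phi^\K(\E), \F)$. First apply the version of Proposition~\ref{lem:verdierduality} with the roles of $\X$ and $\Y$ interchanged (whose proof is identical) to move $R\pi_{\Y*}$ inside the outer Hom, producing $\Hom_{\X\times\Y}(L\pi_\X^*\E \otimes \K,\; \pi_\Y^*\F \otimes \dotr{\omega_\X})$. Next apply tensor--Hom to transfer $\K$ to the right entry as $\K^\vee$. Then invoke the pullback--pushforward adjunction for $\pi_\X$ to pull $\E$ out as the left entry of a Hom on $\X$. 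Finally, use the projection formula together with the identification $\dotr{\omega_\X} \cong \pi_\X^*\omega_\X[\dim X]$ on $\X\times\Y$ (which holds because the gerbe structure map is smooth and weight-zero) to assemble the kernel in the claimed form $\K^\vee \otimes \pi_\X^*\omega_\X[\dim X]$, giving $\Hom_\X(\E, G(\F))$. Naturality of each step upgrades this to an adjunction. The construction of the left adjoint $H$ is entirely symmetric: one uses Proposition~\ref{lem:verdierduality} directly for $\pi_\X$ instead of the swapped version for $\pi_\Y$.

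The main obstacle is conceptual rather than computational: one must verify that each of the four classical ingredients really holds for $\gm$-gerbes over proper schemes, rather than just for schemes themselves. Verdier duality has just been proved in Proposition~\ref{lem:verdierduality}; the projection formula for algebraic stacks is the result of Hall--Rydh; tensor--Hom is formal on any ringed topos; and pullback--pushforward adjunction is standard for morphisms of algebraic stacks. With all four in hand, the argument becomes a routine chain of canonical isomorphisms and requires no further input.
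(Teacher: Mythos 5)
Your proposal is correct and follows essentially the same route as the paper: decompose $\Phi^\K$ into $L\pi_\X^*$, $-\otimes\K$, and $R\pi_{\Y*}$, and obtain the two adjoints by composing the adjoints of each piece, with Proposition~\ref{lem:verdierduality} supplying the dualizing-object twists; your Hom-chain is just the unfolded form of that composition. The only (welcome) extra care in your version is the explicit identification $\dotr{\omega_\X}\cong\pi_\X^*\omega_\X[\dim X]$ and the remark that the composed adjoint must be recognized as an integral transform with the stated kernel, which the paper leaves implicit.
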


\begin{proof}
To produce an adjoint to a composition of functors, it suffices to compose the adjoints of each of the functors involved.  A left adjoint to pullback and right adjoint to pushforward are provided by Lemma~\ref{lem:verdierduality}.  From ~\cite[07BE]{stacks}, we have that $L\pi^*$ and $R\pi_*$ are an adjoint pair in $D(\X)$.  Furthermore, $-\otimes E$ and $-\otimes E^\vee$ are an adjoint pair for $E$ any object in the derived category.  Therefore the statement of the lemma follows by composing the left and right adjoints of all the functors involved in the integral transform and applying Lemma ~\ref{lem:verdierduality}.  
\end{proof}

\begin{defn}
\label{defn:canonicalsheaf}
The canonical sheaf on $\X$ is the pullback of $\omega_X$ along the good moduli space map $\X \to X$.  We say that a $\G_m$-gerbe is Calabi-Yau if its coarse space is Calabi-Yau.
\end{defn}

Define $\omega_\X$ to be the image of $\omega_X$ under the weight $0$ section map.  Since $\dotr{\omega}_X=\omega_X[n]$ and pullback preserves cohomological degree, $\dotr{\omega}_\X\cong \omega_\X[n]$.  Since structure sheaf pulls back to structure sheaf, we note that if $\X$ is Calabi-Yau, then $\omega_\X=\O_\X$, justifying our calling $\X$ a Calabi-Yau stack.  Note that $\dotr{\omega_\X}= \omega_{\X}[\dim X]$ since cohomology commutes with base change along $\X \to X$, and in particular the pullback preserves homological degree.  

Now we conclude that a fully faithful integral functor on these gerbes is an equivalence whenever the coarse spaces have trivial canonical bundle:  

\begin{prop}\label{prop:calabiyauequiv}
Suppose $F:D(\X) \to D(\Y)$ is a fully faithful integral functor.  Then $F$ is an equivalence if and only if $F\O_x \otimes \omega_\Y\cong F\O_x$ for all $x\in \X$, where $\omega_\Y$ is the canonical sheaf of $\Y$ as in Definition \ref{defn:canonicalsheaf}. 
\end{prop}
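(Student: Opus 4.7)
The plan is to apply Bridgeland's Theorem~\ref{thm:bridgeland} with $\mathcal{A} = D(\X)$ and $\mathcal{B} = D(\Y)$; connectedness of $\Y$ gives indecomposability of $D(\Y)$, and $D(\X)$ contains nonzero objects, so the hypotheses are met. The left adjoint $H$ and right adjoint $G$ of $F$ are supplied by Lemma~\ref{lem:adjoints}, so the only thing left to check is the implication $Hb \cong 0 \implies Gb \cong 0$ for every $b \in D(\Y)$. Once this is established, Bridgeland's criterion immediately yields that $F$ is an equivalence.

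For this main step, I would first observe that the skyscraper sheaves $\{\O_x\}_{x \in \X}$ form a spanning class in $D(\X)$; this can be checked weight by weight, reducing via Proposition~\ref{prop:equivofcats} and the remarks following it to the standard fact for (twisted) derived categories of the coarse space. By adjunction, $Hb \cong 0$ is equivalent to the vanishing of $\Hom_\Y(b, F\O_x[i])$ for all $x, i$, while $Gb \cong 0$ is equivalent to the vanishing of $\Hom_\Y(F\O_x, b[i])$ for all $x, i$. Serre duality on $\Y$, a consequence of the gerbey Verdier duality in Proposition~\ref{lem:verdierduality} applied to the structure morphism $\Y \to \Spec k$, gives
\[
\Hom_\Y\bigl(b, F\O_x[i]\bigr) \cong \Hom_\Y\bigl(F\O_x, b \otimes \omega_\Y[\dim Y - i]\bigr)^\vee.
\]
Using the hypothesis $F\O_x \otimes \omega_\Y \cong F\O_x$ (equivalently $F\O_x \otimes \omega_\Y^{-1} \cong F\O_x$) to absorb the twist by $\omega_\Y$, the two vanishing conditions coincide after reindexing in $i$. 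Hence $Hb \cong 0$ if and only if $Gb \cong 0$, which is more than enough for Bridgeland's criterion. For the converse direction, any equivalence intertwines Serre functors, so $F(\O_x \otimes \omega_\X[\dim X]) \cong F(\O_x) \otimes \omega_\Y[\dim Y]$; since $\omega_\X$ acts trivially on the skyscraper $\O_x$ and an equivalence preserves the dimension of the coarse space, the shifts cancel and we recover $F\O_x \otimes \omega_\Y \cong F\O_x$.

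The principal technical hurdle I anticipate is establishing Serre duality on $\Y$ for objects of possibly mixed weight. The strategy is to decompose into weight components and reduce to Serre duality for twisted sheaves on the coarse space, mirroring the argument already given in the proof of Proposition~\ref{lem:verdierduality}; one simply needs to track weights carefully through the duality pairing so that the weight of $F\O_x \otimes \omega_\Y$ matches that of $F\O_x$, which is automatic since $\omega_\Y$ has weight zero by construction. The spanning-class claim, by contrast, is essentially bookkeeping once one has the weight decomposition in hand.
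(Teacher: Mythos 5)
Your proposal is correct and follows essentially the same route as the paper: both directions run through Bridgeland's criterion with the adjoints of Lemma~\ref{lem:adjoints}, Serre duality on $\Y$ obtained by reducing to the (twisted) coarse space via the weight decomposition, and the hypothesis $F\O_x \otimes \omega_\Y \cong F\O_x$ absorbing the canonical twist; the skyscraper spanning-class fact you invoke is exactly the final step of the paper's computation of $\Hom^i(Gb,\O_x)$. The only cosmetic differences are your phrasing of the converse via intertwined Serre functors rather than the explicit adjoint formulas, and the swapped labels of left/right adjoints between Theorem~\ref{thm:bridgeland} and Lemma~\ref{lem:adjoints}, which your ``if and only if'' renders harmless.
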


\begin{proof}
First we argue that it is enough to prove the claim when k is algebraically
closed, again mimicking the argument given in ~\cite{AAFH}. If
\[
L, R: D(\Y) \to D(\X)
\]
are the left and right adjoints of $\Phi^\F$, both induced by $\F^\vee[\dim \X]$, then the
unit of the adjunction $1 \to R\Phi^\F$ and the counit $\Phi^\F L \to 1$ are induced by
maps of kernels, and we want to show that these maps are isomorphisms, or
equivalently that their cones are zero. This is true over $k$ if and only if it is
true over $\overline{k}$.

We follow closely the proof of Theorem 5.4 in ~\cite{bridgeland}.   Let $G$ and $H$ be the left and right adjoints to $F$, respectively.  Suppose first that $F$ is an equivalence.  Then both $G$ and $H$ are quasiinverses, so for each closed point $x\in \X$,
\[
G(F\O_x) \cong H(F\O_x)\cong \O_x.
\]
By Lemma ~\ref{lem:adjoints},
\[
G(F\O_x) \cong G(F\O_x)\otimes \omega_\Y \cong H(F\O_x\otimes \omega_\Y)[\dim X - \dim Y].
\]
Since $G$ is an equivalence, $\dim \X = \dim \Y$ and $F\O_x \otimes \omega_\Y \cong F\O_x$.  Since $Y$ is Calabi-Yau, $\omega_Y=\O_Y$, and $\O_Y$ pulls back to the structure sheaf $\O_\Y$.  Therefore $F\O_x \otimes \omega_\Y=F\O_x$, giving the statement.

Conversely, suppose $\dim \X =\dim \Y = n-1$, so $\dim X = \dim Y =n$, and suppose $F\O_x \otimes \omega_\Y \cong F\O_x$ for all $x\in X$.  We first show that for any objects $A,B\in D(\Y)$, $\Hom^i_{D(\Y)}(A,B)\cong \Hom^{n-i}_{D(\Y)}(A,B)^\vee$.\footnote{Recall that for objects $A$ and $B$ in a triangulated category, $\Hom^i(A,B):=\Hom(A,B[i])$.  When $A$ and $B$ are sheaves on a variety, $\Hom^i(A,B) \cong \Ext^i(A,B)$, see ~\cite[Prop. A.68]{greenbook}.}  Since the pullback $s^*$ along a local section commutes with $\Hom$, we have 
\[s^*\Hom^i_{D(\Y)}(A,B) \cong \Hom^i_{D(Y)}(s^*A,s^*B)
\]
and 
\[s^*\Hom^{n-i}_{D(\Y)}(A,B)^\vee \cong \Hom^{n-i}_{D(Y)}(s^*A,s^*B)^\vee
\]
as elements of $D(Y)$.  By duality on $Y$, 
\[
\Hom^{n-i}_{D(Y)}(s^*A,s^*B)^\vee \cong \Hom^i_{D(Y)}(s^*A,s^*B),
\]
so 
\[
s^*\Hom^{n-i}_{D(\Y)}(A,B)^\vee \cong s^*\Hom^i_{D(\Y)}(A,B).
\]
Now $s^*$ is an equivalence of categories, and therefore sends isomorphisms to isomorphisms, proving that $\Hom^i_{D(\Y)}(A,B)\cong \Hom^{n-i}_{D(\Y)}(A,B)^\vee$.  We now apply this fact:

Let $b\in D(\Y)$ by such that $Hb=0$.  For any closed point $x\in \X$ and $i\in \Z$, we have
\begin{align*}
    \Hom^i_{D(\Y)}(Gb,\O_x) &= \Hom^i_{D(\Y)}(b,F\O_x)\\
    &=\Hom^i_{D(\Y)}(b,F\O_x\otimes \omega_\Y)\\
    &=\Hom^i_{D(\Y)}(b,F\O_x)\\
    &=\Hom^{i}_{D(\Y)}(b,F\O_x)\\
    &=\Hom^{n-i}_{D(\Y)}(F\O_x,b)^\vee\\
    &=\Hom^{n-i}_{D(\X)}(\O_x,Hb)^\vee\\
    &=0.
\end{align*}

Since $\Hom^i_{D(\Y)}(Gb,\O_x)=0$ for all $i\in \Z$ and all closed points $x\in \X$, it must be true that $Gb=0$.  Therefore the functors $G$ and $H$ satisfy the hypotheses of Theorem~\ref{thm:bridgeland}, so the proposition follows.

\end{proof}

For later use, we record a result about how to compose integral transforms of gerbes.

\begin{prop}\label{prop:convolution}
Let $\K$, $\L$ be objects in the derived categories of $\X \times \Y$ and $\Y \times \mathcal{Z}$, respectively.  Let $\pi_{\X\Y}:\X \times \Y \times \mathcal{Z} \to \X \times \Y$ be the projection map, and define $\pi_{\Y\mathcal{Z}}$ and $\pi_{\X\mathcal{Z}}$ similarly.  Define $\K *\L :=R\pi_{\X\mathcal{Z}}(L\pi_{\X\Y}^*\K \otimes L\pi_{\Y\mathcal{Z}}^*\L)$ so that $\K *\L$ is an element of $D(\X \times \mathcal{Z})$.  Then $\Phi^\L \circ \Phi^\K=\Phi^{\K *\L}$.
\end{prop}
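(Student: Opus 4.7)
The plan is to run the standard diagram chase on the triple product $\X \times \Y \times \mathcal{Z}$, exactly as in the scheme case (the commented-out version of this proposition appearing earlier in the paper), and to verify that each step of that chase remains valid for our gerbes. The two structural tools needed are flat base change along the Cartesian square formed by pulling $\pi_\Y : \X \times \Y \to \Y$ back along $\pi_\Y : \Y \times \mathcal{Z} \to \Y$, and the projection formula for pushforwards from the triple product. Both are available for algebraic stacks through work of Hall and Hall--Rydh, and the maps in sight are either smooth (as base changes of the smooth structure maps of our gerbes) or cohomologically proper in the sense of Proposition~\ref{prop:cohomologicallyproper}, so the hypotheses for these tools are satisfied throughout.

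Starting from $\mathcal{E} \in D(\X)$, I would unfold
\begin{equation*}
\Phi^\L(\Phi^\K(\mathcal{E})) \;=\; R\pi_{\mathcal{Z}*}\bigl(L\pi_\Y^* R\pi_{\Y*}(L\pi_\X^*\mathcal{E} \otimes \K) \otimes \L\bigr).
\end{equation*}
Flat base change replaces the inner $L\pi_\Y^* R\pi_{\Y*}$ with $R\pi_{\Y\mathcal{Z}*} L\pi_{\X\Y}^*$, lifting the calculation onto the triple product. The projection formula then moves $\L$ across $R\pi_{\Y\mathcal{Z}*}$. Applying the identities $\pi_\X \circ \pi_{\X\Y} = \pi_\X \circ \pi_{\X\mathcal{Z}}$ and $\pi_\mathcal{Z} \circ \pi_{\Y\mathcal{Z}} = \pi_\mathcal{Z} \circ \pi_{\X\mathcal{Z}}$ to rewrite the composed pullbacks and pushforwards, a second application of the projection formula pulls $L\pi_\X^*\mathcal{E}$ outside of $R\pi_{\X\mathcal{Z}*}$. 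What remains is $R\pi_{\mathcal{Z}*}(L\pi_\X^*\mathcal{E} \otimes (\K * \L)) = \Phi^{\K * \L}(\mathcal{E})$ by the definition of the convolution.

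I do not expect any genuine obstacle here: once flat base change and the projection formula are granted, the computation is purely formal manipulation. The only thing to be a little careful about is that each of the isomorphisms used is natural, so that one genuinely obtains an isomorphism of functors rather than merely an object-wise identification; this is built into the cited stacky base change and projection formula statements. In effect, the substantive content of this paper's setup is concentrated in the earlier results on existence of pushforwards and Verdier duality, and the convolution identity is then a formal consequence.
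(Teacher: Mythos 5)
Your proposal is correct and follows essentially the same route as the paper's proof: the same chain of isomorphisms on the triple product, using flat base change (via Hall) to rewrite $L\pi_\Y^* R\pi_{\Y*}$ as $R\pi_{\Y\mathcal{Z}*}L\pi_{\X\Y}^*$, the projection formula (via Hall--Rydh) twice, and the identities among composed projections. Nothing further is needed.
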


\begin{proof}
Given $\E\in D(\X \times \Y)$, one has:
\begin{align*}
    \Phi^\L(\Phi^\K(\E))&=R\pi_{\mathcal{Z}*}(L\pi_\Y^*[R\pi_{\Y*}(L\pi_\X^*\E \otimes \K)] \otimes \L)\\
    & \cong R\pi_{\mathcal{Z}*}(R\pi_{\Y,\mathcal{Z}*}(L\pi_{\X\Y}^*(\pi_\X^*\E \otimes \K)) \otimes \L)\\
    & \cong R\pi_{\mathcal{Z}*}(R\pi_{\Y,\mathcal{Z}*}(L\pi_{\X\Y}^*(L\pi_\X^*\E \otimes \K) \otimes L\pi_{\Y\mathcal{Z}}^*\L))\\
    & \cong R\pi_{\mathcal{Z}*}R\pi_{\X\mathcal{Z},*}(L\pi_{\X\mathcal{Z}}^*L\pi_\X^*\E \otimes L\pi_{\X\Y}^*\K \otimes L\pi_{\Y\mathcal{Z}}^*\L \\
    & \cong R\pi_{\mathcal{Z}*}(L\pi_\X^*\E \otimes (\L * \K))\\
    & =\Phi^{\K * \L}(\E).
\end{align*}

The isomorphism in the second line is cohomology and flat base change, which holds for these algebraic stacks by work of Hall in \cite{hall}.  The isomorphisms in the 3rd and 5th lines are the projection formula for stacks, proved in \cite{hallrydh}.  The 4th line is an isomorphism by the identity $\pi_\X \circ \pi_{\X\Y}=\pi_\X \circ \pi_{\X\mathcal{Z}}$ and $\pi_\mathcal{Z} \circ \pi_{\X\mathcal{Z}}= \pi_\mathcal{Z} \circ \pi_{\Y\mathcal{Z}}$.
\end{proof}

\subsection{Strong simplicity}

Now that we know that fully faithful integral functors between Calabi-Yau stacks are equivalences, we want a criterion in terms of the kernel for determining when an integral functor is fully faithful.  In the case of varieties, this is given by the following theorem of Bondal and Orlov in ~\cite{bondalorlov2}.  Throughout this section, we assume that $k$ is algebraically closed or characteristic zero.  In all applications, we will be able to deduce that the derived equivalence in question holds over $k$ when $k\neq \overline{k}$, as explained in the proof of Proposition~\ref{prop:calabiyauequiv}.

\begin{thm}[\cite{bondalorlov2}, Theorem 1.1]
Let $X$ and $Y$ be smooth projective algebraic varieties of the same dimension, and let $\K$ be a kernel in $D(X \times Y)$.  The functor $\Phi^\K_{X \to Y}$ is fully faithful if and only if $\K$ satisfies
\begin{itemize}
    \item $\Hom^0(\K_x,\K_x)=k$ for all closed points $x\in X$,
    \item $\Hom^i(\K_x,\K_x)=0$ for all $x\in X$ when $i<0$ or $i>\dim X$, and
    \item $\Hom^i(\K_x,\K_y)=0$ for all $i$ and for all closed points $x,y\in X$ with $x\neq y$.
\end{itemize}
If these three criteria are satisfied, then $\K$ is said to be strongly simple over $X$.
\end{thm}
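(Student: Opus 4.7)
\noindent\emph{Plan of proof.}

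The easy direction is ``fully faithful $\implies$ strongly simple.'' If $\Phi^\K$ is fully faithful, then since $\Phi^\K(\O_x) = \K_x$ and integral transforms commute with shifts, we get $\Hom^i(\K_x, \K_y) \cong \Hom^i(\O_x, \O_y)$ for all closed points $x,y$ and all $i$. Using standard computations on a smooth variety of dimension $n$ — skyscrapers at distinct points have no Ext, and $\Ext^i(\O_x,\O_x) \cong \Lambda^i T_x X$ — the three listed conditions follow immediately.

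For the converse, I would apply Bridgeland's spanning class criterion (\cite{bridgeland}, Thm.~2.3 or its standard version): an exact functor between bounded derived categories is fully faithful iff it induces isomorphisms on all $\Hom^i$ between elements of a spanning class. The family $\{\O_x : x \in X \text{ closed}\}$ is a spanning class for $D(X)$, since a complex $E$ with $R\Hom(\O_x,E) = 0$ for every closed point $x$ (and dually $R\Hom(E,\O_x) = 0$) has all cohomology sheaves vanishing on closed points, hence vanishes by coherence. So it suffices to check that $\Phi^\K$ induces isomorphisms $\Hom^i_{D(X)}(\O_x,\O_y) \xrightarrow{\sim} \Hom^i_{D(Y)}(\K_x, \K_y)$ for all closed $x, y \in X$ and all $i \in \Z$.

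For $x \neq y$, both sides vanish: the left by the disjoint-support computation on $X$, the right by the third strong simplicity condition. The substantive case is $x = y$: we must establish $\Hom^i(\K_x, \K_x) \cong \Lambda^i T_x X$ for all $i$. The first and second strong simplicity hypotheses give the correct answer at the boundary degrees $i = 0$ and $i \notin [0,n]$. To propagate this to intermediate degrees, I would exploit the natural graded algebra map
\[
\Lambda^\bullet T_x X \;\cong\; \Ext^\bullet(\O_x, \O_x) \;\longrightarrow\; \Ext^\bullet(\K_x, \K_x)
\]
induced by $\Phi^\K$, together with Serre duality on $Y$, which pairs $\Hom^i(\K_x,\K_x)$ with $\Hom^{n-i}(\K_x, \K_x \otimes \omega_Y)$. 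The equality $\dim X = \dim Y$ (needed for compatibility) and an Euler characteristic count relating $\chi(\K_x, \K_x)$ to $\chi(\O_x, \O_x) = 0$ should then force the dimensions in intermediate degrees to match $\binom{n}{i}$.

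The main obstacle I expect is precisely the middle step: turning the boundary vanishing/1-dimensionality into the full exterior-algebra isomorphism in intermediate degrees, rather than merely a surjection or injection of graded algebras. I would try first to invoke Bondal--Orlov's original argument verbatim (which uses the fact that any graded $k$-linear map from a polynomial/exterior algebra satisfying the appropriate boundary rank conditions must be an isomorphism via Koszul duality / Hilbert series comparison), and if that fails, fall back on a direct deformation-theoretic argument using Serre duality as above. Once these Hom isomorphisms are in place for all $x,y,i$, Bridgeland's criterion closes out the proof.
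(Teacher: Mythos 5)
The paper offers no proof of this statement---it is quoted verbatim from Bondal--Orlov---so I am comparing your sketch against the standard argument. Your ``only if'' direction is correct, and reducing the converse to Bridgeland's spanning-class criterion (checking that $\Phi^\K$ induces isomorphisms $\Hom^i(\O_x,\O_y)\to\Hom^i(\K_x,\K_y)$ on the spanning class of skyscrapers) is also the standard opening move, granted that you verify $\Phi^\K$ has both adjoints via Grothendieck--Verdier duality. The genuine gap is exactly where you flagged it: for $x=y$ and $0<i\le n$ the strong simplicity hypotheses say nothing about $\Hom^i(\K_x,\K_x)$, and none of the tools you propose closes this. Serre duality pairs $\Hom^i(\K_x,\K_x)$ with $\Hom^{n-i}(\K_x,\K_x\otimes\omega_Y)^\vee$, which is not $\Hom^{n-i}(\K_x,\K_x)^\vee$ unless $\K_x\otimes\omega_Y\cong\K_x$, and no Calabi--Yau hypothesis is available at this level of generality. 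An Euler-characteristic count constrains only the alternating sum of dimensions. And even if you could force $\dim\Hom^i(\K_x,\K_x)=\binom{n}{i}$ abstractly, that would not show that the particular map induced by $\Phi^\K$ is bijective, which is what the spanning-class criterion actually requires.

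The actual Bondal--Orlov argument sidesteps the intermediate Ext groups entirely. One forms the left adjoint $G$, writes $G\circ\Phi^\K\cong\Phi^{\M}$ for a convolution kernel $\M\in D(X\times X)$, and shows $\M\cong\O_\Delta$. Condition (iii) gives $\Hom^i(\M_x,\O_y)=\Hom^i(\K_x,\K_y)=0$ for $y\ne x$, so $\M_x$ is supported at $x$; the vanishing of $\Hom^i(\M_x,\O_x)$ for $i<0$ and $i>n$ then forces $\M_x$ to be a sheaf concentrated in degree $0$ (top- and bottom-cohomology arguments using local duality at the point); condition (i) makes $\M_x$ a cyclic module with residue field $k$. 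The remaining step---showing the counit $\M_x\to\O_x$ has trivial kernel---amounts to the injectivity of $\Ext^1(\O_x,\O_x)\to\Ext^1(\K_x,\K_x)$, and this is the one place a genuinely new idea is needed (in the standard accounts it is a flatness/support argument for $\M$ over the first factor of $X\times X$); it is not recoverable from Hilbert-series or Koszul-duality considerations on the graded algebra alone. To complete your write-up you should either import that argument or restructure the proof around the composed kernel $\M$ rather than around computing $\Ext^\bullet(\K_x,\K_x)$ degree by degree.
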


We will prove a similar ``strong simplicity'' criterion for $\gm$-gerbes over such varieties.  To do this, we first recall the definition of closed points on a stack.

\begin{defn}
Two morphisms $p:\Spec K \to \X$ and $q:\Spec L \to \X$, for $K$ and $L$ fields, are said to be equivalent if there exists a field $\Omega$ and a $2$-commutative diagram 

\medskip

\begin{center}
\begin{tikzcd}
\Spec \Omega \arrow[d] \arrow[r] & \Spec K \arrow[d, "p"] \\
\Spec L \arrow[r, "q"]           & \X                    
\end{tikzcd}
\end{center}

\medskip

A closed point on $\X$ is an equivalence class of field-valued points of $\X$ under the above equivalence relation.  
\end{defn}

The set of closed points of a stack will be denoted $|\X|$.  The following description may be used to compute $|\X|$ from a presentation.

\begin{lem}
Let $\X$ be an algebraic stack with presentation $[U/R]$.  The image of $|R|\to |U|\times |U|$ is an equivalence relation, and $|\X|$ is the quotient of $|U|$ by this equivalence relation.
\end{lem}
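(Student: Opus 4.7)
The plan is to prove the two assertions in sequence, both of which rest on the groupoid structure of $R \rightrightarrows U$ and the fact that, for a presentation, $R \cong U \times_\X U$ (so the two projections $s, t: R \to U$ are the ``source'' and ``target'' of the groupoid).

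First I would verify that the image $E \subset |U| \times |U|$ of $|R| \to |U| \times |U|$ is an equivalence relation. Reflexivity comes from the identity section $e: U \to R$, which for any $u \in |U|$ produces a point of $R$ mapping to $(u,u)$. Symmetry follows from the inverse morphism $i: R \to R$, which swaps $s$ and $t$ and hence maps the image pair $(u,v)$ to $(v,u)$. Transitivity is the only step that requires real work: given $(u,v), (v,w) \in E$, one has points $r_1, r_2 \in |R|$ lifting them, but these points may live over different fields. After passing to a common extension of the residue fields (using that both $r_1, r_2$ extend to $\Spec \Omega$-points of $R$ agreeing with the given $v \in |U|$ over $\Omega$), one obtains an $\Omega$-point of the fiber product $R \times_{s, U, t} R$, and applying the composition morphism $c: R \times_{s, U, t} R \to R$ yields an $\Omega$-point of $R$ mapping to $(u,w)$.

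Next I would identify $|\X|$ with the quotient $|U|/E$. For surjectivity, take any $p: \Spec K \to \X$ representing a closed point. Since $U \to \X$ is smooth (or at least flat, locally of finite presentation, and surjective) and the pullback $\Spec K \times_\X U \to \Spec K$ inherits these properties, this pullback is nonempty and admits a $K'$-point for some extension $K'/K$; composing with $\Spec K' \to U$ gives a lift of $p$ (up to equivalence, as the diagram with $\Omega = K'$ 2-commutes by construction). For injectivity, suppose $p: \Spec K \to U$ and $q: \Spec L \to U$ yield equivalent composites to $\X$. The defining 2-commutative square over some field $\Omega$ means that $p|_\Omega$ and $q|_\Omega$ become isomorphic as objects of $\X(\Omega)$, i.e., they determine an $\Omega$-point of $U \times_\X U = R$ whose image in $|U| \times |U|$ equals $(p, q)$ after passing to $\Omega$. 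Hence $(p,q) \in E$. The converse is immediate: a point $r \in |R|$ mapping to $(p, q)$ supplies, over a common field, an isomorphism in $\X$ between the images of $p$ and $q$, so they represent the same closed point.

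The main obstacle is the transitivity step, because $|R \times_{s, U, t} R|$ is genuinely smaller than $|R| \times_{|U|} |R|$ in general; the argument must explicitly lift the two groupoid elements to a common field extension before invoking the composition. All other steps are formal once one uses the identification $R = U \times_\X U$ and the fact that smooth (or fppf) covers induce surjections on field-valued points up to extension of the field.
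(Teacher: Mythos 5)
Your argument is correct. The paper does not actually prove this lemma --- it simply cites \cite[04XE]{stacks} --- and what you have written is essentially the standard argument found there: reflexivity, symmetry, and transitivity of the image of $|R|\to |U|\times|U|$ come from the identity, inverse, and composition of the groupoid, and the identification $|\X|\cong |U|/E$ comes from $R\cong U\times_\X U$ together with the fact that a smooth surjection admits field-valued lifts after a field extension. You are also right to flag transitivity as the only non-formal step: one needs that $|R\times_{s,U,t}R|\to |R|\times_{|U|}|R|$ is surjective, which holds because two field-valued points with a common image in $|U|$ can be amalgamated over a common extension of their residue fields (the tensor product of the two residue field extensions over the residue field of the common image is a nonzero ring, hence has a prime). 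With that point made explicit, the proof is complete.
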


\begin{proof}
See ~\cite[04XE]{stacks}.
\end{proof}

Note that the set of closed points of $\X$ may be computed \'etale locally on $\X$. Since $\X$ is a gerbe over $X$, it is locally a quotient stack, so such a local presentation exists.
In order to understand the closed points of a $\gm$-gerbe $f: \X \to X$, it suffices to give a presentation for $\X$ and apply the above lemma. Since $\X$ is a $\gm$-gerbe, it is locally of the form  $[X/\gm]$. 
Therefore two morphisms $p:\Spec K \to \X$ and $q:\Spec L \to \X$ are equivalent if $f \circ p(\Spec K) = f\circ q(\Spec L)$, and in this case we write $p \sim q$.  This means that the fiber of a sheaf over a closed point of $\X$ is well-defined.  The closed points on $\X$ are in bijection with closed points on $X$; in particular, a closed point of $\X$ has a well-defined image on $X$ under the rigidification map.  This makes precise the following definition.
 
\begin{defn}
\label{defn:stronglysimple}
Let $\F$ be a sheaf on a product of $\G_m$-gerbes $\X \times \Y$.  $\F$ is said to be strongly simple over $\X$ if
\begin{itemize}
    \item $\Hom^0(\F_\mathbf{x},\F_\mathbf{y})=k$ if $\mathbf{x}$ and $\mathbf{y}$ are closed points of $\X$ with $\mathbf{x}\sim \mathbf{y}$, and 
    \item $\Hom^i(\F_\mathbf{x},\F_{\mathbf{y}})=0$ when $i<0$ or $i>\dim X$ if $\mathbf{x}$ and $\mathbf{y}$ are closed points of $X$ with $\mathbf{x}\sim \mathbf{y}$, and
    \item $\Hom^i(\F_{\mathbf{x}},\F_{\mathbf{y}})=0$ for all closed points $\mathbf{x}$ and $\mathbf{y}$ with $\mathbf{x}\neq \mathbf{y}$ in $\X$.
\end{itemize}
\end{defn}

This turns out to the the right condition to guarantee that the integral functor $\Phi^\F:D(\X) \to D(\Y)$ is fully faithful:

\begin{prop}\label{prop:stackystronglysimple}
Let $\X$ and $\Y$ be $\G_m$-gerbes over proper schemes, and let $\F$ be a sheaf over $\X \times \Y$.  Then $\Phi^\F:D(\X) \to D(\Y)$ is fully faithful if and only if $\F$ is strongly simple over $\X$.
\end{prop}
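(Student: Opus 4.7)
The plan is to mimic the classical Bondal--Orlov proof of the strong simplicity criterion for schemes (\cite{bondalorlov2}; see also \cite[Prop.~7.1]{greenbook}), exploiting the gerbey machinery developed earlier in this section. The essential inputs that make this transcription possible are the left and right adjoints $H$ and $G$ of $\Phi^\F$ produced by Lemma~\ref{lem:adjoints}, the Verdier duality statement of Proposition~\ref{lem:verdierduality}, and the weight-graded equivalence $\Coh^t(\X) \cong \Coh(X, t \cdot \alpha)$ from Proposition~\ref{prop:equivofcats}.

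For the forward direction, I would assume $\Phi^\F$ is fully faithful and evaluate on skyscraper sheaves at closed points of $\X$. A standard base change computation together with the projection formula identifies $\Phi^\F(\O_\mathbf{x}) \cong \F_\mathbf{x}$, where $\F_\mathbf{x}$ denotes the restriction of $\F$ to $\{\mathbf{x}\} \times \Y$. Full faithfulness then yields
\[
\Hom^i_{D(\Y)}(\F_\mathbf{x}, \F_\mathbf{y}) \;\cong\; \Hom^i_{D(\X)}(\O_\mathbf{x}, \O_\mathbf{y})
\]
for all closed points $\mathbf{x}, \mathbf{y}$ of $\X$ and all integers $i$. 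To evaluate the right hand side, I would work \'etale-locally: since $\X$ is of the form $X \times B\G_m$ in an \'etale neighborhood of any closed point and $|\X| = |X|$, the Ext computation reduces to the classical Koszul-resolution computation on the smooth projective scheme $X$, which yields precisely the three strong simplicity conditions of Definition~\ref{defn:stronglysimple}.

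For the reverse direction, I would assume $\F$ is strongly simple and use a spanning class argument in the style of \cite[Prop.~1.49]{greenbook}. The strong simplicity hypotheses, combined with Verdier duality on $\Y$, are exactly what is needed to show that the unit $\O_\mathbf{x} \to G \Phi^\F(\O_\mathbf{x})$ of the adjunction $(\Phi^\F, G)$ is an isomorphism at every closed point $\mathbf{x}$: testing $\Hom$ into each $\O_\mathbf{y}$ and dualizing via Verdier duality produces the same three Ext conditions, which strong simplicity supplies. Since $\Phi^\F$ and $G$ are exact functors of triangulated categories and skyscraper sheaves form a spanning class for $D(\X)$, this unit is then an isomorphism on all of $D(\X)$, which is equivalent to $\Phi^\F$ being fully faithful.

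The main obstacle will be verifying the spanning class property on $\X$: one must show that if $A \in D(\X)$ satisfies $\Hom^i(\O_\mathbf{x}, A) = 0 = \Hom^i(A, \O_\mathbf{x})$ for all $\mathbf{x} \in |\X|$ and all $i$, then $A \cong 0$. Since sheaves on $\X$ carry extra $\G_m$-representation data not visible on the coarse space, this is not automatic from the scheme-theoretic analogue. My strategy is to decompose $D(\X) = \bigoplus_t D^t(\X)$ by weight (possible in characteristic zero, since $\G_m$-representations split into weight spaces) and invoke the equivalence $D^t(\X) \cong D(X, t \cdot \alpha)$ on each summand. On each twisted derived category $D(X, t \cdot \alpha)$, skyscraper sheaves span by the classical argument for (twisted) sheaves on schemes (cf.\ \cite{andrei}), and under the equivalence these correspond to the appropriate weight-$t$ versions of $\O_\mathbf{x}$; reassembling across all weights supplies the required spanning class. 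Once this is in place, the rest of the argument is a direct transcription of the classical Bondal--Orlov proof.
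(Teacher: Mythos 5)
Your forward direction is essentially the paper's: evaluate on skyscrapers and read off the three conditions from $\Hom^i_{D(\X)}(\O_{\mathbf{x}},\O_{\mathbf{y}})$. Your reverse direction, however, departs from the paper in a way that leaves a real gap. The paper never re-runs the Bondal--Orlov argument on the stack: it shows that strong simplicity of $\F$ over $\X$ descends to strong simplicity of $s^*\F$ over $X$ (via Proposition~\ref{prop:sisflat} and the equivalences of Proposition~\ref{prop:equivofcats}), invokes the already-established \emph{twisted} strong-simplicity criterion for the schemes $X$ and $Y$, and transports full faithfulness back through the commuting square $D^t(\X)\simeq D(X,t\alpha)$, $D^{t+a}(\Y)\simeq D(Y,(t+a)\beta)$ for every $t\in\Z$. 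Your plan instead is to verify the adjoint/spanning-class criterion directly on $\X$.

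The gap in that plan is not the spanning class (your fix there --- weight-twisted skyscrapers, justified through the decomposition $D(\X)=\bigoplus_t D^t(\X)$ --- is correct and necessary, since the weight-$0$ skyscrapers are orthogonal to every nonzero object of pure nonzero weight). The gap is that the spanning-class criterion demands that $\Hom^i(\O_{\mathbf{x}},\O_{\mathbf{y}})\to\Hom^i(\F_{\mathbf{x}},\F_{\mathbf{y}})$ be bijective for \emph{every} $i$, and for $\mathbf{x}=\mathbf{y}$ and $0<i\le\dim X$ the source is $\Lambda^i T_xX\neq 0$, about which Definition~\ref{defn:stronglysimple} says nothing. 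Bridging exactly this range is the entire content of the classical Bondal--Orlov theorem: injectivity on $\Ext^1$ from simplicity of the fibers, generation of $\Lambda^\bullet T_xX$ in degree one via Koszul resolutions of the skyscraper, and surjectivity in top degree via Serre/Verdier duality. Declaring the remainder ``a direct transcription'' assumes precisely this hard part, and transcribing it would require redoing the Koszul-resolution analysis on the residual gerbes of $\X$. Since your argument already passes through the equivalences $D^t(\X)\cong D(X,t\alpha)$ to get the spanning class, the efficient (and safe) move is to push the whole kernel down to the coarse space at the outset and quote the twisted criterion there, which is what the paper does.
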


\begin{proof}

If $\Phi^\F$ is fully faithful and $j$ denotes the inclusion of a closed point into $\X$, then
\begin{equation} 
\begin{split}
\Hom^i_{D(\Y)}(\mathbf{L}j_{\mathbf{x}}^*\F,\mathbf{L}j_{\mathbf{y}}^*\F) & = \Hom^i_{D(\Y)}(\Phi^\F_{\X \to \Y} \O_{\mathbf{x}},\Phi^\F_{\X \to \Y}(\O_{\mathbf{y}})) \\
 & = \Hom^i_{D(\X)}(\O_{\mathbf{x}},\O_{\mathbf{y}}),
\end{split}
\end{equation}
which implies that $\F$ is strongly simple over $\X$.

Conversely, suppose that $\F$ is strongly simple over $\X$.  We show that this implies that $s^*(\F)$ is strongly simple over $X$.  To see this, we first show the orthogonality condition: let $\mathbf{x}$ and $\mathbf{y}$ denote geometric points of $\X$ and let $x$ and $y$ denote the corresponding geometric points on $X$, i.e. the image of $\mathbf{x}$ and $\mathbf{y}$ under the rigidification map.  $\Hom^i$ commutes with $s^*$ by Proposition~\ref{prop:sisflat}, so
\[
s^*(\Hom^i(\F_{\mathbf{x}},\F_{\mathbf{y}}))\cong \Hom^i(s^*(\F)_x,s^*(\F)_y).
\]
The map $s^*$ gives an equivalence of categories between $\Coh(\X)$ and sheaves on $X$ twisted by the appropriate Brauer class, which means that $s^*(\Hom^i(\F_{\mathbf{x}},\F_{\mathbf{y}}))=0$ if and only if $\Hom^i(\F_{\mathbf{x}},\F_{\mathbf{y}})=0$.  By the equality above, $s^*(\Hom^i(\F_{\mathbf{x}},\F_{\mathbf{y}}))=0$ if and only if $\Hom^i(s^*(\F)_x,s^*(\F)_y)=0$.  Combining these statements, we get that $\Hom^i(\F_{\mathbf{x}},\F_{\mathbf{y}})=0$ if and only if $\Hom^i(s^*(\F)_x,s^*(\F)_y)=0$.

Now we prove that if $\Hom^0(\F_{\mathbf{x}},\F_{\mathbf{y}})=k$, then $\Hom^0(s^*\F_x,s^*\F_y)=k$.  This follows since rank $1$ sheaves map to rank $1$ sheaves under the pullback $s^*$, so $s^*(\Hom^0(\F_{\mathbf{x}},\F_{\mathbf{y}}))=k$, and $s^*(\Hom^0(\F_{\mathbf{x}},\F_{\mathbf{y}})) \cong \Hom^0(s^*\F_x,s^*\F_y)$.  This shows that $s^*\F$ is strongly simple if $\F$ is.

We apply this fact to the following commutative diagram, where $t$ is any integer and $a$ is the weight of the sheaf $\F$.  The vertical arrows are given by $s^*$ on $\X$ and $\Y$, and both of the vertical maps are equivalences.  

\medskip
\begin{center}
\begin{tikzcd}
D^t(\X) \arrow[d, "\simeq"] \arrow[r, "\Phi^\F"] & D^{t+a}(\Y) \arrow[d, "\simeq"] \\
{D(X,t\cdot\alpha)} \arrow[r, "\Phi^{s^*(\F)}"]                & {D(Y,(t+a)\cdot\beta)}               
\end{tikzcd}
\end{center}
\medskip

By commutativity of the diagram, $\Phi^\F$ is an equivalence of categories if and only if $\Phi^{s^*\F}$ is.  

This proves that $\Phi^\F:D^t(\X) \to D^{t+a}(\Y)$ is fully faithful if $\F$ is strongly simple.  This holds for any $t$, and $D(\X)$ is the union of $D^t(\X)$ over all integers $t$.  To see that full faithfulness on each $D^t(\X)$ implies full faithfulness on all of $D(\X)$, note that every object and morphism in $D(\X)$ can be viewed as an object or morphism of some $D^t(\X)$.

\end{proof}

\section{Derived equivalences of gerbes}\label{sec:twisted}

We now consider the relationship between derived equivalences of gerbes and twisted derived equivalence of their coarse spaces, formalizing results in the proof of Proposition~\ref{prop:stackystronglysimple}. %

\begin{prop}
Let $\X$ and $\Y$ be $\G_m$-gerbes over $X$ and $Y$ respectively, corresponding to the Brauer classes $\alpha$ and $\beta$. Suppose $D(\X) \cong D(\Y)$ with kernel $\K \in D(\X \times \Y)$ where $\K$ has weight $1$.  Then $D(X,\alpha) \cong D(Y,\beta)$.
\end{prop}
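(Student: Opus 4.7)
The plan is to deduce the twisted derived equivalence by restricting $\Phi^\K$ to the weight-$1$ components on each side and then invoking the derived version of Proposition~\ref{prop:equivofcats}. Concretely, I would show that $\Phi^\K$ carries $D^1(\X)$ into $D^1(\Y)$ and that its quasi-inverse carries $D^1(\Y)$ back into $D^1(\X)$, so that the restriction $\Phi^\K\colon D^1(\X) \to D^1(\Y)$ is itself an equivalence; composing with the equivalences $D^1(\X) \cong D(X,\alpha)$ and $D^1(\Y) \cong D(Y,\beta)$ then yields the desired $D(X,\alpha) \cong D(Y,\beta)$.

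For the first step, I would track the $\G_m \times \G_m$-weights through the three operations defining $\Phi^\K$. Given $\E \in D^1(\X)$, the pullback $L\pi_\X^*\E$ has bi-weight $(1,0)$ on $\X \times \Y$, and tensoring with the weight-$1$ kernel $\K$ (weight additivity from the remark following Proposition~\ref{prop:equivofcats}) yields a complex whose $\X$-weight is $0$ and whose $\Y$-weight is $1$. Pushforward along $\pi_\Y$ is, on the gerbe factor, the structure map $f\colon \bgm \to \Spec k$, which extracts $\gm$-invariants (this is precisely the mechanism used in Proposition~\ref{prop:cohomologicallyproper}); hence $R\pi_{\Y*}(L\pi_\X^*\E \otimes \K)$ lands in $D^1(\Y)$. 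The quasi-inverse of $\Phi^\K$ is represented, up to a weight-zero twist by the canonical sheaf (Lemma~\ref{lem:adjoints}), by the integral transform with kernel $\K^\vee$; the same weight bookkeeping shows this carries $D^1(\Y)$ back into $D^1(\X)$. So the restricted functor is fully faithful and essentially surjective.

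Finally, applying the derived analogue of Proposition~\ref{prop:equivofcats} on both sides gives
\[
D(X,\alpha) \cong D^1(\X) \cong D^1(\Y) \cong D(Y,\beta).
\]
The realizing kernel is $\rig(\K)$, viewed as a twisted sheaf on $X \times Y$, because the section-map equivalence intertwines tensor products, pullbacks, and pushforwards (using Proposition~\ref{prop:sisflat} for flat base change). This is also the content of Theorem~\ref{theorem:introduction:maintheorem1}. The main obstacle is the weight bookkeeping in the first step: one must verify carefully that tensoring with $\K$ produces exactly the expected bi-weight shift and that no off-weight components of a mixed kernel would interfere, which in particular pins down the meaning of ``weight $1$'' for a kernel on a product of two $\gm$-gerbes. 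Once this is in hand, the rest of the argument is essentially formal, built on the equivalences established in Section~\ref{sec:gerbeybasics}.
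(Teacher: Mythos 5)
Your proposal is correct in outline, but it takes a different route from the paper's main proof --- in fact, it is essentially the alternative argument that the paper itself sketches in the remark immediately following its proof. The paper's proof goes through the coarse space: it applies the forgetful map $s^*$ to the kernel to obtain a $\pi_X^*(\alpha)\otimes\pi_Y^*(\beta)$-twisted sheaf on $X\times Y$, argues that strong simplicity of $\K$ over $\X$ descends to strong simplicity of $s^*(\K)$ over $X$ (so that $\Phi^{s^*(\K)}\colon D(X,\alpha)\to D(Y,\beta)$ is fully faithful by the twisted analogue of Bondal--Orlov), and then upgrades full faithfulness to an equivalence via the Calabi--Yau criterion of Proposition~\ref{prop:calabiyauequiv}. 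Your argument instead restricts the given equivalence to the weight-$1$ subcategories and transports it across the equivalences $D^1(\X)\cong D(X,\alpha)$ and $D^1(\Y)\cong D(Y,\beta)$. What your route buys is that it never needs $\K$ to be an honest sheaf (the strong simplicity criterion in Proposition~\ref{prop:stackystronglysimple} is stated for sheaf kernels) and never needs the Calabi--Yau hypothesis that the paper's proof silently invokes but the proposition does not assume; what the paper's route buys is an explicit identification of the kernel realizing the twisted equivalence without having to verify that $\Phi^\K$ and its quasi-inverse preserve the weight filtration. The one point you rightly flag as delicate is the meaning of ``weight $1$'' for $\K$ on the $\gm\times\gm$-gerbe $\X\times\Y$: for your bi-weight bookkeeping to close up (pullback gives $(1,0)$, pushforward takes invariants in the $\X$-direction), the kernel must have bi-weight $(-1,1)$, i.e.\ be twisted by $\pi_X^*(\alpha^{-1})\otimes\pi_Y^*(\beta)$ rather than $\pi_X^*(\alpha)\otimes\pi_Y^*(\beta)$ as the paper writes; this sign convention should be made explicit, but it does not affect the validity of either argument.
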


\begin{proof}
    Consider the image $s^*(\K)$ of the kernel $\K$ under the forgetful map $s^*$ on the coarse space $X \times Y$.  Since $\K$ has weight $1$, $s^*(\K)$ is a twisted sheaf on $X \times Y$, with twisting given by $\pi_X^*(\alpha) \otimes \pi_Y^*(\beta)\in \Br(X \times Y)$.  Since $\K$ is strongly simple over $\X$, $s^*(\K)$ is strongly simple over $X$.  Therefore $s^*(\K)$ induces a fully faithful integral functor $\Phi^{r(\K)}:D(X,\alpha) \to D(Y,\beta)$.  Since $X$ and $Y$ are of the same dimension and are Calabi-Yau, any fully faithful integral transform is an equivalence by Proposition~\ref{prop:calabiyauequiv}, proving the statement.
\end{proof}

\begin{rem}
One could alternately prove the above statement by noting that $\Coh(X,\alpha)$ may be identified with a distinguished subcategory of $\Coh(\X)$, and likewise with $\Coh(Y,\beta)$ in $\Coh(\Y)$.  These distinguished subcategories are the ones consisting of all the sheaves on which $\gm$ acts via the standard multiplication action.  Since $\K$ has weight $1$ by assumption, one can note that the integral transform sends the distinguished weight $1$ subcategory of $D(\X)$ to the distinguished weight $1$ subcategory of $D(\Y)$.  
\end{rem}

The converse of this statement is not true, but it is true if one restricts to the distinguished subcategories $D^1(\X)$ and $D^1(\Y)$.
%{\color{blue} change this so that both $D(X)$ and $D(Y)$ are twisted; otherwise we're not proving this in full generality.}
\begin{thm}
\label{thm:twistedversusnormalderivedequivalences}
    Let $X$ and $Y$ be two Calabi-Yau schemes over a field $k$, and let $\alpha \in \Br(X)$ and $\beta \in \Br(Y)$ be Brauer classes such that
    $D(X,\alpha) \cong D(Y, \beta)$.
    Then there exists a derived equivalence
    \[
    D^1(\X) \cong D^1(\Y) 
    \]
    at the level of stacks, where $\X$ and $\Y$ are the gerbes associated to $\alpha$ and $\beta$. Further, if $ D(X,\alpha) \cong D(Y, \beta)$ is realized by an integral functor corresponding to the sheaf $F$, then the equivalence $D^1\X) \cong D^1(\Y)$ is realized by the integral functor corresponding to $f_1^*(F)$.
\end{thm}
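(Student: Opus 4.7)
The plan is to obtain the equivalence $D^1(\X) \cong D^1(\Y)$ by sandwiching the given twisted equivalence $\Phi^F$ between the two instances of Proposition \ref{prop:equivofcats}, and then to check that the resulting functor agrees with the integral transform $\Phi^{f_1^*F}$. First I would invoke Proposition \ref{prop:equivofcats} and the remark immediately following Lemma \ref{lemma:weighttsection} to get mutually inverse equivalences of abelian categories $\Coh^1(\X) \cong \Coh(X,\alpha)$ given by $s^*$ and $f_1^*$, and note that these extend to equivalences of triangulated categories $D^1(\X) \cong D(X,\alpha)$; the analogous statement holds for $\Y$. Composing with $\Phi^F$ yields an equivalence
\[
D^1(\X) \xrightarrow{s^*} D(X,\alpha) \xrightarrow{\Phi^F} D(Y,\beta) \xrightarrow{f_1^*} D^1(\Y),
\]
which settles the existence portion of the theorem.

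To identify this composition with $\Phi^{f_1^*F}$, I would take $\mathcal{E} \in D^1(\X)$ and rewrite
\[
f_1^*\bigl(R\pi_{Y*}(L\pi_X^* s^*\mathcal{E} \otimes^L F)\bigr)
\]
by successively commuting $f_1^*$ past $R\pi_{Y*}$ (flat base change, using Proposition \ref{prop:sisflat} and the flat base change results of Hall cited there), past the derived tensor product (via the monoidal compatibility noted in the remark after Proposition \ref{prop:equivofcats}, which says that $s^*$ and its inverse are additive in weights on tensor products), and past $L\pi_X^*$ (naturality of pullback), and then applying the identity $f_1^* \circ s^* \cong \mathrm{id}$ on $D^1(\X)$. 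The result is precisely $R\pi_{\Y*}\bigl(L\pi_\X^*\mathcal{E} \otimes^L f_1^*F\bigr) = \Phi^{f_1^*F}(\mathcal{E})$, as desired.

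The main obstacle I expect is bookkeeping with weights on the product: $\X \times \Y \to X \times Y$ is naturally a $\gm \times \gm$-gerbe rather than a $\gm$-gerbe, so ``$f_1^*F$'' has to be interpreted as the lift with bi-weight $(-1,1)$ on $\X \times \Y$, chosen so that rigidifying it returns the Brauer twist of $F$ on $X \times Y$ and so that the pushforward along $\pi_\Y$ lands in $D^1(\Y)$ when applied to a weight $1$ input from $\X$. Verifying that base change, the projection formula, and the tensor compatibilities all respect this bi-grading is the only genuinely delicate point; once that bookkeeping is in place, the computation in the previous paragraph is essentially formal, and the theorem follows.
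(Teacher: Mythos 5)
Your proposal is correct, but it takes a genuinely different route from the paper. The paper's proof lifts the kernel to $\F = f_1^*(F)$ and then re-runs its own machinery on the stack side: it verifies that $\F$ is strongly simple over $\X$ (transferring each $\Hom$ condition through the equivalence $\Coh^1(\X)\cong\Coh(X,\alpha)$ of Proposition~\ref{prop:equivofcats}), concludes full faithfulness from Proposition~\ref{prop:stackystronglysimple}, and then invokes the Calabi--Yau criterion of Proposition~\ref{prop:calabiyauequiv} to upgrade to an equivalence. You instead conjugate the given equivalence $\Phi^F$ by the triangulated equivalences $s^*$ and $f_1^*$, which yields the equivalence $D^1(\X)\cong D^1(\Y)$ immediately, and then identify the composite with $\Phi^{f_1^*F}$ by commuting $f_1^*$ past $R\pi_{Y*}$, $\otimes^L$, and $L\pi_X^*$. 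Your route buys two things: the existence of the equivalence does not actually use the Calabi--Yau hypothesis (it is a formal composition of equivalences), and you avoid appealing to strong simplicity for twisted kernels on the coarse space, which the paper uses without comment. What it costs is that the entire burden falls on the commutativity of the square relating $\Phi^{f_1^*F}$ on the gerbes to $\Phi^F$ on the coarse spaces --- but the paper's own proof of Proposition~\ref{prop:stackystronglysimple} already asserts exactly this commutativity (in the other direction), so you are held to no higher standard than the paper itself. Your observation that $\X\times\Y\to X\times Y$ is a $\gm\times\gm$-gerbe and that the kernel must be taken with bi-weight $(-1,1)$ for the pushforward along $\pi_\Y$ to be nonzero on weight-$1$ inputs is a genuine subtlety that the paper elides (it calls $\F$ simply ``weight $1$'' for the class $\pi_X^*\alpha\otimes\pi_Y^*\beta$); making this bookkeeping explicit, as you propose, is a point in favor of your write-up rather than a gap in it.
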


\begin{proof}
   
     Consider the derived equivalence $D(X,\alpha) \cong D(Y,\beta)$. Suppose this is realized by the $\pi_X^*(\alpha) \otimes \pi_Y^*(\beta)$-twisted sheaf $F$.  
    Let $\F = f_1^{*}(F)$ be the image of $F$ under the weight $1$ section map. We now show that $D^1(\X) \cong D^1(\Y)$ is induced by $\Phi^{\F}$. This can be done by showing that $\F$ satisfies the following properties from Definition \ref{defn:stronglysimple}.
    
    We start by showing that $\Hom_{\Y}(\F_\mathbf{x},\F_\mathbf{x}) \cong k$ for all $\mathbf{x}\in \X$. Here, $\F_\mathbf{x}$ denotes the restriction $\F|_{\{\mathbf{x} \} \times \Y}$. Since the $\pi_X^*(\alpha) \otimes \pi_Y^{*}(\beta)$ twisted sheaf $F$ realizes the equivalence $D(X,\alpha) \cong D(Y, \beta)$, by strong simplicity for schemes, we know that $\Hom_Y(F_{\rig (x)}, F_{\rig(x)}) \cong k$. Since $F_{\rig(x)}$ is an $\alpha$-twisted sheaf on $Y$, by Proposition \ref{prop:equivofcats}, we have that $k \cong \Hom_{Y}(F_{\rig(x)},F_{\rig(x)}) \cong \Hom_{\Y}(f_1^{*}(F)_{\rig(x)}, f_1^{*}(F)_{\rig(x)}) = \Hom_{\Y}(\F_\mathbf{x}, \F_\mathbf{x})$.

    The statements that $\Hom^i(\F_{\mathbf{x}_1},\F_{\mathbf{x}_2})=0$ for all $i$ and all $\mathbf{x}_1 \neq \mathbf{x}_2 \in \X$, and that $\Hom^i(\F_{\mathbf{x}},\F_{\mathbf{x}}=0$ for $i<0$ and $i>\dim X$, follow by a similar argument, since we have an equivalence of categories between $\Coh(X,\alpha)$ and $\Coh^1(\X)$ from Proposition~\ref{prop:equivofcats}, and this equivalence preserves cohomological degree.

     Note that for any $\mathbf{x} \in \X$, $\F_\mathbf{x} \otimes \omega_{\Y} = \F_\mathbf{x}$, where $\omega_{\Y}$ is the canonical sheaf. This follows from the assumption that $\Y$ is Calabi-Yau. Thus $\F$ satisfies the condition for Proposition \ref{prop:calabiyauequiv}, and is indeed an equivalence.

    To finish off the proof, we need the fact that $\F \in \Coh^1(\X \times \Y)$. Note first that the gerbe $\X \times \Y \rightarrow X \times Y$ is respresented by the class $\pi_X^*(\alpha) \otimes \pi_{Y}^{*}(\beta) \in H^2_{et}(X \times Y, \G_m)$. By the equivalence of categories established in Section~\ref{sec:gerbeybasics}, $\F$ has a standard (weight $1$) $\G_m$ action.

\end{proof}

\section{Grothendieck-Riemann-Roch for gerbes}\label{sec:GRR}

In making explicit the study of derived equivalences of gerbey genus $1$ curves, it will be useful to have a version of Grothendieck-Riemann-Roch for $\gm$-gerbes. We will use equivariant Grothendieck-Riemann-Roch to deduce the corresponding statement for gerbes. Throughout this section, $X$ will denote a scheme of dimension $n$ and $G$ will denote a $g$-dimensional algebraic group acting on $X$. The stack $[X/G]$ will be denoted by $\X$ unless otherwise specified.

We recall the definitions of equivariant Chow groups and Grothendieck groups, as described in~\cite{edidingraham}. Let $V$ be an $\ell$-dimensional representation of $G$ such that there is an open set $U \subset V$ on which $G$ acts freely and such that $V\setminus U$ has codimension at least $n-i$ in $V$ (for some integer $i$).  We will call such a $V$ \emph{admissible for $G$}. In \cite{edidingraham}, the authors show that such a representation always exists. Let $U \to U/G$ be the principal bundle quotient (which is always at worst an algebraic space). The diagonal action of $G$ on $X \times U$ is free, so there is an algebraic space quotient $X \times U \to (X \times U)/G$ which is a principal $G$-bundle.  This quotient is denoted $X_G$.

\begin{defn}
Set $\CH_i^G(X)$, the $i$th equivariant Chow group of $X$, to be $\CH_{i+\ell-g}(X_G)$, for $\CH_*$ the usual Chow group for schemes.  Let $\CH_i^G(X)_\Q:=\CH_i^G(X)\otimes \Q$.
\end{defn}

It turns out that the equivariant Chow groups are independent of the representation $V$ as long as $V\setminus U$ has large enough codimension in $V$.  For a proof of this fact, see \cite{edidingraham}. Further, there is a natural graded ring structure on $CH_*^G(X)$; for details, see~\cite{EGRR}. If $Y \subset X$ is an $m$-dimensional $G$-invariant subvariety, then it has a $G$-equivariant fundamental class $[Y]_G\in Ch_m^G(X)$.  More generally, if $V$ is an $\ell$-dimensional $G$-representation and $S \subset X \times V$ is an $(m+\ell)$-dimensional subvariety, then $S$ has a $G$-equivariant fundamental class $[S]_G\in \CH_m^G(X)$.  This means that $\CH_i^G(X)$ can be nonzero for all $i\le \dim X$, including $i<0$. Since the equivariant Chow groups $\CH_i^G(X)$ are defined as ordinary Chow groups of the variety $X_G$, they enjoy all the functoriality properties of ordinary Chow groups.  One may also define $K_0(G,X)$, which is the Grothendick group of $G$-equivariant vector bundles on $X$.  The equivariant groups $K_0(G,X)$ and $\CH^G_{*}(X)$ satisfy Grothendieck-Riemann-Roch, as stated in  %\textcolor{red}{Okay, I was looking up Edidin-Graham and I was confused about something. In theorem 3.1 in the paper, which is where they state that $\tau^G$ is the equivariant RR map, etc., part (c) says that if $\epsilon \in K_0^G$ and $\alpha \in G^G$, then $\tau^G(\epsilon \alpha) = ch^G(\epsilon)\tau^G(\alpha)$. In particular (as a sanity check) $\tau^G(1)$ should be $Td(TX)$. See Fulton Chapter 18 for reference. Also, the paper doesn't state the version that we state below exactly, so I put in some modifying sentences.} 
the following theorem, which is a consequence of Theorem 3.1 in \cite{EGRR}. %\textcolor{red}{Not strictly important, but why does $\tau_X$ land only in the positive part of $\CH_{*}$?}

\begin{notation}
For a vector bundle $E$ on $X$, we denote by $ch(E)$ and $Td(E)$, the Chern character and Todd class of $E$ respectively. If $E \in K_0(G,X)$, then its $G$-equivariant Chern character and Todd class will be denoted by $ch^G(E)$ and $Td^G(E)$ respectively. We refer the reader to \cite{EGRR} for the precise definitions of the equivariant versions: essentially, these may be calculated by calculating the classical $ch$ and $Td$ on the space $X_G$.
\end{notation}

\begin{thm}[]\label{thm:equivariantRR}
There exists an equivariant homomorphism $\tau_X:K_0(G,X) \to \underset{i\ge 0}{\prod}\CH^G_{i}(X) \otimes \Q$. 
The map $\tau_X$ is covariant for proper $G$-equivariant morphisms and when $X$ is a smooth scheme and $\varepsilon \in K_0(G,X)$ is the class of a vector bundle, then %(\textcolor{red}{modulo showing $\tau(1) = Td(TX-\mathfrak{g})$})
\[
\tau_X(\varepsilon)= \frac{ch(\varepsilon)Td(TX-\mathfrak{g})}{Td^G(V)}
\]
where $\mathfrak{g}$ is the adjoint representation of $G$ and $V$ is a choice of $G$-representation that is admissible for $G$.  Furthermore, the map $\tau_X$ is the map which makes Grothendieck-Riemann-Roch hold.  In particular, the following diagram is commutative:
\begin{center}
\begin{tikzcd}
K_0^G(X\times Y) \arrow[r, "\pi_!"] \arrow[d, "\tau_{X\times Y}"] & K_0^G(X) \arrow[d, "\tau_X"] \\
\CH_*^G(X \times Y)_\Q \arrow[r, "\pi_*"]                         & \CH_*^G(X)_\Q.                        
\end{tikzcd}
\end{center}

\end{thm}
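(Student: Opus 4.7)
The plan is to deduce this equivariant Grothendieck-Riemann-Roch statement directly from the scheme-theoretic GRR applied to the mixing-space approximations $X_G$, following the strategy of Edidin-Graham. Fix a $G$-representation $V$ of dimension $\ell$ that is admissible for $G$ with free locus $U \subset V$, and form $X_G = (X \times U)/G$. By the definitions recalled just before the theorem, pullback along the principal $G$-bundle $X \times U \to X_G$ identifies (in the appropriate range of codimension) $K_0(G, X) \cong K_0(X_G)$, and by definition $\CH_i^G(X) \cong \CH_{i+\ell-g}(X_G)$. The role of $\tau_X$ will simply be to be the transport of the classical $\tau_{X_G}$ along these identifications.

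First I would invoke classical Grothendieck-Riemann-Roch on the smooth algebraic space $X_G$ to produce a homomorphism $\tau_{X_G} : K_0(X_G) \to \CH_*(X_G)_\Q$, which is covariant for proper morphisms and satisfies $\tau_{X_G}(\beta) = ch(\beta) \cdot Td(T_{X_G})$ for vector bundle classes. To get the formula stated in the theorem, I would compute $Td(T_{X_G})$ via the short exact sequence
\[
0 \to T_{X_G/(U/G)} \to T_{X_G} \to \pi^*T_{U/G} \to 0,
\]
where $\pi: X_G \to U/G$ is the structure morphism. The relative tangent bundle descends from $T_X$ twisted by $-\mathfrak{g}$ (recording the infinitesimal $G$-action), and $T_{U/G}$ is computed from the exact sequence $0 \to \mathfrak{g} \to V \to T_{U/G} \to 0$, giving $Td(T_{U/G}) = Td^G(V)/Td^G(\mathfrak{g})$ (up to the large-codimension complement). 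Multiplicativity of the Todd class in short exact sequences then yields the claimed formula $\tau_X(\varepsilon) = ch(\varepsilon) Td(TX - \mathfrak{g})/Td^G(V)$.

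Next I would establish the functoriality properties. Forming the mixing space is functorial: a proper $G$-equivariant morphism $f : X \to Y$ yields a proper morphism $f_G : X_G \to Y_G$, since $X_G \to Y_G$ is the base change of $f$ along the principal bundle $Y \times U \to Y_G$. Covariance of $\tau_X$ for proper equivariant morphisms then follows immediately from classical GRR-covariance of $\tau_{X_G}$. Likewise, the commutative square in the theorem for $\pi : X \times Y \to X$ is the equivariant incarnation of the classical GRR square for $\pi_G : (X \times Y)_G \to X_G$ applied to the proper morphism obtained by mixing.

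The main obstacle, and the place where one needs to be careful, is independence of the choice of admissible representation $V$. One must verify that the map $\tau_X$ and the formula $ch(\varepsilon) Td(TX - \mathfrak{g})/Td^G(V)$ do not depend on $V$, even though $Td^G(V)$ sits explicitly in the denominator. This is handled by the standard double-fibration argument comparing two admissible representations $V$ and $V'$ via the mixed space built from $V \oplus V'$: one shows that the two constructions agree after further approximation, using that the $V'$ factor contributes trivially in sufficiently low codimension. Once this independence is in hand, the assembly above delivers the theorem as a direct corollary of \cite[Thm.~3.1]{EGRR}.
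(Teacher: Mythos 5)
Your overall strategy --- build $\tau_X$ by transporting the classical Riemann--Roch map on the Borel mixing spaces $X_G=(X\times U)/G$ and then checking independence of the admissible representation $V$ --- is exactly the Edidin--Graham construction, and in that sense it is sound. But it is worth noting that the paper does not re-derive this: it simply cites \cite[Thm.~3.1]{EGRR} for the existence, covariance, and $V$-independence of $\tau_X^G$, and then obtains the displayed formula from the module property $\tau_X^G(\epsilon\alpha)=ch^G(\epsilon)\,\tau_X^G(\alpha)$ by taking $\alpha=[\O_X]$ and computing $\tau_{X_G}([\O_X])=Td(T_{(X\times U)/G})=Td(TX-\mathfrak{g})$, after which one divides by $Td^G(V)$. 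So your route is a self-contained unpacking of the citation rather than a genuinely different argument; what it buys is transparency about where the formula comes from, at the cost of re-proving the hardest step (independence of $V$), which you correctly isolate but only gesture at.

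Two concrete slips in your write-up. First, the claim that pullback along $X\times U\to X_G$ ``identifies $K_0(G,X)\cong K_0(X_G)$'' is false: unlike Chow groups, equivariant $K$-theory admits no codimension filtration making finite approximations exact, and already for $X=\Spec k$ one has $K_0(G,\mathrm{pt})=R(G)$ while $K_0(U/G)$ only sees a completion of $R(G)$. The construction survives because you only need the forward map $K_0(G,X)\to K_0(X_G)$, and because the target is the completed group $\prod_i \CH^G_i(X)_\Q$; but as stated the identification is an overstatement, and it is precisely the reason $\tau_X$ lands in a product rather than a direct sum. Second, your Todd-class computation double-counts $\mathfrak{g}$: the single copy of $-\mathfrak{g}$ comes from the principal-bundle sequence $0\to\mathfrak{g}\to T_{X\times U}\to p^*T_{X_G}\to 0$, so $T_{X_G}$ descends from $T_X+V-\mathfrak{g}$. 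If, as you write, the relative tangent bundle of $X_G\to U/G$ were $T_X-\mathfrak{g}$ \emph{and} $T_{U/G}$ were $V-\mathfrak{g}$, you would get $Td(TX-2\mathfrak{g})\cdot Td^G(V)$ and the final formula would come out wrong by a factor of $Td(\mathfrak{g})$. The $-\mathfrak{g}$ belongs only to the base $U/G$ (equivalently, to the total bundle $X\times U\to X_G$), not additionally to the fibers.
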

%\textcolor{red}{Notation question: Is $K_0^G(X)$ the same as $K_0(G,X)$? Also should the $\CH^i_{G}(X)$ be $\CH^G_{i}(X)$? Oh wait, Edidin-Graham uses lower numbering sometimes - ugh, confusing.}

Indeed, $\tau_X$ is typically defined on the larger group $G_0(G,X)$, which is the Grothendieck group of equivariant coherent sheaves on $X$. $K_0(G,X)$ is a subgroup of $G_0(G,X)$, and for our purposes, we will only need the restriction of $\tau_X$ to $K_0(G,X)$. Theorem 3.1 in \cite{EGRR} states that if $\epsilon \in K_0(G,X)$ and $\alpha \in G_0(G,X)$ then $\tau_X^G(\epsilon \alpha) = ch^G(\epsilon) \tau_X^G(\alpha)$. Choose $V$ and $U$ as in the definition of $\CH^G_{*}(X)$. Then $\tau_X(\alpha)$ is defined as the element of $\prod \CH^G_j(X)_{\Q}$ whose projection to the $j$-th component agrees with that of
$$
\frac{\tau_{X_G}(\alpha)}{Td^G(V)},
$$
where $\tau_{X_G}$ agrees with the classical GRR map. %(\textcolor{red}{Here onwards is where we are claiming things}) 
Now, if $\alpha = [\mathcal{O}_X]$ in $G_0(G,X)$, then 
$$
\tau_{X_G}(\alpha) = Td(T_{X \times U/ G}) = Td(TX - \mathfrak{g}),
$$
since $G$ acts trivially on $U$. This shows how Theorem \ref{thm:equivariantRR} follows from Theorem 3.1 in \cite{EGRR}. Further, in \cite{EGRR}, the authors show that the equivariant GRR map is independent of the choice of $U$ and $V$. In particular, if $G=GL_n$, we may take $V$ corresponding to trivial $G$-bundle over $X$, i.e. the usual matrix representation of $GL_n$. In this case, we have that for $\epsilon \in K_0(G,X),$ $\tau_X(\epsilon) = ch(\epsilon)Td(TX - \mathfrak{g})$.
%letting $V$ be the trivial $G$-bundle over $X$, we have that for $\epsilon \in K_0(G,X),$ $\tau_X(\epsilon) = ch(\epsilon)Td(TX - \mathfrak{g})$. 
%\textcolor{red}{I'm worried about taking the trivial bundle here. When EG show the existence of an admissible representation $G$, they first embed it into $GL_n$ and then show that any case can be reduced to that of $GL_n$. And then for $GL_n$ they take the trivial bundle. Why not take the trivial $G$ bundle directly? Oh, I guess they need a linear action, so maybe one has to show that a ``trivial" $G$-bundle does indeed give a vector space $V$. I think this works, but I'm also happy to punt on it and just use say that for $G=GL_n$, we can take the trivial bundle, i.e. $GL_n$ acting on matrices in the usual way, an leave it at that.}
%\textcolor{blue}{Suggested edit: In particular, if $G=GL_n$, we may take $V$ corresponding to trivial $G$-bundle over $X$, i.e. the usual matrix representation of $GL_n$. In this case, we have that for $\epsilon \in K_0(G,X),$ $\tau_X(\epsilon) = ch(\epsilon)Td(TX - \mathfrak{g})$.}

\begin{rem}
In the case that $G$ is solvable, $\mathfrak{g}$ is a trivial representation of $G$ and the formula may be written as $\tau_X(V)=ch(V)Td(TX)$.
\end{rem}

We now turn to defining Chow rings and Grothendieck rings on gerbes, and show that for quotient stacks, these groups coincide with their equivariant avatars on schemes.   %\textcolor{red}{Let $\X$ be any algebraic stack. The Chow ring of $\X$ may be defined as follows.  An element $c\in \CH^k(\X)$ defines an operational $\CH_*(B) \overset{c_f}{\to} \CH_{*-k}(B)$ for any scheme-valued point $f:B \to \X$ of $\X$.  The operationals are compatible with proper pushforward, flat pullback, and intersection products for all scheme-valued points of $\X$, which gives $\CH_*(\X)$ a ring structure.  It turns out that when $\X$ is a quotient stack, the Chow ring of $\X$ coincides with the equivariant Chow ring of $X$} \textcolor{blue}{Why do we need this? Are we writing this to show that $\CH$ of a stack has a ring structure? If so we should either be concrete about what $c_f$ is or just state that it has a ring structure and give a reference. The current form doesn't really help readers who don't know about this and isn't required people who know about this either. Also, if you're okay with it, I think this should go earlier with the definition of Chow groups.}

\begin{prop}
Let $[X/G]$ be a smooth quotient stack.  Then $\CH^*([X/G])\cong \CH^*_G(X)$.  Furthermore, the integral Chow groups of $[X/G]$ are $\CH_i([X/G]) = \CH^G_{i-g}(X)$ for $g=\dim G$.
\end{prop}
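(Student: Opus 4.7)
The plan is to identify $[X/G]$ with the system of algebraic space approximations $X_G$ and to transfer Chow-theoretic data between them using standard intersection theory on smooth algebraic stacks. The key geometric observation is that, for an admissible representation $V$ with free open $U$, the natural projection
\[
\pi : [X \times V / G] \to [X/G]
\]
is a rank-$\ell$ vector bundle of smooth algebraic stacks (with fiber $V$). By homotopy invariance of Chow groups for smooth algebraic stacks, flat pullback induces an isomorphism $\CH_k([X/G]) \xrightarrow{\sim} \CH_{k+\ell}([X \times V / G])$.

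Next, I would restrict along the open inclusion $[X \times U / G] \hookrightarrow [X \times V / G]$. Since $G$ acts freely on $U$, the open substack $[X \times U / G]$ is an algebraic space and coincides with $X_G = (X \times U)/G$. The complementary closed substack $[X \times (V \setminus U)/G]$ has codimension equal to $\operatorname{codim}_V(V \setminus U)$, which by admissibility of $V$ is large enough that, in the localization sequence for Chow groups, the contribution of the complement vanishes in the relevant dimensional range. This yields $\CH_{k+\ell}([X \times V / G]) \cong \CH_{k+\ell}(X_G)$. Composing with the isomorphism from the previous paragraph gives $\CH_k([X/G]) \cong \CH_{k+\ell}(X_G)$, and matching this against the definition $\CH^G_i(X) = \CH_{i+\ell-g}(X_G)$ recovers the stated identification of dimension-graded Chow groups. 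For the codimension-graded statement, the same argument — together with the fact that both the bundle pullback and the open restriction preserve codimension — gives $\CH^i([X/G]) \cong \CH^i(X_G) = \CH^i_G(X)$, compatibly with the intersection products on each side.

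The main obstacle is not computational but foundational: the argument requires that Chow groups of the smooth algebraic stack $[X/G]$ admit flat pullback, homotopy invariance for vector bundles, and a localization/excision sequence, all compatible with the corresponding operations on the algebraic spaces $X_G$. These properties are established in the intersection theory of smooth algebraic stacks developed by Vistoli and Kresch, and the independence of $\CH^G_\ast(X)$ from the choice of $(V,U)$ (already recorded in the paper) guarantees that the identifications above are canonical. Granted these ingredients, the proposition is essentially a formal consequence of the statement that $X_G$ is a sufficiently good scheme-theoretic approximation to $[X/G]$ in the finite dimensional range under consideration.
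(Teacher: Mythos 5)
The paper offers no proof of its own here; it simply cites Edidin--Graham, and your sketch is precisely the approximation argument underlying that citation (vector-bundle pullback to $[X\times V/G]$, homotopy invariance, excision of the high-codimension locus $[X\times(V\setminus U)/G]$, identification of $[X\times U/G]$ with $X_G$). Two caveats. First, the foundational input is attributed a bit loosely: Vistoli's theory is for Deligne--Mumford stacks with $\Q$-coefficients and does not apply to $[X/G]$ with $G=GL_n$. You need Kresch's integral theory for Artin stacks --- but Kresch's Chow groups of a quotient stack are themselves built from exactly these approximations, so your comparison is close to unwinding the definition. The alternative (and what Edidin--Graham actually do) is to \emph{define} $\CH_i([X/G])$ as an equivariant Chow group of a presentation and prove independence of the presentation; that independence statement is the real mathematical content, and your sketch should acknowledge that it is being invoked rather than derived.

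Second, your final ``matching'' step, if carried out, does not recover the formula as printed. Your chain gives $\CH_k([X/G])\cong\CH_{k+\ell}(X_G)$, and since $\CH^G_i(X)=\CH_{i+\ell-g}(X_G)$ this is $\CH^G_{k+g}(X)$, not $\CH^G_{k-g}(X)$. The shift $+g$ is in fact the correct (Edidin--Graham) convention --- sanity check on $BG=[\mathrm{pt}/G]$: the fundamental class should live in $\CH_{-g}(BG)=\CH^G_{0}(\mathrm{pt})=\CH_{\ell}(U/G)=\Z$, which the $+g$ formula gives and the $-g$ formula does not. So the proposition as stated appears to contain a sign slip, and your argument, done carefully, proves the corrected statement; asserting that it ``recovers the stated identification'' papers over exactly the index bookkeeping that needed to be checked.
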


For a proof of this proposition, see ~\cite{edidingraham}. Next, we note that the Grothendieck group of vector bundles on a quotient stack has an explicit equivariant description.

%We now discuss the equivariant Grothendieck group of vector bundles on a scheme.  The group $K_0(G,X)$ is defined to be the Grothendieck group of $G$-equivariant vector bundles on $X$, and $G_0(G,X)$ is defined to be the Grothendieck group of $G$-equivariant coherent sheaves on $X$.  Similarly, $K_0(\X)$ and $G_0(\X)$ are the Grothendieck groups of vector bundles on $\X$ and coherent sheaves on $\X$, respectively.

\begin{prop}
Let $G$ be an algebraic group acting on the variety $X$. Then, $K_0([X/G]) \cong K_0(G,X)$.
\end{prop}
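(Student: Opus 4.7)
The plan is to establish an equivalence of exact categories between vector bundles on the stack $[X/G]$ and $G$-equivariant vector bundles on $X$, from which the desired isomorphism of Grothendieck groups follows formally.

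First, I would invoke descent theory for the canonical atlas $\pi:X \to [X/G]$. Since $\pi$ is a smooth (and in particular fpqc) cover, a vector bundle on $[X/G]$ is equivalent, by descent for quasi-coherent sheaves, to the data of a vector bundle $\mathcal{E}$ on $X$ together with an isomorphism $\phi:\mathrm{pr}_2^*\mathcal{E} \xrightarrow{\sim} \mathrm{pr}_1^*\mathcal{E}$ on the fiber product $X \times_{[X/G]} X$, satisfying the usual cocycle condition on the triple fiber product. The key observation is that the groupoid presentation of $[X/G]$ identifies $X \times_{[X/G]} X$ with $G \times X$, where the two projections to $X$ correspond respectively to the action map $(g,x) \mapsto g\cdot x$ and the second projection $(g,x)\mapsto x$. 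Under this identification, the descent datum $\phi$ becomes precisely the data of a $G$-action on $\mathcal{E}$ lifting the action on $X$, which is a $G$-equivariance structure in the standard sense.

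Second, I would verify that this equivalence of categories is exact. The pullback $\pi^*$ is exact because $\pi$ is flat, and conversely a short exact sequence of $G$-equivariant vector bundles on $X$ descends to a short exact sequence on $[X/G]$, again because the atlas is faithfully flat and descent for short exact sequences of quasi-coherent modules is effective. Because the equivalence of categories of vector bundles preserves short exact sequences in both directions, it induces the required isomorphism $K_0([X/G]) \cong K_0(G,X)$ on the associated Grothendieck groups.

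The main point requiring care, rather than a substantive obstacle, is the identification of descent data along $\pi$ with the classical notion of $G$-equivariant bundle. This amounts to tracing through the groupoid presentation $G \times X \rightrightarrows X$ and matching the cocycle condition with associativity of the $G$-action, and is a standard fact in descent theory, for which one may cite e.g.\ Vistoli's notes on descent in the FGA explained volume, or the corresponding entries in \cite{stacks}.
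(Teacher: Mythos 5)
Your proposal is correct and is essentially the paper's argument: the paper's proof consists of the single assertion that a vector bundle on $[X/G]$ is the same data as a $G$-equivariant vector bundle on $X$, and you have simply supplied the standard descent-theoretic justification (identifying $X \times_{[X/G]} X$ with $G \times X$ and the cocycle condition with the equivariance structure) together with the routine check that the equivalence is exact.
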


\begin{proof}
This holds because a vector bundle on $[X/G]$ is simply the data of a $G$-equivariant vector bundle on $X$.
\end{proof}

The above propositions give a clean description of $\CH_{*}(\X)$ and $K_0(\X)$ in terms of their equivariant versions when $\X$ is a quotient stack. For an arbitrary stack, such relations need not hold. However, in the case that $\X$ is a $\gm$-gerbe over a genus $1$ curve, it can be realized as a global quotient stack. In \cite{ehkv}, Edidin, Hassett, Kresch and Vistoli give the following criterion for a $\gm$-gerbe to be a global quotient:

\begin{prop}\label{prop:quotientstack}[\cite{ehkv}, Theorem 3.6]
Let $X$ be a noetherian scheme and let $\beta \in H^2_{\et}(X,\gm)$.  Further let the Brauer map denote the injective map from the Azumaya Brauer group to the cohomological Brauer group.  Then the following are equivalent:

(i) $\beta$ lies in the image of the Brauer map.

(ii) the $\gm$-gerbe classifying $\beta$ is a quotient stack. %\textcolor{red}{is it always a quotient by an algebraic group? I would think yes, since I think whatever the group is has to be a subgroup of $GL_n$ for some $n$?}  \textcolor{blue}{Yes, it is always a quotient by $GL_n$ for some $n$.  The theorem in~\cite{ehkv} proves that the quotient stack structure is always of the form $[X \times PGL_r/GL_r]$ for some $r$.}
\end{prop}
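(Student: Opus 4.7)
The strategy is to prove the two implications separately, using the standard dictionary between Azumaya algebras, $PGL_n$-torsors, and $\G_m$-gerbes coming from the short exact sequence
\[
1 \to \G_m \to GL_n \to PGL_n \to 1.
\]

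For the implication (i) $\Rightarrow$ (ii), I would start with an Azumaya algebra $A$ of degree $n$ on $X$ representing $\beta$ and produce the $\G_m$-gerbe $\mathcal{X}$ as an explicit quotient stack. The key observation is that the sheaf $P$ of algebra-isomorphisms $A \xrightarrow{\sim} M_n$ is naturally a $PGL_n$-torsor over $X$, and the connecting homomorphism $H^1_{\et}(X,PGL_n)\to H^2_{\et}(X,\G_m)$ sends $[P]$ to $\beta$. I would then check that the quotient $[P/GL_n]$, where $GL_n$ acts on $P$ through the surjection $GL_n\twoheadrightarrow PGL_n$, is isomorphic to $\mathcal{X}$. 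Concretely, an object of $[P/GL_n](T)$ is a $GL_n$-torsor $Q$ on $T$ together with a $GL_n$-equivariant map $Q\to P$; since $\G_m$ acts trivially on $P$, this amounts to an iso $Q/\G_m \cong P|_T$ of $PGL_n$-torsors, i.e.\ to a trivialization of $A|_T$ by the endomorphism algebra of the rank $n$ bundle associated to $Q$. This identifies $[P/GL_n]$ with the $\G_m$-gerbe of $A$-module structures, which is precisely the gerbe of class $\beta$.

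For the implication (ii) $\Rightarrow$ (i), I would go in the reverse direction: produce an Azumaya algebra from a quotient presentation of $\mathcal{X}$. The idea is to use the quotient structure to obtain a weight-$1$ vector bundle $\mathcal{E}$ on $\mathcal{X}$ of some rank $n$. Given such an $\mathcal{E}$, the sheaf $\mathcal{E}nd(\mathcal{E})$ is automatically of weight $0$, so by Proposition \ref{prop:equivofcats} (applied with $t=0$) it descends to a coherent sheaf of algebras $A$ on the coarse space $X$. Since $\mathcal{E}$ is étale locally a trivial rank $n$ bundle on $\X$, $A$ is étale locally $M_n$, hence an Azumaya algebra. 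A direct cocycle computation, comparing gluing data for $\mathcal{E}$ over an étale cover on which $\mathcal{X}$ trivializes, shows that the Brauer class $[A]$ equals $\beta$.

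The main obstacle is establishing the existence of the weight-$1$ vector bundle $\mathcal{E}$ in (ii) $\Rightarrow$ (i). If $\mathcal{X}=[Y/G]$ with $G$ a linear algebraic group, one can reduce to the case $G=GL_n$ (say by taking the associated bundle along a faithful representation $G\hookrightarrow GL_n$ and replacing $Y$ accordingly), and then use the standard representation of $GL_n$, whose center $\G_m\subset GL_n$ acts with weight $1$, precisely matching the $\G_m$-gerbe structure on $\mathcal{X}$. This is a form of the resolution property for quotient stacks, and verifying that the weight-$1$ condition is compatible with the given gerbe structure (rather than some other $\G_m$-structure coming from the presentation) is the most delicate point, and is what ultimately ties the construction to the class $\beta$ rather than an unrelated Brauer class.
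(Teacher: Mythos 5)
The paper does not actually prove this statement: it is quoted from \cite{ehkv}, Theorem 3.6, and the only part the paper uses (and records explicitly in the paragraph that follows) is the implication (i) $\Rightarrow$ (ii), namely that the gerbe of $\beta$ is $[\widetilde{X}/GL_n]$ for $\widetilde{X}$ the $PGL_n$-torsor mapping to $\beta$ under the boundary map. Your proof of (i) $\Rightarrow$ (ii) is correct and is exactly this construction: $[P/GL_n](T)$ is the groupoid of pairs $(E,\phi)$ with $E$ a rank $n$ bundle on $T$ and $\phi:\mathcal{E}nd(E)\cong A|_T$, i.e.\ the gerbe of trivializations of $A$, whose class is the image of $[P]$ under the boundary map, which is $\beta$.

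The implication (ii) $\Rightarrow$ (i), however, has a genuine gap at exactly the point you flag as delicate, and the mechanism you propose there does not work. After reducing to $\X=[Y/GL_n]$, the stabilizer of a point of $Y$ is some copy of $\G_m$ inside $GL_n$, but there is no reason for it to be the center: it is only a faithfully embedded one-parameter subgroup, so the standard representation restricted to it has weights $(w_1,\dots,w_n)$ with $\gcd(w_i)=1$, not with all $w_i=1$. Already for $B\G_m=[(GL_2/H)/GL_2]$ with $H=\{\mathrm{diag}(\lambda,1)\}$, the standard representation has weights $(1,0)$; its endomorphism sheaf is then not of pure weight $0$, so it does not even descend to a single algebra on the coarse space, let alone an Azumaya algebra of class $\beta$. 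The repair --- and this is the actual content of the argument in \cite{ehkv} --- is to take the faithful locally free sheaf $V$ furnished by the quotient presentation, decompose it into $\G_m$-eigensheaves $V=\bigoplus_w V_w$, observe that faithfulness forces the occurring weights to generate $\Z$, choose integers $a_i$ with $\sum_i a_i w_i=1$, and form $\bigotimes_i V_{w_i}^{\otimes a_i}$ (negative exponents meaning duals), which is a nonzero locally free sheaf of pure weight $1$. With that sheaf in hand, the remainder of your argument (its $\mathcal{E}nd$ has weight $0$, descends by Proposition \ref{prop:equivofcats}, is \'etale locally a matrix algebra, and a cocycle comparison identifies its class with $\beta$) goes through.
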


By a theorem of Gabber (proved in \cite{deJong}), if $X$ is a quasicompact separated scheme admitting an ample invertible sheaf, then the Azumaya Brauer group is the same as the cohomological Brauer group. Thus if $\X$ is a $\gm$-gerbe over such a scheme $X$, then $\X$ arises as a global quotient stack. In fact, if $\X$ corresponds to $\beta \in H^2_{\et}(X, \gm)$, then $\X = [\widetilde{X}/ GL_n]$, where $\widetilde{X}$ is the $PGL_n$ torsor over $X$ corresponding to the preimage of $\beta$ under the boundary map $H^1_{\et}(X, PGL_n) \to H^2_{\et}(X, \gm).$

Combining Proposition \ref{prop:quotientstack} with equivariant Grothendieck-Riemann-Roch (Theorem \ref{thm:equivariantRR}) gives us the following.

\begin{thm}\label{thm:GRR}
Let $\X$ be a $\gm$-gerbe over a quasicompact separated scheme $X$, with $X$ admitting an ample line bundle.  Let $\Y$ and $Y$ be defined similarly. Then there is a commutative diagram
\begin{center}
\begin{tikzcd}
K_0(\X\times \Y) \arrow[r, "\pi_!"] \arrow[d, "\tau_{\X \times \Y}"] & K_0(\X) \arrow[d, "\tau_\X"] \\
\CH_*(\X \times \Y)_\Q \arrow[r, "\pi_*"]                         & \CH_*(\X)_\Q.                         
\end{tikzcd}
\end{center}
When the input is a vector bundle, $\tau_{\X \times \Y}(-)=ch(-)\cdot Td(T_{\X \times \Y})$, and similarly for $\tau_\X$.
%where $K_0^{\gm}$ and $\CH_*^{\gm}$ denote $\gm$-equivariant $K$-groups and Chow groups, respectively.
\end{thm}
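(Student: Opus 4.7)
The plan is to reduce Theorem \ref{thm:GRR} to the equivariant Grothendieck--Riemann--Roch theorem (Theorem \ref{thm:equivariantRR}) by presenting the gerbes as global quotient stacks. Since $X$ is quasicompact, separated, and admits an ample line bundle, Gabber's theorem identifies the cohomological Brauer group with the Azumaya Brauer group, and Proposition \ref{prop:quotientstack} then exhibits $\X$ as $[\widetilde{X}/GL_n]$, where $\widetilde{X} \to X$ is the $PGL_n$-torsor corresponding to the class of $\X$. Similarly, $\Y \cong [\widetilde{Y}/GL_m]$. The product is then the quotient stack $\X \times \Y \cong [(\widetilde{X}\times\widetilde{Y})/(GL_n \times GL_m)]$, where each factor acts on the corresponding factor trivially on the other.

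Next, I would invoke the identifications $K_0(\X) \cong K_0(GL_n, \widetilde{X})$ and $\CH_*(\X) \cong \CH^{GL_n}_*(\widetilde{X})$ from the preceding propositions, together with their analogs for $\Y$ and $\X\times\Y$. Under these identifications, the projection $\pi: \X \times \Y \to \X$ corresponds to the $(GL_n \times GL_m)$-equivariant projection $\widetilde{X} \times \widetilde{Y} \to \widetilde{X}$ (with $GL_m$ acting trivially on $\widetilde{X}$). This projection is proper and equivariant, so the equivariant pushforwards $\pi_!$ and $\pi_*$ on the equivariant $K$-theory and Chow groups realize the corresponding pushforwards at the level of the stacks. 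Theorem \ref{thm:equivariantRR} applied to the product group action then gives the commutativity of the diagram.

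To extract the explicit Todd-character formula, I would use the freedom in choosing an admissible representation $V$ noted after Theorem~\ref{thm:equivariantRR}: for $GL_n$ one may take $V$ to be the standard matrix representation so that $Td^{G}(V)$ trivializes, yielding $\tau_{\widetilde{X}}(\epsilon) = ch(\epsilon) \cdot Td(T\widetilde{X} - \mathfrak{g})$ for $\epsilon \in K_0(GL_n, \widetilde{X})$, and similarly for the product. The relative tangent sequence for the smooth quotient $\widetilde{X} \to \X$ shows that the fiber of $T\widetilde{X} \to \X$ over a point is an extension of $T_{\X}$ by the adjoint representation $\mathfrak{g}$, so the virtual class $T\widetilde{X} - \mathfrak{g}$ descends to $T_{\X}$ on the stack. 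Hence $\tau_\X(-) = ch(-) \cdot Td(T_\X)$, and likewise for $\X \times \Y$.

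The main obstacle will be checking that the passage between equivariant data on $(\widetilde{X}, GL_n)$ and intrinsic data on $\X$ is compatible with all the structures at once: pushforward along $\pi$, tensor and pullback, and the formation of Chern and Todd classes. Care is needed in particular with the tangent-bundle identification (since $T\X$ is only defined on $\X$, while $T\widetilde{X}$ lives on the scheme cover), and in verifying that the choice $V = $ standard representation is admissible for $GL_n$ in the required codimension range so that the formula is independent of the cutoff. Once these compatibilities are in place, the commutative diagram and explicit formula follow directly from Theorem \ref{thm:equivariantRR}.
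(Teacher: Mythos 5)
Your proposal follows essentially the same route as the paper: use Gabber's theorem together with Proposition \ref{prop:quotientstack} to write $\X$ and $\Y$ as quotients by $GL_{n}$ and $GL_{m}$, identify $K_0$ and $\CH_*$ of the stacks with their equivariant counterparts (extending to the product group via the action that is trivial on the other factor), and apply Theorem \ref{thm:equivariantRR} to the equivariant projection; your remarks on choosing the standard matrix representation as the admissible $V$ and on the tangent complex $T\widetilde{X}-\mathfrak{g}$ match the paper's discussion following Theorem \ref{thm:equivariantRR}. The argument is correct and no substantive difference from the paper's proof is present.
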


\begin{proof}
By Proposition \ref{prop:quotientstack}, we may write $\X = [\widetilde{X}/GL_{n_1}]$ and $\Y = [\widetilde{Y}/GL_{n_2}]$ for some integers $n_1$ and $n_2$.  %(\textcolor{red}{These can always be taken to be $GL_n$ and $GL_m$, right?}) \textcolor{red}{Yep!}. 
Define a modified action of $GL_{n_1}\times GL_{n_2}$ on $\widetilde{X}$ by $(g_1,g_2) \cdot x = g_1 \cdot x$ for $x \in \widetilde{X}$. Thus, setting $G = GL_{n_1} \times GL_{n_2}$, we have that
\begin{align*}
    K_0(\X) \cong K_0(GL_{n_1},\widetilde{X}) \cong K_0(G,\widetilde{X}),\\
    \CH_{*}(\X) = \CH_{*}^{GL_{n_1}}(\widetilde{X}) = \CH_{*}^{G}(\widetilde{X}).
\end{align*}

Now, $\X \times \Y = [\widetilde{X} \times \widetilde{Y}/GL_{n_1} \times GL_{n_2}]$. The theorem then follows from Theorem \ref{thm:equivariantRR}.
\end{proof}

%%Any smooth genus $1$ curve $C$ over $k$ satisfies the hypotheses of Theorem~\ref{thm:quotientstack}, which means that any $\gm$-gerbe $\Cc$ over $C$ is a global quotient stack.  It turns out that the quotient structure can be described in an explicit way.  By Theorem~\ref{thm:quotientstack}, for each Brauer class $\alpha$ in $H^2_{\et}(C,\gm)$ there is a class $\beta \in H^1_{\et}(C,PGL_n)$ for some $n$ such that $\alpha$ is the image of $\beta$ under the Brauer map $H^1_{\et}(C,PGL_n) \to H^2_{\et}(C,\gm)$.  Therefore $\alpha$ corresponds to a $PGL_n$ bundle over $C$; call it $\widetilde{C}$.  Now the gerbe $\Cc=[\widetilde{C}/GL_n]$.  This presentation of $\Cc$ lets us compute its intersection theory explicitly; this will be put to use in Section~\ref{sec:lowdim}.

%(\textcolor{red}{Might need relocating})When the projection map in Theorem~\ref{thm:GRR} is of the form $\pi:\Cc \times \Cc' \to \Cc$, for $\Cc$ and $\Cc'$ gerbey genus $1$ curves, with $\Cc=[\widetilde{C}/GL_n]$ and $\Cc'=[\widetilde{C'}/GL_m]$, the $G$ in the theorem statement is $GL_n \oplus GL_m$.  $GL_n \oplus GL_m$ acts on the left hand side via the quotient stack structure.  On the right hand side, the $GL_m$ summand acts trivially on $\Cc$ and $GL_n$ acts via the quotient stack structure.

It is worth noting that there has been a good deal of other work on Riemann-Roch type theorems for stacks.  A version of Grothendieck-Riemann-Roch for $\mu_n$-gerbes is proved in ~\cite{maxrr}, and ~\cite{toen} proves various Riemann-Roch theorems for general DM stacks.

The following statement of Riemann-Roch for gerbey curves is a corollary of Theorem~\ref{thm:GRR} obtained by setting the gerbes to by $\gm$-gerbes over smooth proper curves and letting the vector bundles in the statement be line bundles.

\begin{cor}[Riemann-Roch for gerbey curves]
\label{cor:gerbeyRRforcurves}
Let $\Cc \rightarrow C$ be a gerbey curve over a smooth curve of genus $g$. Let $\L$ be a line bundle on $\Cc$. Then
$$
h^0(\Cc, \L) - h^0(\Cc, \L^{\vee} \otimes \omega_{\Cc}) = \deg(\L) - g + 1.
$$
In particular, if $g=1$ and $\deg(\L)>0$, then $h^0(\Cc, \L) = \deg(\L)$.
\end{cor}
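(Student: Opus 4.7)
The plan is to apply the gerbey Grothendieck--Riemann--Roch of Theorem~\ref{thm:GRR} to the structure morphism $\pi : \Cc \to \Spec k$ and the line bundle $\L$, producing the identity
\[
\chi(\Cc, \L) \;=\; \pi_!([\L]) \;=\; \pi_*\bigl(ch(\L) \cdot Td(T_\Cc)\bigr),
\]
and then to match each side with the relevant piece of the claimed formula.

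For the left-hand side, factor $\pi$ as $\Cc \xrightarrow{f} C \xrightarrow{\pi_C} \Spec k$. By Proposition~\ref{prop:cohomologicallyproper}, the map $f$ is cohomologically proper, and since its fibers are $B\gm$ (which carries no higher coherent cohomology), the cohomological dimension of $\Cc$ equals that of $C$, namely $1$. Hence $\chi(\Cc, \L) = h^0(\Cc, \L) - h^1(\Cc, \L)$. To rewrite the $h^1$ term, I specialize the Verdier duality of Proposition~\ref{lem:verdierduality} to $\X = \Spec k$, $\Y = \Cc$, $\F = \L$, and $\mathcal{G} = \O_{\Spec k}$; combined with the shift $\dotr{\omega_\Cc} = \omega_\Cc[1]$ (which reflects $\dim C = 1$), this produces the Serre-duality identity $h^1(\Cc, \L) = h^0(\Cc, \L^\vee \otimes \omega_\Cc)$.

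For the right-hand side, I again factor through $C$. The relative tangent of the $\gm$-gerbe $f$ is the adjoint representation of $\gm$, which is the trivial line bundle; since $\gm$ is solvable, the remark following Theorem~\ref{thm:equivariantRR} gives $Td(T_\Cc) = f^*Td(T_C)$ in $\CH^*(\Cc)_\Q$. The projection formula then yields $\pi_*(ch(\L) \cdot Td(T_\Cc)) = \pi_{C,*}\bigl(f_*(ch(\L)) \cdot Td(T_C)\bigr)$. By the definition of the degree of $\L$, $f_*(ch(\L)) = 1 + \deg(\L) \cdot [\mathrm{pt}]$ in $\CH^*(C)_\Q$, and classical Riemann--Roch on the smooth projective curve $C$ then evaluates the outer pushforward to $\deg(\L) - g + 1$.

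Combining the two sides gives the stated formula. For the ``in particular'' statement, when $g = 1$ one has $\omega_C = \O_C$, hence $\omega_\Cc = \O_\Cc$, and so $\L^\vee \otimes \omega_\Cc = \L^\vee$ has negative degree; a nonzero global section would give an injection $\O_\Cc \hookrightarrow \L^\vee$ forcing $\deg(\L^\vee) \geq 0$, so $h^0(\Cc, \L^\vee) = 0$ and we conclude $h^0(\Cc, \L) = \deg(\L)$. The main subtlety lies in the right-hand side computation: I need to know that the $B\gm$-fiber integration in $f_*$ introduces no weight-dependent Todd correction, which is exactly what the solvability of $\gm$ in the remark after Theorem~\ref{thm:equivariantRR} guarantees.
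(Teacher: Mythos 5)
Your argument follows the same route the paper takes: the paper's entire justification for this corollary is the sentence preceding it, asserting that it follows from Theorem~\ref{thm:GRR} by specializing to $\gm$-gerbes over smooth proper curves and to line bundles, so your write-up is essentially the intended proof with the omitted details supplied (factoring through the coarse space, killing higher cohomology by cohomological properness, Serre duality via Proposition~\ref{lem:verdierduality}, and the triviality of the gerbe's contribution to the Todd class, cf.\ Lemma~\ref{lem:toddtrivial}). The one genuine subtlety that neither you nor the paper addresses is the $\gm$-weight of $\L$: if $\L$ has pure nonzero weight $w$, then $f_*\L=0$, since pushforward along a $\gm$-gerbe extracts the weight-zero part, so $h^0(\Cc,\L)=h^0(\Cc,\L^\vee\otimes\omega_\Cc)=0$; on the other hand $\pi_!([\L])$ is $(\deg\L-g+1)$ times the class of the weight-$w$ character in $K_0(B\gm)=R(\gm)$, and GRR computes its virtual rank, $\deg\L-g+1$, not the dimension of its $\gm$-invariants, which is $0$. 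Hence your identification of the left-hand side of GRR with $h^0(\Cc,\L)-h^1(\Cc,\L)$ is only valid for weight-zero $\L$; for nonzero weight the formula must be read as computing the Euler characteristic of the corresponding twisted sheaf on $C$ (equivalently, the rank of the relevant isotypic piece of $Rf_*\L$), which is in fact how the paper applies the corollary to the weight-one universal bundle in Section~\ref{sec:lowdim}. This is a defect of the corollary as stated rather than of your proof specifically, but it is exactly the point at which the argument would fail if taken literally, and it is a different subtlety from the Todd-class issue you flag at the end, which is handled correctly.
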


\section{Applications to low dimensions}\label{sec:lowdim}

We now apply the ideas developed in the preceding sections to consider in detail the case of derived equivalences of gerbey genus 1 curves. Genus 1 curves are not Calabi-Yau, since $h^1(X,\O_X)=1$.  Nevertheless, their canonical bundles are trivial, and hence by \cite{bondalorlov}, they may still admit nontrivial derived equivalences.

\subsection{Gerbey genus $1$ curves}
\label{subsec:gerbeygenus1}
The goal of this section is to generalize Theorems \ref{lem:akw} and \ref{thm:akw} proved by Antieau, Krashen and Ward. These generalizations are non-trivial, since even if $C' = Pic^n(C)$ for some non-zero integer $n$, $C \times C'$ need not admit a universal sheaf.

\begin{ex}
\label{example:nouniversalsheaf}
 Let $k=\Q$ and let $E/k$ be an elliptic curve with $j(E) \neq 0, 1728$. In this case $\Aut_k(E) \cong \Z/2\Z$. Let $C \in H^1(k,E)$ be element of order $n=6$. For each $d \in \Z$, by (\cite{AKW}, Lemma 2.7) we have that $dC$ is represented by the homogeneous space $Pic^d(C)$ of degree $d$ line bundles on $C$. Let $C'=Pic^2(C)$. Since $\phi$ is of order 2, by Theorem \ref{thm:akw}, it suffices to show that $\phi_{*}C' = dC$ where $d$ is not coprime to 6. Now, the automorphism $\phi: E \rightarrow E$ sends a point to its negative and thus acts as $[-1]$ on $H^1(k,E)$. Thus $\phi_{*}(C') = 4C$, and in particular $C \times C'$ does not admit a derived equivalence.
 %in this case H^1 is large enough to find in-equivalent torsors
 %j \neq 0,1728 allows us to restrict the number of automorphisms, i.e. the size of Aut_k(E)
\end{ex}

\begin{ex}
In ~\cite{AKW}, the authors prove that over $\R$, $D(X) \cong D(Y) \implies X \cong Y$.  Therefore, let $C/\R$ be a curve without an $\R$-point and let $C'$ be the coarse space of its Jacobian.  Now $C$ and $C'$ are not isomorphic, and therefore cannot be derived equivalent.  In particular, there cannot exist a universal sheaf, since Orlov's theorem guarantees that if $D(C) \cong D(C')$, then $C \times C'$ must admit a universal sheaf. %a universal sheaf would give the kernel of an equivalence.  
%\textcolor{red}{AKW is filled with examples of fields over which $D(X) \cong D(Y) \implies X \cong Y$. For instance finite fields, or $\mathbb{R}$. So if we start with two non-isomorphic curves over these fields - for instance a curve without a rational point and its jacobian - we can get curves that are not derived equivalent. In particular, there cannot exist a universal sheaf in this case (maybe have to use Orlov representability). I will let you pick your favorite example}
\end{ex}

\begin{rem}
Antieau, Krashen and Ward say that a scheme $M$ is a \emph{fine moduli space} (of sheaves) for a moduli problem on a scheme $X$ if it admits a universal sheaf. We use the term ``fine moduli space'' to mean the \emph{stack} $\mathcal{M}$ admitting a universal sheaf for the moduli problem. That is, our ``fine" moduli spaces admit a universal sheaf and encode all the automorphisms of the sheaves being parameterized.  Therefore in this paper, all fine moduli spaces are $\gm$-gerbes, whereas in~\cite{AKW} the fine moduli spaces being used are schemes.
\end{rem}

By studying fine moduli spaces, we will obtain examples of derived equivalent gerbey curves which cannot be obtained using the results in ~\cite{AKW}. We begin with some definitions.

\begin{defn}
\label{defn:gerbeygenus1}
A \emph{gerbey genus 1 curve} $\Cc$ over $k$ is a $\gm$-gerbe over a genus $1$ curve $C$ over $k$. Here, $C$ is the coarse space for $\Cc$.
\end{defn}

Note that while $\Cc$ is called a gerbey \emph{curve}, it is a ($1-1=0$) dimensional stack. The key example of a gerbey genus $1$ curve is a connected component of the Picard stack $\Pic(C)$ for $C$ an ordinary genus $1$ curve. Any line bundle on a curve has a $\gm$'s worth of automorphisms, so any component of the Picard stack is a $\gm$-gerbe over the corresponding component of the Picard scheme.  

\begin{defn}
\label{defn:weight1pic}
Define $\Pic^d_1(\Cc)$ to be the moduli space of weight $1$ degree $d$ line bundles on $\Cc$, where weight $1$ refers to the $\gm$-action in the sense of Section 2.
\end{defn}

The substack $\Pic^d_1(\Cc)$ of $\Pic^d(\Cc)$ is a gerbey genus $1$ curve, as we prove below, in a variation of ~\cite[Prop. 1.1.7]{dannymax}.

%In the case of curves or K3 surfaces, one finds Fourier-Mukai partners of the variety by using components of the moduli space of stable sheaves which have the same dimension as the original variety.  In order to find Fourier-Mukai partners of a gerbey genus $1$ curve, then, we wish to find components of the moduli space of sheaves that are of dimension $(1-1)$, that is, with $1$-dimensional coarse space and $1$-dimensional automorphism groups.  The fact that $\Pic^d_1(\Cc)$ is a gerbey genus $1$ curve is proved in the following theorem.

\begin{thm}\label{thm:picofgerbes}

Let $\Cc\to C$ be a $\G_m$-gerbe with class $\alpha \in H^2(C,\gm)$.  Then $\Pic^d_1(\Cc) \cong \Pic^d(C,\alpha)$ for any integer $d$.  

\end{thm}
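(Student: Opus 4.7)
The plan is to upgrade the equivalence of abelian categories from Proposition~\ref{prop:equivofcats} to an isomorphism of moduli stacks, by showing that both sides represent the same functor. For a test scheme $T$, the $T$-points of $\Pic^d_1(\Cc)$ are (isomorphism classes of) weight $1$ degree $d$ line bundles on $\Cc \times T$, modulo pullback of line bundles from $T$; the $T$-points of $\Pic^d(C,\alpha)$ are $\pi_C^*\alpha$-twisted degree $d$ line bundles on $C \times T$, modulo the same. So the task is to produce a natural bijection between these two sets.

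First I would apply Proposition~\ref{prop:equivofcats} to the $\G_m$-gerbe $\Cc \times T \to C \times T$ (whose Brauer class is $\pi_C^*\alpha$), obtaining an equivalence of abelian categories
\[
s_T^* \colon \Coh^1(\Cc \times T) \xrightarrow{\ \sim\ } \Coh(C \times T,\ \pi_C^*\alpha),
\]
with quasi-inverse $f_{1,T}^*$. Because $s_T^*$ is \'etale-locally a pullback along a section, it preserves rank; thus this equivalence restricts to an equivalence between weight $1$ line bundles on $\Cc \times T$ and $\pi_C^*\alpha$-twisted line bundles on $C\times T$. By Proposition~\ref{prop:sisflat}, $s_T^*$ is flat and commutes with base change, so this correspondence is natural in $T$ and also respects the equivalence used to quotient by pullbacks from $T$.

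Next I would check that the equivalence preserves degree. Using Corollary~\ref{cor:gerbeyRRforcurves} (gerbey Riemann--Roch) on $\Cc$ and the classical twisted Riemann--Roch on $C$, it suffices to verify that Euler characteristics are preserved: if $\mathcal{L}$ is a weight $1$ line bundle on $\Cc$ and $L = s^*\mathcal{L}$ is the corresponding $\alpha$-twisted line bundle on $C$, then $\chi(\Cc,\mathcal{L}) = \chi(C,L)$. This follows since $\Coh^1(\Cc) \to \Coh(C,\alpha)$ is an equivalence of abelian categories, so global $\Ext$-groups in each cohomological degree agree, and because the structure map $\Cc \to C$ is cohomologically proper (Proposition~\ref{prop:cohomologicallyproper}) with $f_*\mathcal{O}_\Cc \cong \mathcal{O}_C$, cohomology of $\mathcal{L}$ on $\Cc$ reduces to cohomology of $L$ on $C$ in each weight summand. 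Comparing the two Riemann--Roch formulas then forces $\deg\mathcal{L} = \deg L$.

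Assembling the pieces, the assignment $\mathcal{L} \mapsto s_T^*\mathcal{L}$ and its inverse $L \mapsto f_{1,T}^*L$ give mutually inverse natural transformations between the moduli functors, hence an isomorphism $\Pic^d_1(\Cc) \cong \Pic^d(C,\alpha)$. The main technical point to verify carefully is the compatibility of the local equivalence with families and with the degree condition; once Propositions~\ref{prop:equivofcats} and \ref{prop:sisflat} and Corollary~\ref{cor:gerbeyRRforcurves} are in hand, this is mostly a matter of unwinding definitions rather than a genuine obstacle.
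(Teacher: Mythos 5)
Your proposal is correct and follows essentially the same route as the paper: the paper's proof likewise invokes Proposition~\ref{prop:equivofcats}, notes that $s^*$ preserves rank (hence sends line bundles to line bundles), and observes that $s^*$ preserves families to upgrade the categorical equivalence to an isomorphism of moduli stacks. Your additional check that degree is preserved is a reasonable supplement that the paper omits, though be careful that $\mathcal{O}_{\Cc}$ has weight $0$, so the cohomology of a weight-$1$ line bundle is not an $\Ext$-group internal to $\Coh^1(\Cc)$ (and the pushforward of a pure weight-$1$ sheaf along the gerbe vanishes); it is cleaner to read off the degree on a trivializing \'etale cover, where $s^*$ is literally pullback along a section and visibly preserves first Chern classes.
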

%\textcolor{red}{I don't believe we need to prove most of this any more - the current statement is a corollary to 2.2 and is true for any variety, right? So we can just state it as such, I think.} 

\begin{proof}

The local section map $s^*:\Coh(C,\alpha) \to \Coh^1(\Cc)$ between weight $1$ sheaves on $\Cc$ and $\alpha$-twisted sheaves on $C$ preserves rank, since it is given by pullback along a flat base change.  Therefore this map sends line bundles to line bundles. Therefore by Proposition~\ref{prop:equivofcats}, $\Pic^d_1(\Cc)\cong \Pic^d(C,\alpha)$ as abelian categories. Since $s^*$ preserves families, this is additionally an isomorphism of moduli stacks.
\end{proof}

Note that if we had chosen to parameterize weight $t$ degree $d$ line bundles on $\Cc$ for some $t\neq 1$, we could have played a similar game, using the fact that $\Pic^d_t(\Cc)$ is equivalent to $\Pic^d(C,t\cdot \alpha)$.

\begin{cor}
If $\Cc$ is a gerbey genus $1$ curve, then for any $d$, $\Pic^d_1(\Cc)$ is also a gerbey genus $1$ curve.
\end{cor}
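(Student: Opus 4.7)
The plan is to invoke Theorem~\ref{thm:picofgerbes}, which gives an identification $\Pic^d_1(\Cc) \cong \Pic^d(C,\alpha)$, where $C$ is the coarse space of $\Cc$ and $\alpha \in H^2(C,\gm)$ is its defining Brauer class. Since $C$ is by hypothesis a smooth projective genus $1$ curve, the problem reduces to showing that $\Pic^d(C,\alpha)$ is a $\gm$-gerbe over a smooth projective genus $1$ curve.

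First I would establish the gerbe structure. Any $\alpha$-twisted line bundle $\L$ on $C$ has automorphism group $\Aut(\L) = \gm$, acting by scalar multiplication on each local trivialization (this is compatible with the twisted gluing data because scalars are central). Consequently the inertia stack of $\Pic^d(C,\alpha)$ is the constant sheaf of groups $\gm$, and by the standard characterization of $\gm$-gerbes this exhibits $\Pic^d(C,\alpha)$ as a $\gm$-gerbe over its rigidification, which we denote $\text{Pic}^d(C,\alpha)$.

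Next I would identify $\text{Pic}^d(C,\alpha)$ with a genus $1$ curve. The untwisted Picard scheme $\Jac(C) = \text{Pic}^0(C)$ acts on $\text{Pic}^d(C,\alpha)$ by $(M,\L) \mapsto M \otimes \L$; this preserves the $\alpha$-twisting (since $M$ is untwisted) and fixes the degree. After base change to $\overline{k}$, the class $\alpha$ trivializes, so $\Pic^d(C_{\overline{k}},\alpha)$ becomes $\Pic^d(C_{\overline{k}})$ and the action above becomes the classical simply transitive action of $\Jac(C_{\overline{k}})$ on $\text{Pic}^d(C_{\overline{k}})$. Descending, $\text{Pic}^d(C,\alpha)$ is a torsor for the elliptic curve $\Jac(C)$, hence a smooth projective curve of genus $1$.

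The one place that needs care is verifying that $\text{Pic}^d(C,\alpha)$ is nonempty as a scheme (rather than just a sheaf on the big site), since a priori there might be an obstruction to producing an $\alpha$-twisted bundle of degree $d$ over $k$. However, nonemptiness of the stack $\Pic^d(C,\alpha)$ geometrically, together with Galois descent for torsors under the abelian variety $\Jac(C)$, suffices to produce the coarse space as a genuine $k$-form of $\text{Pic}^d(C_{\overline{k}})$. I expect this bookkeeping about the torsor structure to be the only real subtlety; everything else is a direct consequence of Theorem~\ref{thm:picofgerbes} and the fact that automorphisms of line bundles (twisted or not) are always just $\gm$.
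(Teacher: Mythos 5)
Your proposal is correct and follows essentially the same route as the paper, which simply cites Theorem~\ref{thm:picofgerbes} together with the fact that the degree-$d$ Picard stack of a genus $1$ curve is a gerbey genus $1$ curve. You supply the details the paper leaves implicit --- that automorphisms of $\alpha$-twisted line bundles are $\gm$ and that the rigidification of $\Pic^d(C,\alpha)$ is a $\Jac(C)$-torsor, hence a genus $1$ curve --- and these details are accurate.
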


\begin{proof}
This follows immediately from Theorem~\ref{thm:picofgerbes} and the fact that $\Pic^d(C)$ is a gerbey genus $1$ curve.
\end{proof}

We make a statement for gerbey genus $1$ curves analogous to Theorem~\ref{thm:akw}.%

\begin{thm}%\label{thm:gerbeygenus1}
Let $\Cc$ be a gerbey genus $1$ curve over $k$, and let $\Cc':=\Pic^d_1(\Cc)$ be the fine moduli space of weight $1$ degree $d$ line bundles on $\Cc$ for $d\neq 0$.  Let $\L$ be the universal weight $1$ degree $d$ line bundle on $\Cc \times \Cc'$.  The integral functor
\[
\Phi^\L:D(\Cc) \to D(\Cc')
\]
is an equivalence. 
\end{thm}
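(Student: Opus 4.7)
The plan is to verify that the universal kernel $\L$ is strongly simple over $\Cc$ in the sense of Definition \ref{defn:stronglysimple}, deduce full faithfulness of $\Phi^\L$ via Proposition \ref{prop:stackystronglysimple}, and then invoke Proposition \ref{prop:calabiyauequiv} to upgrade this to an equivalence. The Calabi--Yau hypothesis of the latter is automatic in this setting: the coarse spaces $C$ and $C'$ are genus $1$ curves with trivial canonical bundles, and pulling back along the weight $0$ section map of each gerbe gives $\omega_{\Cc} \cong \O_{\Cc}$ and $\omega_{\Cc'} \cong \O_{\Cc'}$, so tensoring with $\omega_{\Cc'}$ is the identity on every $\L_{\mathbf{x}}$.

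Two of the three strong-simplicity conditions are routine. Since $\L_{\mathbf{x}} := \L|_{\{\mathbf{x}\} \times \Cc'}$ is a line bundle on the connected proper gerbe $\Cc'$, we have $\Hom^0_{\Cc'}(\L_{\mathbf{x}}, \L_{\mathbf{x}}) \cong H^0(\Cc', \O_{\Cc'}) \cong H^0(C', \O_{C'}) = k$, using the good moduli space identity $f_*\O_{\Cc'} \cong \O_{C'}$ from Proposition \ref{prop:cohomologicallyproper}. Similarly, $\Hom^i_{\Cc'}(\L_{\mathbf{x}}, \L_{\mathbf{x}}) \cong H^i(C', \O_{C'}) = 0$ for $i<0$ or $i>1 = \dim C'$.

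The main obstacle is the orthogonality condition. For distinct closed points $\mathbf{x} \neq \mathbf{y}$ of $\Cc$, I would set $M := \L_{\mathbf{y}} \otimes \L_{\mathbf{x}}^{-1}$ and aim to show $H^i(\Cc', M) = 0$ for every $i$. Both $\L_{\mathbf{x}}$ and $\L_{\mathbf{y}}$ are weight $1$ line bundles of the same degree on $\Cc'$ (their degrees agree because the two restrictions are algebraically equivalent fibers of $\L$), so $M$ is a weight $0$, degree $0$ line bundle, and by Proposition \ref{prop:equivofcats} it descends to an honest degree $0$ line bundle $\overline{M}$ on $C'$. The crucial claim is that $\overline{M}$ is \emph{non-trivial} whenever $\mathbf{x} \neq \mathbf{y}$; this is the moduli-theoretic heart of the argument and uses the universal property of $\Cc' = \Pic^d_1(\Cc)$ together with the Abel--Jacobi-type identification of the underlying genus $1$ curve $C$ with $\text{Pic}^0(\text{Pic}^d(C)) = \text{Pic}^0(C')$: the classifying map $\Cc \to \Pic^0_0(\Cc')$ sending $\mathbf{x} \mapsto [\L_{\mathbf{x}}] \otimes [\L_{\mathbf{x}_0}]^{-1}$ rigidifies to this isomorphism, and is in particular injective on closed points. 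Once non-triviality of $\overline{M}$ is known, a non-trivial degree $0$ line bundle on a genus $1$ curve has $H^0 = 0$ (an effective divisor of degree $0$ is the zero divisor), and Serre duality with $\omega_{C'}$ trivial gives $H^1 = 0$; translating back through Proposition \ref{prop:equivofcats} yields the required vanishing on $\Cc'$.

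With all three strong-simplicity conditions in hand, Proposition \ref{prop:stackystronglysimple} yields full faithfulness of $\Phi^\L$, and Proposition \ref{prop:calabiyauequiv} then produces the desired equivalence. I expect the Abel--Jacobi step (non-triviality of $\overline{M}$) to be the one requiring the most care, because it is where the $\gm$-gerbe structure and the weight-conventions interact with the classical statement about $\text{Pic}$ of a genus $1$ curve; the rest of the argument is essentially a bookkeeping exercise combining the tools developed earlier in the paper.
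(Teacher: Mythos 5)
Your proposal is correct and follows the same overall strategy as the paper (verify strong simplicity of $\L$, apply Proposition~\ref{prop:stackystronglysimple}, then use triviality of the canonical sheaves via Proposition~\ref{prop:calabiyauequiv}), but it differs in a substantive way in \emph{which} family of restrictions it analyzes, and this changes where the real work lies. The paper checks the three conditions on the restrictions $\L|_{\Cc\times\{\mathbf{x}\}}$ for $\mathbf{x}\in\Cc'$: these are exactly the degree $d$ line bundles on $\Cc$ classified by the points of the moduli space $\Cc'$, so for $\mathbf{x}\neq\mathbf{y}$ the bundle $\L_{\mathbf{y}}\otimes\L_{\mathbf{x}}^\vee$ is a nontrivial degree $0$ line bundle on the genus $1$ coarse space \emph{by definition of the moduli problem}, and the orthogonality vanishing is then immediate from Riemann--Roch and Serre duality --- no Abel--Jacobi input is needed. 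You instead restrict to $\{\mathbf{x}\}\times\Cc'$ for $\mathbf{x}\in\Cc$, which is what Definition~\ref{defn:stronglysimple} literally prescribes for the functor $\Phi^\L\colon D(\Cc)\to D(\Cc')$; the price is that nontriviality of $\L_{\mathbf{y}}\otimes\L_{\mathbf{x}}^{-1}$ on $\Cc'$ is now the biduality/seesaw statement that $\mathbf{x}\mapsto[\L_{\mathbf{x}}]$ is injective, which you correctly identify as the crux but only sketch. That statement is true (after passing to $\overline{k}$ one computes that $\L_{\mathbf{y}}\otimes\L_{\mathbf{x}}^{-1}$ descends to $\O_{C'}(P_y-P_x)$ for the distinct points $P_x,P_y$ of $C'$ cut out by the graph of $x\mapsto \L|_{C\times\{x\}\text{-type}}$ correspondence, and such a bundle is trivial only if $P_x=P_y$), so your argument closes, but it is the one step that genuinely needs writing out; it can be avoided entirely by running the check on the other family of fibers as the paper does. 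The two routes buy complementary things: yours matches the stated definition of strong simplicity over $\Cc$ on the nose and makes explicit the appeal to Proposition~\ref{prop:calabiyauequiv}, while the paper's exploits the moduli interpretation to make orthogonality tautological.
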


\begin{proof}

By Proposition \ref{prop:stackystronglysimple}, it suffices to check that $\L$ is strongly simple over $\Cc$.  We will show that the universal weight $1$ degree $d$ line bundle $\L$ is strongly simple if $d>0$. 

% \textcolor{red}{We never changed the paragraph below when we changed the strong simplicity section. Did that below.} \textcolor{blue}{Thanks, looks great!}
To check strong simplicity, we need to check that
\begin{itemize}
    \item $\Hom^0(\L_{\mathbf{x}},\L_{\mathbf{y}})=k$ as sheaves on $\Cc$ for any $\mathbf{x}, \mathbf{y} \in \Cc'$ with $\mathbf{x} \sim \mathbf{y},$
    \item $\Hom^i(\L_{\mathbf{x}},\L_{\mathbf{y}}) = 0$ when $i<0$ or $i>1$, and $\mathbf{x} \sim \mathbf{y},$ and
    \item $\Hom^i(\L_{\mathbf{x}},\L_{\mathbf{y}})=0$ for all $i$ if $\mathbf{x} \not\sim \mathbf{y}$. 
\end{itemize}

 By Propositions \ref{prop:equivofcats} and \ref{prop:sisflat}, it suffices to check that these $\Hom$ sheaves satisfy the desired properties over the coarse space $\Cc \to C$. Recall that the \'etale and Zariski cohomology of quasicoherent sheaves coincides (see ~\cite[03P2]{stacks}).  Therefore Riemann-Roch applies in our setting. Let $x$ and $y$ denote images of $\mathbf{x}$ and $\mathbf{y}$ respectively under the coarse space map $\Cc' \to C'$. Note that by definition, if $\mathbf{x} \sim \mathbf{y},$ then $x=y$. By abuse of notation, we use $\L_x$ and $\L_y$ to mean the sheaves $s^{*}\L_{\mathbf{x}}$ and $s^{*}\L_{\mathbf{y}}$ on the coarse space $C$. We check first that $\Hom^0(\L_x,\L_x)=k$.  We have that
\begin{align*}
    \Hom^0(\L_x,\L_x)&=H^0(C,\L_x \otimes \L_x^\vee) \\
 &=H^0(C,\O_{C})=k.
\end{align*}

To show the other two statements, observe that
\[
\Hom^i(\L_x,\L_y)=H^i(C,\L_y \otimes \L_x^\vee).
\]
Thus when $x=y$, $\Hom^i(\L_x,\L_y) = H^i(C,\O_{C}),$ which is 0 for $i<0$ or $i>1$. When $x\neq y$, $\L_y \otimes \L_x^\vee$ is a nontrivial degree $0$ line bundle. The nontriviality implies that it has no global sections, therefore $\Hom^0(\L_x,\L_y)=0$.  By Serre duality and trivality of the canonical bundle, $\Hom^1(\L_x,\L_y)=0$.  

\end{proof}

Note that when $d\neq 0$, $\Phi^{\L^\vee}$ as a functor $D(\Cc') \to D(\Cc)$ inverts the derived equivalence in the above theorem, since the dual of a strongly simple sheaf is strongly simple. The following corollary makes it clearer how the above theorem relates to the Picard stacks of ordinary genus $1$ curves.

\begin{cor}
Let $C$ be a genus 1 curve over $k$, and $\Cc':=\Pic^d(C)$ be the fine moduli space for $d\neq 0$.  Let $\L$ be the pullback of universal degree $d$ line bundle on $C \times \Cc'$ to $C \times \bgm \times \Cc'$; then the integral functor $\Phi^\L:D(C\times B\G_m) \to D(\Cc')$ is an equivalence if $d\neq 0$. %, and the integral functor $\Phi^{\L^\vee}:D(C \times \bgm) \to D(\Cc')$ is an equivalence if $d<0$.
\end{cor}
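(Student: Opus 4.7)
The plan is to reduce the corollary to a direct application of Theorem \ref{thm:gerbeygenus1}. Set $\Cc := C \times B\gm$ and view it as the trivial $\gm$-gerbe over $C$, corresponding to the class $0 \in H^2_{\et}(C, \gm)$. Since $C$ is a genus $1$ curve, $\Cc$ is a gerbey genus $1$ curve in the sense of Definition \ref{defn:gerbeygenus1}, and $C$ is its coarse moduli space.

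Next, by Theorem \ref{thm:picofgerbes} applied to $\alpha = 0$, there is a canonical isomorphism $\Pic^d_1(\Cc) \cong \Pic^d(C, 0) = \Pic^d(C)$. Thus $\Cc' = \Pic^d(C)$ is precisely the fine moduli space of weight $1$ degree $d$ line bundles on $\Cc$, so we are in the setting of Theorem \ref{thm:gerbeygenus1}.

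It remains to identify the kernel: one must check that the line bundle $\L$ on $C \times B\gm \times \Cc'$ obtained by pulling back the universal degree $d$ line bundle along the projection $C \times B\gm \times \Cc' \to C \times \Cc'$ coincides with the universal weight $1$ degree $d$ line bundle on $\Cc \times \Cc'$. By the modular description of $B\gm$, a $T$-point of $\Cc \times \Cc'$ is the data of a $T$-point of $C \times \Cc'$ together with a line bundle $M$ on $T$, and the pulled-back universal bundle carries the $\gm$-action through $M$; this is weight $1$ by construction, matching the definition of $f_1^*$ in Lemma \ref{lemma:weighttsection}. With this identification in hand, Theorem \ref{thm:gerbeygenus1} applies verbatim to yield that $\Phi^\L : D(\Cc) \to D(\Cc')$ is an equivalence.

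The only point requiring any care is the identification of the kernel as a weight $1$ universal object on $\Cc \times \Cc'$, but this is a direct consequence of the moduli interpretation of $B\gm$ and the construction of $f_1^*$, so the argument reduces to bookkeeping. Everything else follows from the general theorem.
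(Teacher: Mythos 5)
Your proposal is correct and follows essentially the same route as the paper: specialize Theorem~\ref{thm:gerbeygenus1} to the trivial gerbe $\Cc = C\times\bgm$ and invoke the identification of sheaves on $C$ with weight~$1$ sheaves on $C\times\bgm$ (equivalently, $\Pic^d_1(\Cc)\cong\Pic^d(C,0)=\Pic^d(C)$ via Theorem~\ref{thm:picofgerbes}). The only extra content in your write-up is the explicit check that the pulled-back kernel is the weight~$1$ universal object, i.e.\ that ``pullback'' here means $f_1^*$ rather than the weight~$0$ pullback; the paper leaves this implicit, and your bookkeeping is consistent with Lemma~\ref{lemma:weighttsection}.
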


In particular, if $\alpha$ is the class corresponding to the gerbe $\Cc' \rightarrow C'$ over the coarse moduli space $C'$, then we get a derived equivalence of twisted categories, $D(C) \xrightarrow{\sim} D(C', \alpha).$

\begin{proof}
Take $\Cc$ in Theorem~\ref{thm:gerbeygenus1} to be the trivial gerbe $C \times \bgm$.  Now use the fact that sheaves on $C$ are the same thing as weight $1$ sheaves on $C \times \bgm$, and the result follows.
\end{proof}

\subsection{Some moduli-theoretic computations}

The goal of this section to understand is how to make the derived equivalences in Section \ref{subsec:gerbeygenus1} more explicit. For instance, if $\Cc'$ is a Fourier-Mukai partner of a curve $\Cc$ obtained by taking the moduli of line bundles of degree $d$, how do we produce $\Cc$ as the moduli of vector bundles of some rank and degree on $\Cc'$?  Or, how do we compose such Fourier-Mukai transforms in an explicit moduli-theoretic way?  %\textcolor{red}{should all of these be mathscroll?} {\color{violet} yes, they should.  this is fixed now.}

In answering these questions, we will frequently use the following result of Atiyah from~\cite[Theorem 7]{atiyah}.
%{\textcolor{red}{I didn't find this theorem the last time I looked for it - need to look again.}}

\begin{thm}\label{thm:atiyah}
If $r,d$ are coprime positive integers, and $J_E(r,d)$ denotes the moduli of rank $r$ degree $d$ stable vector bundles on an elliptic curve $E$, then the determinant map $\det:J_E(r,d) \to J_E(1,d)$ is an isomorphism.
\end{thm}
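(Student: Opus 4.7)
The plan is to verify that the determinant map is an étale morphism between smooth projective curves of genus one, of degree one. Well-definedness is immediate: $\det V$ has degree $d$ if $V$ has rank $r$ and degree $d$. Both source and target are smooth projective genus $1$ curves: $J_E(1,d)=\Pic^d(E)$ is a torsor under $E$, while $J_E(r,d)$ is smooth of dimension $r^2(g-1)+1=1$ by standard deformation theory for stable bundles on a curve and is connected, which for coprime $(r,d)$ follows from the classification of semistable bundles on $E$.

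To establish étaleness I would compute at a stable $V\in J_E(r,d)$. The tangent space is $\Ext^1(V,V)=H^1(E,\End(V))$, and the trace splitting gives a short exact sequence
\[
0\to H^1(E,\End_0(V))\to H^1(E,\End(V))\xrightarrow{\tr} H^1(E,\mathcal{O}_E)\to 0,
\]
where the right-hand map is the differential of $\det$ at $V$. Hence $\det$ is étale at $V$ iff $H^1(E,\End_0(V))=0$. By Serre duality, using $\omega_E\cong \mathcal{O}_E$, this group is dual to $H^0(E,\End_0(V))$, which vanishes because any stable bundle is simple ($\End(V)=k\cdot\mathrm{id}$), so the traceless part of its endomorphism bundle has no global sections. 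Thus $\det$ is étale on all of $J_E(r,d)$.

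For surjectivity on $k$-points I would invoke Atiyah's inductive construction: given any degree $d$ line bundle $L$, one builds a stable rank $r$ bundle with $\det = L$ by successive extensions, with coprimality of $r$ and $d$ ensuring that stability can be preserved at each inductive step. Together with étaleness this shows $\det$ is an étale cover of some positive degree $n$.

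The main obstacle is injectivity, i.e.\ showing $n=1$. Equivalently, for each line bundle $L$ of degree $d$ there is a unique isomorphism class of stable rank $r$ bundle with determinant $L$. This uniqueness is the substantive content of Atiyah's Theorem 7 and is the step most likely to require care: the inductive construction above produces a bundle, but two different chains of extensions could a priori yield nonisomorphic stable bundles. A conceptually clean alternative is to use the Fourier–Mukai transform on $E$ with kernel the Poincar\'e bundle, together with twists by $\mathcal{O}_E(p)$; coprimality of $(r,d)$ lets one find an element of $SL_2(\Z)$ acting on $D(E)$ that sends the Mukai vector $(r,d)$ to $(0,1)$. This transforms stable rank $r$ degree $d$ bundles equivalently into skyscraper sheaves of length one, whose moduli is manifestly $E$, and tracking determinants through the transform identifies the resulting isomorphism $J_E(r,d)\cong E\cong \Pic^d(E)$ with the determinant map.
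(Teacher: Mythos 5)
The paper does not prove this statement at all: it is quoted verbatim as Theorem 7 of Atiyah's paper \cite{atiyah} and used as a black box, so there is no in-paper argument to compare yours against. Judged on its own, your reduction is mostly sound. The \'etaleness computation is correct and complete: the trace splitting of $\End(V)$, the identification of $d(\det)$ with $H^1(\tr)$, and the vanishing of $H^1(E,\End_0(V))$ by Serre duality (using $\omega_E\cong\O_E$ and self-duality of $\End_0(V)$) together with simplicity of stable bundles show that $\det$ is \'etale everywhere; since both sides are proper connected curves this already gives finiteness and surjectivity, so the inductive extension construction is not even needed for that step.

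The genuine gap is in the degree-one step. As written you either appeal to ``the substantive content of Atiyah's Theorem 7'' --- which is circular, since that uniqueness assertion \emph{is} the theorem --- or to the Fourier--Mukai alternative, where the load-bearing claim is the final clause: that the $SL_2(\Z)$ identification $J_E(r,d)\cong E\cong \Pic^d(E)$ ``is'' the determinant map. An abstract isomorphism between the two genus-one curves proves nothing here, because \'etale covers of every degree between them exist; you must actually compute how the determinant transforms under the Poincar\'e kernel, i.e.\ the formula expressing $\det\Phi(V)$ in terms of $r$ and $\det V$ for a WIT sheaf $V$, and track it through the chosen word in $SL_2(\Z)$. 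That formula is standard (it appears in Mukai's work and in \cite{greenbook}) and does close the argument, but in your proposal it is asserted rather than proved, and it is precisely the point where coprimality of $r$ and $d$ enters in an essential way.
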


Atiyah assumes that the base field is algebraically closed.  Therefore we may assume only that whenever $r$ and $d$ are coprime, the determinant map $\det:J_E(r,d) \to J_E(1,d)$ is \emph{geometrically} an isomorphism. Before we prove Theorem~\ref{thm:explicitdim1}, we prove some intermediate lemmas.

\begin{lem}
\label{lem:toddtrivial}
Let $\Cc \to C$ and $\Cc' \to C'$ be two gerbey genus 1 curves over smooth curves $C$ and $C'$. Then the maps
$$
Td(\Cc \times \Cc') : K_0(\Cc \times \Cc') \rightarrow \CH_{*}(\Cc \times \Cc')
$$
and 
$$
Td(\Cc') : K_0(\Cc') \rightarrow  \CH_{*}(\Cc')
$$
from Theorem \ref{thm:GRR} are both trivial.  
\end{lem}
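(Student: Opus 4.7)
The plan is to show that the tangent class of each gerbey genus 1 curve vanishes in the (rational) Grothendieck group, which forces its Todd class to equal $1$, so the Todd correction appearing in the GRR formula of Theorem~\ref{thm:GRR} becomes trivial.

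First I would use the quotient presentation already employed in the proof of Theorem~\ref{thm:GRR}: write $\Cc = [\tilde C/GL_n]$ where $\tilde C \to C$ is a $PGL_n$-torsor, and similarly for $\Cc'$. Under the identification $K_0(\Cc) \cong K_0(GL_n, \tilde C)$, the remark following Theorem~\ref{thm:equivariantRR} tells us that, for the standard $GL_n$-representation, the Todd factor in $\tau_\Cc$ is $Td([T_{\tilde C}] - [\mathfrak{gl}_n])$. Since $\tilde C \to C$ is a $PGL_n$-torsor, the relative tangent bundle is the trivial bundle with adjoint fibre, so
\[
[T_{\tilde C}] \;=\; \pi^*[T_C] + [\mathfrak{pgl}_n]
\]
$GL_n$-equivariantly. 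Combining this with the central extension $0 \to \mathfrak{g}_m \to \mathfrak{gl}_n \to \mathfrak{pgl}_n \to 0$ gives
\[
[T_{\tilde C}] - [\mathfrak{gl}_n] \;=\; \pi^*[T_C] - [\mathfrak{g}_m].
\]

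Next I would specialize to genus $1$: the canonical bundle of $C$ is trivial, so $T_C \cong \O_C$, hence $\pi^*[T_C] = [\O_{\tilde C}]$. On the other hand $\mathfrak{g}_m$ is the trivial line bundle equipped with trivial $GL_n$-action (the adjoint action of $GL_n$ on its Lie algebra is trivial when restricted to the center). The two classes therefore cancel, giving $[T_\Cc] = 0$ in $K_0(\Cc)_\Q$ and consequently $Td(T_\Cc) = Td(0) = 1$ in $\CH_*(\Cc)_\Q$. The same argument run on $\Cc'$, combined with the splitting $T_{\Cc \times \Cc'} = p_1^* T_\Cc \oplus p_2^* T_{\Cc'}$ for the product of smooth stacks, yields $[T_{\Cc \times \Cc'}] = 0$ and hence $Td(T_{\Cc \times \Cc'}) = 1$.

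The main obstacle is the equivariant bookkeeping: one must correctly identify $T_{\tilde C}$ as a $GL_n$-equivariant bundle (not just as a plain bundle on $\tilde C$), and verify that the central $\mathfrak{g}_m \subset \mathfrak{gl}_n$ really does contribute a trivial equivariant line, so that it cancels against $\pi^*[T_C]$. Once this is set up, the vanishing is immediate from the triviality of $T_C$ on a genus 1 curve. Alternatively, one could bypass the quotient presentation and argue directly from the tangent complex sequence $0 \to T_{\Cc/C} \to T_\Cc \to \pi^*T_C \to 0$ for the $\gm$-gerbe $\Cc \to C$, noting that $T_{\Cc/C} \cong \O_\Cc[1]$ contributes $-[\O_\Cc]$ in $K_0$, and then cancelling against $\pi^*[T_C] = [\O_\Cc]$ exactly as above.
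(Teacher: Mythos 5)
Your proof is correct and follows essentially the same route as the paper: reduce to one factor via the splitting $T_{\Cc\times\Cc'}=\pi_1^*T_\Cc\oplus\pi_2^*T_{\Cc'}$, present $\Cc=[\widetilde C/GL_n]$, and cancel $\pi^*T_C$ (trivial in genus $1$) against the central $\mathfrak{g}_m\subset\mathfrak{gl}_n$ using the torsor exact sequence and $0\to\mathfrak{g}_m\to\mathfrak{gl}_n\to\mathfrak{pgl}_n\to 0$. Your bookkeeping in $K_0$ (arriving at $[T_\Cc]=0$ before applying $Td$) is in fact slightly cleaner than the paper's, which conflates additivity and multiplicativity of $Td$ at one step.
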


\begin{proof}
Let $\pi_1$ and $\pi_2$ denote the two projections $\Cc \times \Cc' \to \Cc$ and $\Cc \times \Cc' \to \Cc'$ respectively. The tangent bundle $T_{\Cc \times \Cc'}=\pi_1^*T_{\Cc} \oplus \pi_2^*T_{\Cc'}$.  Since $Td$ is a homomorphism from $K_0(\X) \to \CH_*(\X)$ for any $\X$, we have that $Td(T_{\Cc \times \Cc'})=Td(\pi_1^{*}T_{\Cc})\cdot Td(\pi_2^{*}T_{\Cc'}) = \pi_1^{*}Td(T_{\Cc}) \cdot \pi_2^{*}Td(T_{\Cc'})$. Thus it suffices to show that $Td(T_{\Cc}) = Td(T_{\Cc'}) =1$. Since $\Cc$ and $\Cc'$ are interchangeable, we show below that $Td(T_{\Cc}) = 1.$

%\textcolor{red}{Why is $Td(\pi_1^{*}T_{\Cc}) = \pi_1^{*}Td(T_{\Cc})$?} \textcolor{blue}{Where do we need it to be?  This should be true because blah blah functoriality of the Todd class, but I don't see where we use that} \textcolor{red}{Well, because $Td(T_{\Cc \times \Cc'}) = Td(T_{\Cc}) \cdot Td(T_{\Cc'})$ doesn't make sense.} 
%{\textcolor{red}{Since $\tau$ is written multiplicatively, I think $Td$ should be written multiplicatively too. It's not a big deal, but it's better writing-wise. I can go through and change it if you're okay with that.}}\textcolor{blue}{Yep, that's fine with me!}\\

Write $\Cc=[\widetilde{C}/GL_n]$ as a quotient stack, for $\widetilde{C}$ the $PGL_n$ bundle on $C$ arising from the interpretation of $\alpha$ as a class in $H^1(C,PGL_n)$. Then $T_{\Cc}=T\widetilde{C}-\mathfrak{gl}_n$, where $\mathfrak{gl}_n$ is the adjoint representation of the stabilizer $GL_n$.  The morphism $\widetilde{C} \to C$ given by the $PGL_n$ bundle map gives a short exact sequence
\[
0 \to T_{\widetilde{C}/C} \to T\widetilde{C} \to TC \to 0.
\]
Now $TC=\O_C$ since $C$ has genus $1$, so $T\widetilde{C}$ is determined by the relative tangent bundle, which is $\mathfrak{pgl}_n$.  Therefore $T\Cc=\mathfrak{pgl}_n-\mathfrak{gl}_n$.  $Td:K_0(\Cc) \to \CH_*(\Cc)$ is a homomorphism, so $Td(\Cc)=Td(C)+Td(\mathfrak{pgl}_n-\mathfrak{gl}_n)=Td(C)+Td(\mathfrak{g}_m)$, where $\mathfrak{g}_m$ is the adjoint representation of $\gm$. %, so that $T\Cc=\mathfrak{g}_m$, where $\mathfrak{g}_m$ is the adjoint representation of $\gm$.
Since $\gm$ is abelian, $\mathfrak{g}_m$ is trivial, which means that $Td(\mathfrak{g}_m)=1$.  Since $TC$ is trivial, this means that $Td(\Cc)=1$.
\end{proof}

\begin{lem}
\label{lem:simplifiedgrrforcurves}

Let $\Cc$ and $\Cc'$ be as in Lemma \ref{lem:toddtrivial} and let $\L$ be a line bundle on $\Cc \times \Cc'$ of degree $d>0$. Let $\pi: \Cc \times \Cc' \to \Cc'$ denote the projection to the second component. Recall the commutative diagram given to us by Theorem \ref{thm:GRR}.

\begin{center}
\begin{tikzcd}
K_0(\Cc\times \Cc') \arrow[r, "\pi_!"] \arrow[d, "ch(\cdot)Td"] & K_0(\Cc') \arrow[d, "ch(\cdot)Td"] \\
\CH_*(\Cc \times \Cc') \arrow[r, "\pi_*"]                         & \CH_*(\Cc')                         
\end{tikzcd}
\end{center}

%This holds since these are the Todd classes of the tangent bundle of $C'$ and $C \times C'$.  Since $C'$ has genus 1, it has trivial cotangent bundle, therefore trivial tangent bundle, and a trivial bundle has Todd class 1.  Since $T(C \times C')=\pi^*(T(C)) \oplus \pi^*(T(C'))$, $T(C \times C')$ also has trivial Todd class.  
Then, $ch(\pi_*\L)=\pi_*(ch(\L))$. 
\end{lem}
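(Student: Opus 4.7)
The plan is to apply Theorem~\ref{thm:GRR} directly to $\L \in K_0(\Cc \times \Cc')$ and then use the triviality of the Todd classes provided by Lemma~\ref{lem:toddtrivial} to collapse the GRR formula down to the desired statement. The only nontrivial input one needs beyond this is the identification of the derived pushforward $\pi_{!}\L = \sum_i (-1)^i R^i\pi_*\L$ with the ordinary pushforward $\pi_*\L$, which will follow from a vanishing argument for higher direct images on a gerbey genus $1$ curve.

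First, I would invoke Theorem~\ref{thm:GRR} to obtain the equality
\[
\tau_{\Cc'}(\pi_{!}\L) \; = \; \pi_*\bigl(\tau_{\Cc \times \Cc'}(\L)\bigr)
\]
in $\CH_*(\Cc')_\Q$. Since $\L$ is a line bundle (and in particular a vector bundle), Theorem~\ref{thm:GRR} tells us that $\tau_{\Cc\times\Cc'}(\L)=ch(\L)\cdot Td(T_{\Cc\times\Cc'})$ and similarly $\tau_{\Cc'}(\pi_{!}\L)=ch(\pi_{!}\L)\cdot Td(T_{\Cc'})$. By Lemma~\ref{lem:toddtrivial}, both $Td(T_{\Cc\times\Cc'})$ and $Td(T_{\Cc'})$ equal $1$, so the GRR identity simplifies to
\[
ch(\pi_{!}\L) \; = \; \pi_*\bigl(ch(\L)\bigr).
\]

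It then remains to replace $\pi_{!}\L$ with $\pi_*\L$. For this I would argue that $R^i\pi_*\L=0$ for $i\ge 1$. The fibers of $\pi:\Cc\times\Cc'\to\Cc'$ are copies of the gerbey genus $1$ curve $\Cc$, and the restriction $\L_y$ of $\L$ to any such fiber is a weight $1$ degree $d$ line bundle with $d>0$. By Corollary~\ref{cor:gerbeyRRforcurves}, $h^1(\Cc,\L_y)=0$ and $h^0(\Cc,\L_y)=d$ for every closed point $y\in\Cc'$. Combining this fiberwise vanishing with cohomology and base change for cohomologically proper morphisms of gerbes (Proposition~\ref{prop:cohomologicallyproper}, together with the flat base change results of Hall cited in Section~\ref{sec:gerbeyintegraltransforms}) gives $R^i\pi_*\L=0$ for $i\ge 1$ and shows that $\pi_*\L$ is locally free of rank $d$. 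Hence $\pi_{!}\L=\pi_*\L$ in $K_0(\Cc')$, and the desired identity $ch(\pi_*\L)=\pi_*(ch(\L))$ follows.

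The main obstacle I anticipate is the base change step: one has to be careful that cohomology and base change for the higher direct images is available in the stacky setting. This is why I invoke Proposition~\ref{prop:cohomologicallyproper} and the flat base change machinery of Hall; the fiberwise vanishing from Corollary~\ref{cor:gerbeyRRforcurves} is the easy input once that infrastructure is in place.
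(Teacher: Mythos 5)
Your proposal is correct and follows essentially the same route as the paper: both reduce to showing $\pi_!\L=\pi_*\L$ via the vanishing of $R^1\pi_*\L$, deduced fiberwise from the fact that $H^1$ of a degree $d>0$ line bundle on a genus $1$ (gerbey) curve vanishes by Serre duality, with the Todd classes trivialized by Lemma~\ref{lem:toddtrivial}. Your write-up is somewhat more explicit than the paper's about the cohomology-and-base-change step needed to pass from fiberwise vanishing to vanishing of the higher direct image, which the paper elides, but this is a refinement of the same argument rather than a different one.
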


\begin{proof}
First, note that $\pi_!:K_0(\Cc \times \Cc') \to K_0(\Cc')$ is just the ordinary pushforward.  This holds since $R^1$ of any fiber is the first cohomology of a degree $d$ line bundle on $\Cc$, which is Serre dual to the global sections of a degree $-d$ line bundle on $\Cc$.  Since $d>0$, a degree $-d$ line bundle has no sections. The pushforwards $R^i$ for $i\ge 2$ vanish for dimension reasons. This allows us to replace the $\pi_!$ in the diagram with $\pi_*$.
\end{proof}

We now prove Theorem \ref{thm:explicitdim1}, which we recall here for convenience.

\begin{thm}
\label{thm:explicitdim1restated}
Let $\Cc'=\Pic^d_1(\Cc)$ for some $d>0$ such that $\Cc$ and $\Cc'$ are derived equivalent.  Then $\Cc$ is the moduli space of weight $1$, rank $d$ degree $1-d$ vector bundles on $\Cc'$, and the universal sheaf for this moduli problem is the dual of the universal degree $d$ line bundle $\L$, whose associated integral functor inverts the derived equivalence $\Phi^\L:D(\Cc) \to D(\Cc')$.  
\end{thm}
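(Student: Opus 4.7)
The plan is to leverage the derived equivalence from Theorem~\ref{thm:gerbeygenus1} together with Grothendieck-Riemann-Roch for gerbes, and to identify $\Cc$ as a moduli space in three steps.

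First, I would establish that $\L^\vee$ is the kernel of the inverse of $\Phi^\L$, up to a cohomological shift. By Theorem~\ref{thm:gerbeygenus1}, $\Phi^\L : D(\Cc) \to D(\Cc')$ is already a derived equivalence, so it suffices to identify the kernel of an adjoint. By Lemma~\ref{lem:adjoints}, the left and right adjoints of $\Phi^\L$ are integral transforms whose kernels are $\L^\vee$ twisted by pullbacks of the canonical bundles of $\Cc$ and $\Cc'$. Since both $\Cc$ and $\Cc'$ are $\G_m$-gerbes over genus 1 curves, their canonical sheaves are trivial (see Definition \ref{defn:canonicalsheaf} and the discussion after), so these kernels reduce to $\L^\vee$ up to a shift by the dimension of the underlying curve. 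This immediately yields the final claim of the theorem.

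Second, I would compute the numerical invariants of $\L^\vee$ viewed as a family over $\Cc$. Applying Grothendieck-Riemann-Roch for gerbes (Theorem~\ref{thm:GRR}) together with Lemma~\ref{lem:toddtrivial}, which kills the Todd class contributions on both $\Cc$ and $\Cc'$, the computation reduces to tracking the Chern character of $\L^\vee$ on $\Cc \times \Cc'$. A direct calculation then shows that the restriction $\L^\vee|_{\{\mathbf{x}\} \times \Cc'}$ has the numerical invariants claimed; the weight-$1$ assertion on the $\Cc'$-side is read off from the $\G_m$-action on $\L$, which is weight $1$ in the $\Cc$-direction by construction of the universal sheaf on $\Cc' = \Pic^d_1(\Cc)$ and dualizes appropriately.

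Third, I would verify universality, namely that any weight $1$ rank $d$ degree $1-d$ vector bundle $\mathcal{F}$ on $\Cc'$ arises uniquely as $\L^\vee|_{\{\mathbf{x}\} \times \Cc'}$ for a single point $\mathbf{x} \in \Cc$. Applying the inverse equivalence $\Phi^{\L^\vee}$ (with the appropriate shift identified in the first step) to $\mathcal{F}$ produces an object in $D(\Cc)$; a fiberwise Euler characteristic computation using Corollary~\ref{cor:gerbeyRRforcurves} combined with the strong simplicity criterion (Proposition~\ref{prop:stackystronglysimple}) forces this object to be concentrated in a single cohomological degree and to be a (shifted) skyscraper at a unique closed point $\mathbf{x} \in \Cc$. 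Extending this pointwise correspondence to families then upgrades to the isomorphism of moduli stacks $\Cc \cong \M$, where $\M$ is the claimed moduli stack.

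The main obstacle will be the careful bookkeeping of $\G_m$-weights and numerical invariants on the gerbey side: in particular, verifying that GRR together with the stacky-to-coarse-space comparison really produces ``rank $d$'' and ``degree $1-d$'' requires understanding how weight $1$ line bundles on the gerbe $\Cc'$ relate, via the equivalence $\Coh^1(\Cc') \cong \Coh(C',\alpha')$ of Proposition~\ref{prop:equivofcats}, to higher-rank twisted sheaves on the coarse space $C'$. This step is essentially the gerbey refinement of Atiyah's theorem (Theorem~\ref{thm:atiyah}) that underlies the moduli-theoretic identification.
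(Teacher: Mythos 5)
Your Step 1 (identifying $\L^\vee$, up to shift, as the kernel of the inverse via Lemma~\ref{lem:adjoints} and the triviality of the canonical sheaves) and your Step 3 (testing universality by transporting an arbitrary rank $d$ degree $1-d$ bundle back to a shifted skyscraper) are both reasonable, and Step 3 even addresses a point the paper leaves implicit. The problem is Step 2, which is where the actual content of the theorem lives, and as written it cannot work. The kernel $\L$ is a line bundle on $\Cc\times\Cc'$, so $\L^\vee$ is again a line bundle, and the (derived) restriction $\L^\vee|_{\{\mathbf{x}\}\times\Cc'}$ is a line bundle on $\Cc'$: it has rank $1$ for every $\mathbf{x}$, never rank $d$. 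No Chern-character bookkeeping on $\Cc\times\Cc'$ can make a fiberwise restriction of a line bundle have rank $d>1$; moreover Grothendieck--Riemann--Roch is not needed to restrict to a fiber at all --- it is needed for a pushforward. So your proposal never produces the rank-$d$ object the theorem is about.

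The missing mechanism is cohomology along the fibers of the \emph{other} projection. In the paper's proof the rank-$d$, degree-$(d-1)$ bundle is $R\pi_{\Cc'*}\L=\pi_{\Cc'*}\L$ (the higher direct image vanishes because $d>0$, cf.\ Lemma~\ref{lem:simplifiedgrrforcurves}): its rank is the fiberwise $h^0(\Cc,\L|_{\Cc\times\{y\}})=d$ supplied by gerbey Riemann--Roch (Corollary~\ref{cor:gerbeyRRforcurves}), i.e.\ the rank $d$ arises as the dimension of the space of sections of a degree-$d$ line bundle on a genus-$1$ curve, not as the rank of $\L^\vee$. The degree is then extracted from GRR via the explicit divisor class $c_1(\L)=\Delta+(d-1)[\Cc]$, with $\Delta$ the graph of $p\mapsto d\cdot p$, giving $c_1(\L)^2=2d-2$ and hence degree $d-1$; dualizing yields rank $d$ and degree $1-d$. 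To repair your argument you would need to replace ``restriction of $\L^\vee$ to $\{\mathbf{x}\}\times\Cc'$'' throughout by the appropriate relative pushforward (equivalently, by images under the transform in the correct direction), and your Step 3 would then have to be run against that family rather than against $\L^\vee$ itself.
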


\begin{proof}

Let $\pi: \Cc \times \Cc' \rightarrow \Cc'$ be the projection map to the second component. We begin by calculating the rank and degree of $\pi_{*} \L$ on $\Cc'$. By gerbey Grothendieck-Riemann-Roch (Theorem~\ref{thm:GRR}), we may compute the rank and degree using the commutative diagram given by GRR. %Proposition~\ref{prop:sisflat} and the fact that rank and degree of vector bundles are preserved under flat base change, we can run the intersection theory computation for these invariants on the coarse spaces $C$ and $C'$, rather than working directly on the gerbe.

We have assumed that $d>0$.  By Riemann-Roch on $\Cc'$, $\pi_*\L$ on $\Cc'$ has $d$ many sections.  To see this, consider some point $x\in \Cc'$. The restriction of $\L$ to $\Cc\times \{x\}$ is a line bundle of degree $d$ on $\Cc$, so by Riemann-Roch for gerbey curves (Corollary \ref{cor:gerbeyRRforcurves}), we have that 
$$
h^0(\Cc, \L|_{\Cc \times \{ x\}} ) = d.
$$
% with divisor $D$, so by Riemann-Roch for gerbes (which follows as a corollary of Grothendieck-Riemann-Roch for gerbes)
% \[
% \ell(D) +\ell(K-D)=\deg D-g+1=\deg D.
% \]
%\textcolor{red}{Can we use a RR for stacks directly? Now that we don't have the statement of 6.3, this is what I'm worried about: $x$ corresponds to a line bundle on $\Cc$. But this doesn't correspond to a line bundle on $C$, and so I don't know how to associate a divisor to it.}
%We have assumed that $d>0$.  Then $K-D$ is a line bundle of degree $-d$, and therefore it has no global sections.  This means that $\ell(D)=\deg D=d$, 
This gives us the number of sections of the pushforward $\pi_{*}\L.$ %\textcolor{red}{I just moved all of the stuff that we had here to its own corollary.}

We will use Grothendieck-Riemann-Roch to show that the degree of the pushforward is $d-1$. By Lemma \ref{lem:simplifiedgrrforcurves}, it suffices to compute the Chern character of $\L$ itself. Further, since we are only interested in the degree of $\pi_{*}\L,$ we will only calculate the top part of the Chern character, $c_1(\L)^2/2$. Consider the Weil divisor $D$ corresponding to $\L$, i.e. the codimension 1 substack of $\Cc \times \Cc'$ cut out by a section of $\L$. In what follows, by abuse of notation, we write the Weil divisors on $\Cc \times \Cc'$ corresponding to $\Cc$ and $\Cc'$ as $\Cc$ and $\Cc'$ as well. %\textcolor{red}{Is the class $[\Cc]$ independent of the choice of $x$ when we think of it as $\Cc \times \{ x\} \subset \Cc \times \Cc'$? $\Cc'$ is a genus 1 curve, so moving fibers is hard? Oh wait, the fibers are algebraically equivalent, and for $\CH^1$, those two are the same? Phew. Momentary heart attack.}

%(\textcolor{red}{This paragraph may not work any more}) The degree of a line bundle is preserved by base change.  Therefore we can compute the degree of $\pi_*\L$ on the coarse space, and that value will give its degree on the gerbe.  The rigidified $\L$ is a line bundle on a surface, so its chern character is $1+c_1+\frac{c_1^2}{2}$.  The top degree part is $\frac{c_1^2}{2}$, so it suffices to compute this value (which is an integer).  

%In order to compute the self-intersection $c_1^2$, we want to turn $c_1$ into a divisor on $\Cc \times \Cc'$, rather than a line bundle.  
Consider the morphism $\Cc \to \Cc'$ given by sending a point $p$ in $\Cc$ to $d\cdot p$ in $\Pic^d(\Cc)$.  Let $\Delta$ denote the graph of this morphism.  Then $c_1(D)$ is equal to $\Delta+(d-1)\Cc$, where $\Cc$ denotes the fiber class of $\Cc$ over $\Cc'$.  See Figure 1 for a diagram of this divisor, where the diagonal line represents $\Delta$ and the horizontal lines represent the $d-1$ copies of $\Cc$.  Each copy of $\Cc$ intersects $\Delta$ in 1 point, and the copies of $\Cc$ all intersect each other in zero points.  Therefore $c_1^2=(\Delta+(d-1)\Cc).(\Delta+(d-1)\Cc)=2d-2$.  This means that the degree is $d-1$, so $\pi_*(\L)$ has rank $d$ and degree $d-1$.  Therefore the dual of $\L$, which we denote $V$, induces an equivalence $\Phi^V:D(\Cc') \to D(\Cc)$, which inverts the derived equivalence $D(\Cc) \to D(\Cc')$.  $V$ has rank $d$ and degree $1-d$ over $\Cc'$, which implies that $\Cc$ is the moduli space of weight $1$ rank $d$ degree $1-d$ vector bundles on $\Cc'$ and $V$ is the universal bundle for this moduli problem.

\end{proof}

When $d<0$, note that the dual of the universal line bundle produces a derived equivalence in this case, that is, $\Phi^{\L^\vee}:D(\Cc) \to D(\Cc')$ is an equivalence.  Applying the same argument as above shows that $\Cc$ is the moduli space of weight $1$ rank $d$ degree $d+1$ vector bundles on $\Cc'$.

\begin{figure}[htb]
\begin{center}
\includegraphics[height=2in,width=2in,angle=0]{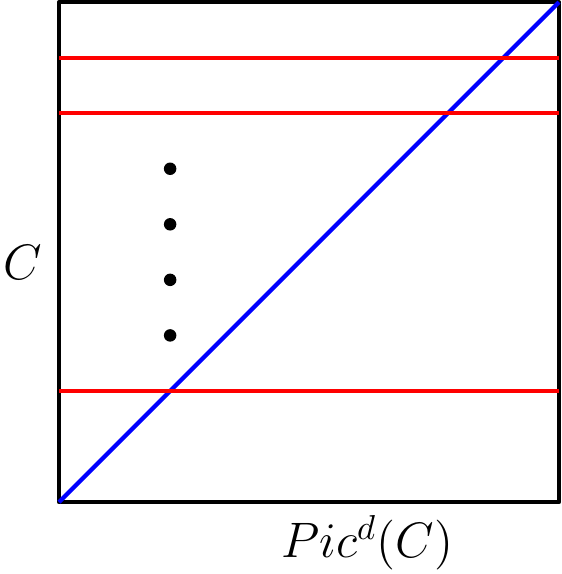}
\caption{Divisor of the universal degree d line bundle.}
\end{center}
\end{figure}

\paragraph{A geometric perspective on the kernel of the equivalence.} Since the rank and degree are coprime, the coarse moduli space of rank $d$ degree $1-d$ sheaves is geometrically isomorphic to the moduli space of degree $1-d$ line bundles, as the determinant map is geometrically an isomorphism by Theorem~\ref{thm:atiyah}. This means that $\Cc$ can alternatively be considered as being geometrically the moduli space of degree $1-d$ line bundles on $\Cc'$. There are two distinct derived equivalences from $\Cc'$ to $\Cc$ which arise naturally from viewing $\Cc'$ as a moduli space of vector bundles of specified rank and degree on $\Cc$.  The first has kernel $V$, which is the universal vector bundle of rank $d$ and degree $1-d$.  The second has kernel the universal degree $1-d$ line bundle, call it $\L$, obtained via the determinant map. Orlov's representability theorem in ~\cite[Thm 2.2]{orlov} states that any fully faithful exact functor between derived categories is an integral functor and that any such functor corresponds to a unique kernel.  Therefore the integral functors corresponding to these different kernels must give different derived equivalences, which implies that they should differ by pre- or post-composing with a derived autoequivalence of $\Cc$ or $\Cc'$.  The resulting autoequivalence here admits a natural description: let $\L$ be the universal line bundle on $\Cc \times \Cc'$, $V$ be the universal rank $d$ vector bundle on $\Cc \times \Cc'$, and $\det:J_{\Cc'}(d,1-d) \to J_{\Cc'}(1,d-1)$ the determinant map as in Theorem~\ref{thm:atiyah}.  Then $\Phi^\L=\det \circ \Phi^V$, i.e., we get the integral transform with kernel $\L$ by composing the integral transform with kernel $V$ with the determinant map, which is an isomorphism on $C$ and therefore induces a derived autoequivalence of $C$.  Since $\gm$ acts by weight $1$ on the universal rank $d$ degree $1-d$ vector bundle, it acts with weight $d$ on the determinant of $V$, which means that $\Cc'=\Pic_d^{1-d}(\Cc)$.  

Next, we show how to compose these equivalences in an explicit moduli-theoretic way.  That is, if $\Cc'=\Pic^d(C)$, and $\Cc''=\Pic^f(C')$, then what is the rank and degree of the convolution of the universal degree $d$ and $f$ line bundles after pushing forward to $C''$?

\begin{thm}%\label{thm:compositiondim1}
If $\Cc$ is a gerbey genus 1 curve, $\Cc':=\Pic^d_1(\Cc)$ and $\Cc'':=\Pic^f_1(\Cc')$, then $\Cc''=\Pic^{df-1}_2(\Cc)$.
\end{thm}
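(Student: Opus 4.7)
The plan is to realize the composite $\Phi^{\L_2}\circ\Phi^{\L_1}:D(\Cc)\to D(\Cc'')$ as a single integral transform with an explicitly computable kernel, and then read off the moduli interpretation of $\Cc''$ from that kernel. By Proposition~\ref{prop:convolution} the composite equals $\Phi^{\K}$ with $\K := \L_1 * \L_2 \in D(\Cc \times \Cc'')$, and this is an equivalence since both $\Phi^{\L_1}$ and $\Phi^{\L_2}$ are (Theorem~\ref{thm:gerbeygenus1}). Thus $\Cc''$ is a fine moduli space for the family of objects on $\Cc$ given by the fibers of $\K$ over $\Cc''$, and it remains only to identify the rank, degree, and weight of that family.

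Unwinding the convolution, $\K = R\pi_{13*}\bigl(\pi_{12}^*\L_1 \otimes \pi_{23}^*\L_2\bigr)$ on the threefold $\Cc \times \Cc' \times \Cc''$. By flat base change for these algebraic stacks, the fiber of $\K$ at $(\mathbf{x},\mathbf{y}) \in \Cc \times \Cc''$ is $R\Gamma(\Cc', M_{\mathbf{x},\mathbf{y}})$, where $M_{\mathbf{x},\mathbf{y}} = \L_1|_{\{\mathbf{x}\}\times\Cc'} \otimes \L_2|_{\Cc'\times\{\mathbf{y}\}}$ is a line bundle on the gerbey genus $1$ curve $\Cc'$ of degree $f$ (under the Poincar\'e-type normalization making $\L_1|_{\{\mathbf{x}\}\times\Cc'}$ of degree zero). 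For $f>0$, gerbey Riemann--Roch (Corollary~\ref{cor:gerbeyRRforcurves}) gives $h^0(\Cc',M_{\mathbf{x},\mathbf{y}})=f$ and $h^1=0$, so $\K$ is a rank-$f$ locally free sheaf on $\Cc \times \Cc''$, and $\Cc''$ is identified as the moduli space of certain rank-$f$ vector bundles on $\Cc$.

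To pin down the degree and weight, apply gerbey Grothendieck--Riemann--Roch (Theorem~\ref{thm:GRR}) together with Lemma~\ref{lem:toddtrivial}: $\ch(\K) = \pi_{13*}\bigl(\pi_{12}^*\ch(\L_1) \cdot \pi_{23}^*\ch(\L_2)\bigr)$. Expanding each $c_1(\L_i)$ as the graph of a multiplication-by-$d_i$ map plus a multiple of a fiber class, as in the proof of Theorem~\ref{thm:explicitdim1}, and carrying out the resulting intersection on the threefold, I expect to find that for each $\mathbf{y}\in\Cc''$ the restriction of $\K$ to $\Cc\times\{\mathbf{y}\}$ has degree $df-1$. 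Simultaneously tracking the $\gm$-actions through pullback, tensor, and pushforward should identify the natural weight of the associated determinant line bundle as $2$, reflecting that two weight-$1$ kernels have been composed. Since $\gcd(f,df-1)=1$, Atiyah's theorem (Theorem~\ref{thm:atiyah}) then says that the determinant map identifies the moduli of such rank-$f$ degree-$(df-1)$ bundles on $\Cc$ with the moduli of weight-$2$ degree-$(df-1)$ line bundles, namely $\Pic^{df-1}_2(\Cc)$, giving the desired isomorphism $\Cc'' \cong \Pic^{df-1}_2(\Cc)$.

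The main technical obstacle is the weight bookkeeping: while rank, degree, and the Atiyah identification are close analogues of the calculations from Theorem~\ref{thm:explicitdim1}, correctly tracking the bi-weights of $\L_1$ on $\Cc\times\Cc'$ and of $\L_2$ on $\Cc'\times\Cc''$ through the convolution---and seeing that the composite kernel corresponds naturally to weight-$2$ rather than weight-$1$ line bundles on $\Cc$---requires careful attention to the conventions of Section~\ref{subsec:2.3} and the way weights interact with the derived pushforward along the middle gerbey factor.
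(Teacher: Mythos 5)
Your overall architecture matches the paper's: form the convolution kernel $\K=\L_1*\L_2$ via Proposition~\ref{prop:convolution}, compute its rank, degree, and weight by Grothendieck--Riemann--Roch and intersection theory on the threefold $\Cc\times\Cc'\times\Cc''$, and identify $\Cc''$ with a Picard stack via the determinant map and Theorem~\ref{thm:atiyah}. The problem is that two of the three invariants --- the degree $df-1$ and the weight $2$ --- \emph{are} the content of the theorem, and you do not compute them: ``I expect to find'' and ``should identify'' stand in for the triple self-intersection $c_1^3/6$ of the divisor $(\Delta_{01}+(d-1)\Cc)\times\Cc''+\Cc\times(\Delta_{12}+(f-1)\Cc')$, whose cross terms are exactly where the $-1$ in $df-1$ comes from, and for the bi-weight bookkeeping through $R\pi_{13*}$ that you yourself flag as the main obstacle. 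The paper's proof consists of essentially nothing but these two computations, so as written your proposal is a correct plan rather than a proof.

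The one computation you do carry out, the rank, conflicts with the paper's ($f$ versus $df$), and the source of the conflict is a genuine gap rather than a matter of taste. Your base-change description of the fibers of $\K$ is a good idea, but it relies on normalizing $\L_1$ so that $\L_1|_{\{\mathbf{x}\}\times\Cc'}$ has degree zero, whereas the divisor class $\Delta_{01}+(d-1)\Cc$ that you propose to feed into the degree computation restricts to $\{\mathbf{x}\}\times\Cc'$ with strictly positive degree. These describe two different line bundles: a universal bundle is only determined up to tensoring by a pullback from the moduli factor, and changing that twist changes the composite functor (by composition with a line-bundle twist on $\Cc'$) and hence changes the rank and degree of $\K$. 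So you cannot use the Poincar\'e-type normalization for the rank and the paper's divisor class for the degree. You need either to fix one normalization of $\L_1$ and $\L_2$ and use it consistently in both computations, or to prove that the final identification $\Cc''\cong\Pic^{df-1}_2(\Cc)$ is independent of the choice; neither is done, and until it is, the coprimality input to Atiyah's theorem (you use $\gcd(f,df-1)=1$, the paper needs $\gcd(df,df-1)=1$) is not pinned down either.
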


\begin{proof}

%Because degree is preserved by flat base change, it suffices to do the computation on the coarse spaces $C$, $C'$ and $C''$.  
The goal here is to compute the rank, degree, and weight of the convolution of the universal degree $d$ and degree $f$ line bundles.  This can be done by running an intersection theory computation on the gerbey threefold $\Cc \times \Cc' \times \Cc''$.  If $\L$ is the universal degree $d$ line bundle on $\Cc \times \Cc'$, and $\L'$ is the universal degree $f$ line bundle on $\Cc' \times \Cc''$, the kernel of the composition is $\pi_{02*}(\pi_{01}^*\L \otimes \pi_{12}^*\L')$.  

We use Grothendieck-Riemann-Roch as before.  The tangent bundles of everything in sight are trivial, so the Todd classes of the tangent bundles vanish.  Therefore we want to compute $c_1^3/6$, where $c_1$ is the divisor of $\pi_{01}^*\L \otimes \pi_{12}^*\L'$.

Since we understand the divisors on $\Cc\times \Cc'$ and on $\Cc' \times \Cc''$, we can understand them on the threefold.  $\L$ pulls back to $(\Delta_{01}+(d-1)\Cc)\times \Cc''$ and $\L'$ pulls back to $(\Delta_{12}+(f-1)\Cc') \times \Cc'$.  Therefore the divisor of $\pi_{01}^*\L \otimes \pi_{12}^*\L'$ is $(\Delta_{01}+(d-1)\Cc)\times \Cc''+(\Delta_{12}+(f-1)\Cc') \times \Cc'$.  Computing the third power of this divisor shows that the pushforward of this kernel to $\Cc''$ has rank $df$ and degree $df-1$.  The weight of the $\gm$-action is the sum of the weights of the $\gm$-actions on the universal degree $d$ and degree $f$ line bundles, which is $2$.

\end{proof}

As a sanity check, note that this means that if we take degree $d$ line bundles on $\Cc$, and then degree 1 line bundles on $\Pic^d(\Cc)$ (whose universal line bundle induces the identity map on $D(\Pic^d(\Cc))$, the convolution of the kernels gives a sheaf of rank $d$ and degree $d-1$ on $\Cc''$, which is what we expect from the first computation.

\begin{rem} 
We could also apply these techniques to study the derived categories of gerbey K3s, or gerbey moduli spaces of higher dimension.  In particular, if $X$ is a K3 and $\M$ is a 1-dimensional component of the moduli stack of semistable sheaves such that $\M$ does not parameterize any strictly semistable sheaves, then $D(X \times \bgm) \cong D(\M)$.  
\end{rem}


\begin{thebibliography}{}

\bibitem{AAFH}
    Addington, N., Antieau, B., Frei, S., and Honigs, K. Rational points and derived equivalences.  Available at \url{https://arxiv.org/pdf/1906.02261.pdf}.
    
\bibitem{goodmodulispace}
    Alper, J. Good moduli spaces for Artin stacks. Annales de l'Institut Fourier, Volume 63 (2013) no. 6, pp. 2349-2402. doi : 10.5802/aif.2833. %http://www.numdam.org/item/AIF_2013__63_6_2349_0/

\bibitem{AKW}
    Antieau, B., Krashen, D., and Ward, M. Derived categories of torsors for abelian schemes, 2014. eprint 1409.2580

\bibitem{atiyah}
    Atiyah, M., Vector bundles over an elliptic curve.  Proc. London Math. Soc. 1957, 414-452.
    
\bibitem{bondalorlov}
    Bondal, A., Orlov, D. Reconstruction of a variety from the derived category and groups of autoequivalences. Compositio Mathematica 125, 327–344 (2001).
    
\bibitem{bondalorlov2}
    Bondal, A., Orlov, D. Semi orthogonal decomposition for algebraic varieties.  MPIM preprint 95/15 (1995), also arXiv:math.AG/9506012.
    
\bibitem{bridgeland}
    Bridgeland, T., Equivalences of Triangulated Categories and Fourier–Mukai Transforms, Bulletin of the London Mathematical Society, Volume 31, Issue 1, January 1999, Pages 25–34, https://doi.org/10.1112/S0024609398004998
     
\bibitem{andrei}
    C\u ald\u araru, A., Derived categories of twisted sheaves on Calabi-Yau manifolds.  Ph.D. thesis, Cornell, 2000.
    
\bibitem{deJong}
    de Jong, J., A result of Gabber.  Available at \url{https://www.math.columbia.edu/~dejong/papers/2-gabber.pdf}.
    
\bibitem{ehkv}
    Edidin, D., Hassett, B., Kresch, A., Vistoli, A.  Brauer groups and quotient stacks.  American Journal of Mathematics, Vol. 123, pp. 761-777.  2001.
    
\bibitem{edidingraham}
    Edidin, D., Graham, W. Equivariant intersection theory (With an Appendix by Angelo Vistoli: The Chow ring of M2). Invent math 131, 595–634 (1998). https://doi.org/10.1007/s002220050214
    
\bibitem{EGRR}
    Edidin, D., Graham, W.  Riemann-Roch for equivariant Chow groups.  Duke Mathematical Journal, 102 (3), pp. 567-594.  2001. DOI: 10.1215/S0012-7094-00-10239-6

    
\bibitem{ega4}
    Grothendieck, A. \'Elements de g\'eometrie alg\'ebrique. IV.  \'Etude locale des sch\'emas et des morphismes de sch\'emas. II (French), Inst. Hautes \'Etudes Sci. Publ. Math. 24 (1965).
    
\bibitem{hall}
    Hall, J. Cohomology and base change for algebraic stacks. Math. Z. 278, 401–429 (2014). https://doi.org/10.1007/s00209-014-1321-7
    
\bibitem{hallrydh}
    Hall, J., Rydh, D. (2017). Perfect complexes on algebraic stacks. Compositio Mathematica, 153(11), 2318-2367. doi:10.1112/S0010437X17007394
    
\bibitem{dannymax}
    Krashen, D., Lieblich, M. Index reduction for Brauer classes via stable sheaves.  Int. Math. Research Notices, 2008.
    
%\bibitem{survey}
 %   Sawon, J., Moduli spaces of sheaves on K3 surfaces.  Journal of Geometry and Physics, Vol. 109. 2016.
    
\bibitem{hyperkahler}
    Mukai, S., Symplectic structure of the moduli space of sheaves on an abelian or K3 surface, Invent. Math 1987.  Vol. 77, 101-116.
    
\bibitem{max}
    Lieblich, Max. Moduli of twisted sheaves and generalized Azumaya algebras. Thesis (Ph. D.)--Massachusetts Institute of Technology, Dept. of Mathematics, 2004.
    
\bibitem{maxrr}
    Lieblich, M. Moduli of twisted sheaves.  Duke Math. J., Vol. 138, Number 1 (2007), 23-118.
    
\bibitem{orlov}
    Orlov, D.O. Equivalences of derived categories and K3 surfaces. J. Math. Sci. (New York), 84 (1997), pp. 1361-1381.
    
\bibitem{olsson}
    Olsson, M. Algebraic spaces and stacks.  American Mathematical Society colloquium publications.  2016.
    
\bibitem{greenbook}
    Bartocci, C., Bruzzo, U., Hernandez Ruiperez, D. Fourier-Mukai and Nahm transforms in geometry and mathematical physics.  Progress in Mathematics, 276.  2008.
    
\bibitem{stacks}
    Stacks project.
    
\bibitem{toen}
    Toen, B. Riemann-Roch theorems for Deligne-Mumford stacks.  \url{https://arxiv.org/abs/math/9803076}, 1998.

\end{thebibliography}
\end{document}